\newcommand{\be}{\begin{equation}}
\newcommand{\ee}{\end{equation}}
\newcommand{\ba}{\begin{array}}
\newcommand{\ea}{\end{array}}
\newcommand{\bea}{\begin{eqnarray}}
\newcommand{\eea}{\end{eqnarray}}
\newcommand{\bee}{\begin{eqnarray*}}
\newcommand{\eee}{\end{eqnarray*}}
\newtheorem{Prop}{Proposition}
\newtheorem{Rk}{Remark}
\renewcommand\appendix{\bigskip {\noindent\Large \bf Appendix}\par
  \setcounter{section}{0}%
  \setcounter{subsection}{0}%
  \renewcommand\thesection{\@Alph\c@section}}
\newcommand{\beeq}{\begin{equation}}
\newcommand{\eneq}{\end{equation}}
\newcommand{\bear}{\begin{eqnarray}}
\newcommand{\eear}{\end{eqnarray}}
\newcommand{\beq}{\begin{equation}}
\newcommand{\eeq}{\end{equation}}
\newcommand{\R}{{\mathbb R}}
\newcommand{\C}{{\mathbb C}}
\newcommand{\calg}{\,{\mathfrak g}}
\newcommand{\calH}{{\mathcal H}}
\newcommand{\calN}{{\mathcal N}}
\newcommand{\la}{\langle}
\newcommand{\ra}{\rangle}
\def\pr{\partial}
\def\calge1{\calg_{\vec{e_1}}}
\def\bm{\left( \begin{array}{cc}}
\def\endm{\end{array}\right)}
\def\e{\epsilon}
\def\norm[#1][#2]{\Vert #1 \Vert_{#2}}
\def\Util3{\tilde{U}^{(3)}}
\numberwithin{equation}{section}
\newcommand{\DD}{\Delta}
\renewcommand{\to}{\rightarrow}
\newcommand{\RR}{\mathbb{R}}
\newcommand{\Honer}{H^1_{\rm r}(\RR^4)}
\newcommand{\Qt}{Q^{(t)}}
\newcommand{\Qinf}{Q^{(\infty)}}
\newcommand{\Vt}{V^{(t)}}
\newcommand{\Vinf}{V^{(\infty)}}
\newcommand{\Wt}{W^{(t)}}
\newcommand{\Ht}{\calH^{(t)}}
\newcommand{\Hinf}{\calH^{(\infty)}}
\newcommand{\Pt}{P^{(t)}}
\newcommand{\Pinf}{P^{(\infty)}}
\newcommand{\Prt}{P^{(t)}_{\rm r}}
\newcommand{\Prinf}{P^{(\infty)}_{\rm r}}
\newtheorem{theorem}{Theorem}
\newtheorem{lemma}{Lemma}
\newtheorem*{remark*}{Remark}
\newtheorem*{remarks*}{Remarks}
\newtheorem{prop}{Proposition}
\numberwithin{lemma}{section}
\numberwithin{prop}{section}
\begin{document}

\title[On Stability of Pseudo-Conformal Blowup for Hartree NLS]{On Stability of Pseudo-Conformal Blowup for $L^2$-critical Hartree NLS}

\date{July 8, 2008}

\author{Joachim Krieger} \address{University of Pennsylvania, Department of Mathematics, 4N67 Rittenhouse Lab, 209 South 33rd Street, Philadelphia, PA 19104, USA.} 
\email{krigerj@math.upenn.edu}

\author{Enno Lenzmann} 
\address{Massachusetts Institute of Technology, Department of Mathematics, Building 2, Office 230, 77 Massachusetts Avenue, Cambridge, MA 02139, USA.}
\email{lenzmann@math.mit.edu}

\author{Pierre Rapha\"el}
\address{Universit\'e Paul Sabatier Toulouse, Institut de Mathematiques, 31062 Toulouse Cedex 9, France.}
\email{pierre.raphael@math.univ-tlse.fr}

\maketitle

\begin{abstract}
We consider $L^2$-critical focusing nonlinear Schr\"odinger equations with Hartree type nonlinearity  
$$i \pr_t u = -\DD u - \big ( \Phi \ast |u|^2 \big ) u \quad \mbox{in $\RR^4$},$$
where $\Phi(x)$ is a perturbation of the convolution kernel $|x|^{-2}$. Despite the lack of pseudo-conformal invariance for this equation, we prove the existence of critical mass finite-time blowup solutions $u(t,x)$ that exhibit the pseudo-conformal blowup rate
\[
\| \nabla u(t) \|_{L^2_x} \sim \frac{1}{|t|} \quad \mbox{as} \quad t \nearrow 0 .
\] 
Furthermore, we prove the finite-codimensional stability of this conformal blow up, by extending the nonlinear wave operator construction by Bourgain and Wang (see \cite{Bourgain+Wang1997}) to $L^2$-critical Hartree NLS.
\end{abstract}


\section{Introduction}



\subsection{Setting of the problem}


Nonlinear Schr\"odinger equations (NLS) with Hartree type nonlinearity
$$\ \  \left   \{ \begin{array}{lll}
         i \pr_t u + \Delta u+(\Phi\ast |u|^2)u=0,\\[1ex]
         (t,x)\in [0,T)\times \R^d, \  \ u(0,x)=u_0(x), \ \ u_0:\R^d\to \C,
         \end{array}
\right .
$$
arise naturally as effective evolution equations in the mean-field limit of many-body quantum systems; see, e.\,g., \cite{Froehlich+Lenzmann2005} for a general overview. An essential feature of Hartree NLS is that the convolution kernel $\Phi(x)$ still retains the fine structure of microscopic two-body interactions of the quantum system. By contrast, NLS with local nonlinearities (e.\,g.~the Gross-Pitaevski equation) arise in further limiting regimes where two-body interactions are (more coarsely) modeled by a single real parameter in terms of the scattering length. In particular, NLS with local nonlinearities cannot provide effective models for quantum systems with {\em long-range} interactions such as the physically important case of the Coulomb potential $\Phi(x) \sim |x|^{-1}$ in $d=3$, whose scattering length is infinite. Moreover, such slowly decaying convolution kernels lead to long-range effects in blowup and scattering problems for Hartree NLS, which cannot be addressed by merely adapting techniques developed for local NLS. The present paper is intended to serve as a starting point for the blowup analysis of Hartree NLS.

As mentioned above, the convolution kernel $\Phi(x)=\frac{1}{|x|}$ in dimension $d=3$ represents Coulomb interactions and it is therefore of considerable physical relevance. Recently, the so-called {\em pseudo-relativistic Hartree equation}
\be
\label{relhartree}
 i \pr_t u - \sqrt{-\Delta +m^2} \, u+(\frac{1}{|x|}\ast |u|^2)u=0
 \ee
has been introduced as a relativistic correction to the classical model for the evolution of boson stars; see \cite{Elgart+Schlein2007}. Moreover, Fr\"ohlich and Lenzmann \cite{Froehlich+Lenzmann2007} have proven the existence of finite time blow up solutions for this problem in connection with the Chandrasekhar theory of gravitational collapse. Their proof, however, is based on a viriel type argument and provides no insight into the description of the singularity formation. It would be of considerable interest to extend the analysis of singularity formation for solutions to (\ref{relhartree}). However, from the mathematical point of view, this evolution equation is an $L^2$-critical blowup problem with both nonlocal dispersion and nonlocal nonlinearity, which makes its rigorous study a delicate problem.\\ 

In this paper, we propose a preliminary investigation of the singularity formation for a problem of a similar $L^2$-critical type but with more symmetries: The four dimensional $L^2$-critical Hartree NLS
\be
\label{eq:hartree} \tag{NLS$_\Phi$}
 \left   \{ \begin{array}{lll}
        i \pr_t u + \Delta u+(\Phi\ast |u|^2)u=0, \ \ \Phi(x)\sim\frac{1}{|x|^2},\\[1ex]
         (t,x)\in [0,T)\times \R^4, \  \ u(0,x)=u_0(x), \ \ u_0:\R^4\to \C.
         \end{array}
\right .
\ee
Our aim is to derive some qualitative information on possible blowup regimes. Note that the formal proximity between the relativistic three dimensional problem and its classical four dimensional version was already central in the analysis of a related problem for the gravitational Vlasov equation in astrophysics; see \cite{LMR2007}.\\ 

Let us recall the main know facts about (\ref{eq:hartree}) for the Newtonian potential $\Phi(x)=\frac{1}{|x|^2}$ in $d=4$ dimensions. The Cauchy problem is well-posed and subcritical in $H^1(\R^4)$; see, e.\,g., \cite{Ginibre+Velo1980,Cazenave2003}. Thus, for any initial datum $u_0\in H^1(\R^4)$, there exists $0<T\leq +\infty$ such that $u(t)\in C_t^0 H^1_x([0,T) \times \R^4)$, and we have either $T=+\infty$ and the solution is global, or $T<+\infty$ and then $\lim_{t\uparrow T}|\nabla u(t)|_{L^2}=+\infty$, i.\,e., the solution blows up in finite time. Furthermore, the following quantities are conserved by the $H^1$-flow:
$$         \mbox{$L^2$-norm}: \ \ \int|u(t,x)|^2=\int|u_0(x)|^2,$$
        $$ \mbox{Energy}:\ \ E(u(t,x))=\frac{1}{2}\int|\nabla u(t,x)|^2-\frac{1}{4}\int |u(t,x)|^2(\frac{1}{|x|^2}\ast |u(t,x)|^2) =E(u_0).$$
  The existence of finite time blowup solutions follows from the classical virial identity 
  $$\frac{d^2}{dt^2}\int |x|^2| |u(t,x)|^2 =16E(u_0), $$ 
  which implies finite time blow up for initial data $u_0 \in \Sigma = H^1(\RR^4) \cap L^2(\RR^4,|x|^2 \,dx)$ with negative energy $E(u_0)<0$.\\
  On the other hand, by following Weinstein \cite{Weinstein1983}, we can derive a sharp global well-posedness criterion: For any $u_0\in H^1(\R^4)$ with $\|u_0\|_{L^2_x}<\|Q\|_{L^2_x}$, the solution is global and bounded in $H^1$. Here $Q$ is the unique radially symmetric positive solution to 
\be
\label{eqsoliton}
\Delta Q +(\frac{1}{|x|^2}\ast|Q|^2)Q= Q, \ \ Q(r)>0, \ \ Q\in H^1(\R^4).
\ee
Note that the existence and uniqueness of the ground state has been proved by Lieb \cite{Lieb1977} in dimension $d=3$, but the proof can be adapted to dimension $d=4$; see Section \ref{sec:spectral} for more details. The variational characterization of $Q$ then implies the sharp interpolation estimate: 
$$\forall u\in H^1(\RR^4), \ \ E(u)\geq \frac{1}{2} \| \nabla u \|_{L^2_x}^2\left(1-\frac{\|u\|_{L^2_x}^2}{\|Q\|^2_{L^2_x}}\right),$$ 
whence the global well-posedness of $H^1$ data with $\|u_0\|_{L^2_x}<\|Q\|_{L^2_x}.$ Moreover, the sharpness of this criterion follows from the existence of the pseudo-conformal symmetry: If $u(t,x)$ solves (\ref{eq:hartree}) with $\Phi(x)=\frac{1}{|x|^2}$, then so does: $$u(t,x)=\frac{1}{t^2}\overline{u}(\frac{1}{t}, \frac{x}{t})e^{i\frac{|x|^2}{4t}}.$$ By applying this transformation to the solitary wave $u(t,x)=Q(x)e^{it}$ and using the time reflection symmetry, we obtain the critical mass blow up solution
\be
\label{st}
S(t,x)=\frac{1}{t^2}Q(\frac{x}{t})e^{\frac{i}{t}-i\frac{|x|^2}{4t}},
\ee 
which blows up according to $\| \nabla S(t) \|_{L^2_x} \sim |t|^{-1}$ as $t \nearrow 0$.
This structure is, of course, reminiscent to the one for NLS with $L^2$-critical local nonlinearity
\be
\label{NLS}
i\partial_tu+\Delta u+u|u|^{\frac{4}{d}}=0, \ \ \ (t,x)\in \R\times\R^d,
\ee 
which possess a conformal invariance and an explicit critical mass blow up solution. 


\subsection{Statement of Main Results}


The first question we ask is the persistence of the critical mass blowup solution under a deformation of the convolution kernel, which destroys the conformal invariance. Note that the question of the existence of a critical blowup element is not well understood even for local nonlinearities. In fact, it can be proven that such elements do not exist in some situations; see Martel, Merle \cite{Martel+Merle2002} for the critical KdV problem, and Merle \cite{Merle1996} for non-existence results for anisotropic nonlinearities. On the other hand, Burq-Gerard-Tzvetkov \cite{BGT2003} have shown the persistence of the critical mass blowup solution for the local (NLS) 
on a domain with Dirichlet boundary condition. Here the pseudo-conformal transformation is destroyed, but only up to an exponentially small in time term. Our first claim is that critical blowup elements persist under a small enough {\it polynomial} deformation of the pseudo-conformal symmetry. The precise statement reads as follows.

\begin{theorem}{\bf (Existence of critical mass blow up solutions).}
 \label{th:main}
Consider \eqref{eq:hartree} with $\Phi$ of the form
\begin{equation*}
\Phi(|x|) = \frac{\phi(|x|^k)}{|x|^2}, 
\end{equation*}
for some $k >0$. Here we assume that $\phi : [0,\infty ) \to \RR$ is a differentiable function such that $\phi(0) = 1$ and  $|\phi(r)| + \langle r \rangle |\phi'(r)| \leq C$ for some constant $C > 0$.

Then, for $k > 0$ sufficiently large, there exists a solution $u \in C^0_t H^1_x((-T,0) \times \RR^4)$ of \eqref{eq:hartree} with some $T > 0$ such that
\[
\|u(t)\|_{L^2_x}=\|Q\|_{L^2_x}, \ \ \| \nabla u(t) \|_{L^2_x} \sim \frac{1}{|t|} \quad \mbox{as} \quad t \nearrow 0,
\]
where $Q \in H^1(\R^4)$ is the ground state solution to (\ref{eqsoliton}).
\end{theorem}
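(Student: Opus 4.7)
The natural approach is to exploit the pseudo-conformal (lens) transformation, which is exact when $\phi\equiv 1$, and to treat the perturbed equation as a Bourgain--Wang type construction in the spirit of \cite{Bourgain+Wang1997}. First I would pass to conformal variables by setting $s=-1/t$, $y=x/t$ and writing
$$u(t,x) \;=\; \frac{1}{t^2}\, v(s,y)\, e^{-i|x|^2/(4t)},$$
so that the blowup time $t\nearrow 0$ becomes $s\to+\infty$. For the unperturbed kernel $\Phi_0(x)=|x|^{-2}$, $v$ solves $i\partial_s v+\Delta v-v+(\Phi_0\ast|v|^2)v=0$, and the pseudo-conformal blowup $S$ corresponds exactly to the stationary wave $v_\infty(s,y)=Q(y)e^{is}$. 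For the perturbed kernel $\Phi(|x|)=|x|^{-2}\phi(|x|^k)$, the convolution $(\Phi\ast|u|^2)(t,x)$ becomes in the $(s,y)$ coordinates $(\Phi_0\ast|v|^2)(s,y)$ plus a correction whose kernel carries the factor $\phi(|t|^k|y-y'|^k)-1$. Because $\phi(0)=1$ and $\langle r\rangle\phi'(r)$ is bounded, this correction is of size $|t|^k=|s|^{-k}$ on any region of controlled size in $y$, yielding an equation
$$i\partial_s v+\Delta v-v+(\Phi_0\ast|v|^2)v \;=\; F(s,v),$$
with $F=\calO(s^{-k})$ for spatially localized $v$.

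Next I would look for a solution $v(s,y)=Q(y)e^{is}+w(s,y)$ with $w\to 0$ as $s\to+\infty$, and construct $w$ by an iteration started at $s=+\infty$. Substituting the ansatz gives
$$i\partial_s w+\Delta w-w+\mathcal{L}[w] \;=\; F(s,Qe^{is})+R(s,w),$$
where $\mathcal{L}$ is the linearization of the Hartree nonlinearity at the ground state wave and $R$ collects the quadratic and cubic nonlinear pieces. The scheme is to first build a formal approximate solution
$$w_{\rm app}(s,y) \;=\; \sum_{j=1}^{N} s^{-jk}\, W_j(s,y),$$
determining the profiles $W_j$ level by level via inhomogeneous equations $\mathcal{L} W_j=G_j$, in which $G_j$ is computed from the lower-order $W_i$ and from the expansion of $F$. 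For $N$ large, the residual source produced by $w_{\rm app}$ decays like $s^{-(N+1)k}$, and a contraction mapping in a suitable weighted Sobolev space on $[s_0,\infty)$ should produce the remainder $w-w_{\rm app}$, provided $k$ is chosen large enough that the tail decay beats the polynomial loss incurred at each inversion of $\mathcal{L}$ and in the energy estimate used to integrate the error backwards from $s=\infty$.

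The principal technical obstacle is the spectral analysis of $\mathcal{L}$ at the ground state $Q$. Because of the symmetries of \eqref{eq:hartree} (phase, translation, Galilean invariance, $L^2$-scaling, and in the unperturbed limit pseudo-conformal invariance), $\mathcal{L}$ carries a nontrivial generalized kernel whose elements must be handled explicitly. I would deal with them by inserting modulation parameters (a phase $\gamma(s)$, a translation, a dilation and a pseudo-conformal rotation at large $s$) chosen so as to annihilate the projection of the source onto $\ker\mathcal{L}$ at each step, and by invoking a coercivity estimate for $\mathcal{L}$ on the symplectic orthogonal complement of its generalized kernel -- a nonlocal analogue of Weinstein's functional inequality, which I expect to follow from the spectral analysis in Section~\ref{sec:spectral}. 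With these ingredients in place, undoing the lens transform returns a solution $u\in C^0_tH^1_x((-T,0)\times\RR^4)$ of \eqref{eq:hartree}; the convergence $v(s)\to Qe^{is}$ in $H^1$ combined with $L^2$-conservation forces $\|u(t)\|_{L^2}=\|Q\|_{L^2}$, and the exact scaling built into the ansatz yields $\|\nabla u(t)\|_{L^2}\sim 1/|t|$, as required.
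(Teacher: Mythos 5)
Your overall strategy coincides with the paper's: pass to conformal variables, observe that the kernel deformation becomes a small time-dependent perturbation, solve backwards from $s=\infty$ around $Qe^{is}$, absorb the generalized kernel of the linearization by modulation, use a Weinstein-type coercivity on the complement (whose key input, the nondegeneracy $\ker L_+=\{0\}$ in the radial sector, you correctly defer to Section~\ref{sec:spectral}), and take $k$ large to beat the polynomial degeneracy. The paper replaces your formal series $\sum_j s^{-jk}W_j$ by the exact slowly varying profile $\Qt$ solving the frozen-time elliptic equation (Proposition~\ref{thm:Q}); that difference is essentially cosmetic, since $\Qt$ is a resummation of such an expansion and both devices produce an exponentially localized correction with source $\calO(s^{-k-1})$.

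The genuine gap is in the assertion that the kernel correction yields a source of size $\calO(s^{-k})$ ``for spatially localized $v$'', which you then implicitly apply to the remainder $w-w_{\rm app}$ --- an object controlled only in $H^1$ and not localized. As the paper stresses at (\ref{estpourri}), the potential difference $\big(\frac{\phi(s^{-k}|\cdot|^k)-1}{|\cdot|^2}\ast|Q|^2\big)$ is only $\calO(s^{-2})$ in $L^\infty$ (for $|y|\gtrsim s$ the factor $\phi(\cdot)-1$ is merely bounded and only the $|y|^{-2}$ decay helps), so the linear term it contributes when acting on the remainder is $\calO(s^{-2})\,\|w-w_{\rm app}\|_{H^1}$, not $\calO(s^{-k})$. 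Fed through the fourfold integration of your modulation system --- the $s^{4}$ loss you yourself acknowledge --- this relative error loses a net factor $s^{2}$ at each pass, and the contraction does not close for any choice of $N$ and $k$ if sources are only tracked through their $H^1$ size. Two repairs exist: (i) the paper's choice, namely keep the perturbed potential inside the linearized operator $\Ht$ itself, establish its spectral structure by perturbation from $\Hinf$ (Lemma~\ref{lem:H_t}), propagate with the frozen-$t$ flow $e^{is\Ht}$ via its conserved coercive quadratic form, and pay for the freezing through $\|\partial_t\Ht\|_{H^1\to H^1}\lesssim t^{-3}$ (Lemma~\ref{lem:opbound}); or (ii) note that the dangerous $s^{4}$ loss only affects the root-mode projections of the source, and these are computed against exponentially localized dual modes, against which the potential difference does regain the full factor $s^{-k}$, while the non-root component tolerates the crude $\calO(s^{-2})$ bound since it is integrated only once. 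One of these observations must be added. A smaller omission: to conclude $\|\nabla u(t)\|_{L^2}\sim|t|^{-1}$ after undoing the lens transform you also need a weighted bound of the type $\||y|\,\epsilon(s)\|_{L^2}\to 0$, since the gradient picks up the term $\frac{x}{2t}v$; the paper derives this by a separate backward integration, and $H^1$ convergence alone does not give it.
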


\noindent
{\bf Comments on Theorem \ref{th:main}}\\

1. {\it Structure of the solution}: From the proof, the structure of the critical mass blowup solution is explicit and is seen to converge in some suitable sense to $S(t)$ given by (\ref{st}). Moreover, our proof is very robust and we expect it to carry over to a large class of problems, provided a certain spectral assumption can be verified.\\

2. {\it Uniqueness}: Merle proved the uniqueness of the critical mass blow up solution for the local NLS (\ref{NLS}); see \cite{Merle1993}. The proof, however, is very much based again on the existence of the pseudo-conformal symmetry. The same proof would yield uniqueness of the critical mass blow up solution for $\Phi(x)=\frac{1}{|x|^2}$, see \cite{LMR2007} for a similar result. In the more general setting of Theorem \ref{th:main}, a weak uniqueness statement could be derived simply from the fact that the solution is build by Picard iteration, but a strong general $H^1$ uniqueness statement following \cite{Merle1993} is open. This question is connected to the uniqueness of nondispersive objects, see \cite{Martel2005} for a related problem.\\

The second question we ask is the persistence of the critical type blowup regime. Here we work for the sake of simplicity directly with $\Phi(x)=\frac{1}{|x|^2}$. We adapt the analysis of Bourgain and Wang \cite{Bourgain+Wang1997}  who proved some finite codimensional stability of the $S(t)$ dynamics for the local (NLS) in space dimension $d=1,2$. 

\begin{theorem}{\bf (Finite codimensional stability of the $S(t)$ dynamics).}
 \label{th:BW}
Consider $\eqref{eq:hartree}$ with $\Phi(x)=\frac{1}{|x|^2}.$ Let $\psi_0 \in C^\infty_0(\RR^4)$ be radial, suppose $|\psi_0(x)| \lesssim |x|^N$ for $N$ sufficiently large, and define $\psi(x) = \alpha \psi_0(x)$. Then for $|\alpha| >0 $ and $\delta > 0$ sufficiently small, there exists a blowup solution
\[
u = S(t) + z_\psi+\e
\]
solving \eqref{eq:hartree} on the time interval $[-\delta, 0)$ such that $$\lim_{t \nearrow 0} \| \epsilon(t) \|_{H^1_x} = 0$$ and where $z_\psi \in C^0_t H^1_x([-\delta, +\delta] \times \RR^4)$ solves the initial-value problem
\[
\left \{ \begin{array}{l} i \partial_t z_\psi + \Delta z + \big ( |x|^{-2} \ast |z_\psi|^2 \big ) z_\psi = 0, \\[1ex]
z_\psi(0,x) = \psi(x) . \end{array} \right . 
\]
In particular, we have
$$\|\nabla u(t)\|_{L^2}\sim \frac{1}{|t|} \ \ \mbox{as} \ \ t\nearrow 0.$$
\end{theorem}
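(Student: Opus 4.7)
The plan is to adapt the Bourgain-Wang scheme of \cite{Bourgain+Wang1997}. Writing $u = S + z_\psi + \varepsilon$ and using that $S$ and $z_\psi$ each solve the equation, the correction satisfies
\begin{equation*}
i\pr_t \varepsilon + \Delta \varepsilon + \Lin_{S+z_\psi}\varepsilon + N(\varepsilon) = -G, \qquad G := F(S+z_\psi) - F(S) - F(z_\psi),
\end{equation*}
where $F(u) = (|x|^{-2}\ast|u|^2)u$, $\Lin_{S+z_\psi}$ is the $\R$-linearization of $F$ at $S+z_\psi$, and $N$ is quadratic and higher in $\varepsilon$. Expanding $|S+z_\psi|^2 = |S|^2 + 2\Re(S\bar z_\psi) + |z_\psi|^2$ shows that every summand of $G$ carries at least one factor of $z_\psi$, possibly inside the convolution. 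For $|\alpha|$ small, $\|\psi\|_{L^2} < \|Q\|_{L^2}$, so $z_\psi$ is globally $H^1$-well-posed; the high-order vanishing $|\psi(x)| \lesssim |x|^N$ persists (in a Taylor-expansion sense) on $[-\delta,\delta]$ and furnishes the crucial smallness of $z_\psi$ on the concentration scale of $S$.

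The next step is to control $G$. Rescaling to soliton coordinates $y=x/t$ and using $|S(t,x)|\sim |t|^{-2} Q(x/t)$ together with $|z_\psi(t,x)|\lesssim |x|^N$, Hardy-Littlewood-Sobolev for the convolution with $|x|^{-2}$ yields a bound of the form
\begin{equation*}
\|G(t)\|_{H^1_x} \lesssim |t|^{N-C}
\end{equation*}
for some absolute constant $C$; this is integrable on $[-\delta,0)$ once $N$ is taken large, which is the sole role of the vanishing hypothesis on $\psi$.

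With $G$ controlled, I would construct $\varepsilon$ by backward iteration. Pick $t_n \nearrow 0^-$ and solve \eqref{eq:hartree} backwards on $[-\delta,t_n]$ with terminal data $u_n(t_n) = S(t_n) + z_\psi(t_n)$, so that $\varepsilon_n(t_n) = 0$. The Duhamel formula
\begin{equation*}
\varepsilon_n(t) = i\int_t^{t_n} e^{i(t-s)\Delta}\bigl(\Lin_{S+z_\psi}\varepsilon_n + N(\varepsilon_n) + G\bigr)(s)\,ds,
\end{equation*}
combined with the source bound and Strichartz estimates for the linear operator $i\pr_t + \Delta + \Lin_{S+z_\psi}$, should yield $\varepsilon_n$ on $[-\delta,t_n]$ uniformly small in $L^\infty_t H^1_x$. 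Weak compactness in $H^1$ and continuity of the Hartree flow then furnish a limit $\varepsilon$ on $[-\delta,0)$ with $\|\varepsilon(t)\|_{H^1}\to 0$ as $t\nearrow 0$; the blowup rate $\|\nabla u(t)\|_{L^2}\sim|t|^{-1}$ follows from that of $S$ together with the $H^1$-smallness of $\varepsilon$ and regularity of $z_\psi$.

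The main obstacle is the linear theory for the time-dependent operator $i\pr_t+\Delta+\Lin_{S+z_\psi}$. Passing to pseudo-conformal coordinates $s=-1/t$, $y=x/t$ turns the singular soliton $S$ into the stationary profile $Q(y)e^{is}$, so the operator is conjugate, up to the explicit pseudo-conformal phase, to the linearization of Hartree around $Q$ perturbed by a small $z_\psi$-piece. Because the Hartree nonlinearity is nonlocal, this linearization is an integro-differential matrix operator: one needs its negative eigenvalue, its symmetry-generated generalized kernel (phase, scaling, translation, Galilean), the spectral gap and coercivity on the orthogonal complement, and the absence of embedded eigenvalues. This spectral picture, addressed earlier in the paper, dictates a finite-codimensional projection of $\varepsilon_n$ together with modulation of the symmetry parameters of $S$ at each step of the iteration in order to close the energy estimate; this is the source of the "finite codimensional" nature of the stability, and the technical heart of the argument.
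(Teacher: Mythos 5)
Your overall architecture (pseudo-conformal conjugation to the stationary frame, spectral analysis of the nonlocal linearization around $Q$, backward construction of $\varepsilon$ from the blowup time with a finite-codimensional projection) matches the paper's, but there is a decisive gap at the step where you bound the interaction source $G$. You claim $\|G(t)\|_{H^1_x}\lesssim |t|^{N-C}$ follows from $|\psi(x)|\lesssim|x|^N$, and that this is ``the sole role of the vanishing hypothesis.'' For the Hartree nonlinearity this is false, and the paper stresses that it is \emph{never} true: the cross term
\begin{equation*}
\big(|x|^{-2}\ast|z_\psi(t)|^2\big)(x)\,S(t,x)
\end{equation*}
involves $\big(|x|^{-2}\ast|z_\psi(t)|^2\big)(0)=\int_{\RR^4}|z_\psi(t,y)|^2\,|y|^{-2}\,dy$, a strictly positive quantity of order $\alpha^2$ uniformly in $t$ no matter how fast $\psi$ vanishes at the origin, because the nonlocal operator $(-\DD)^{-1}$ averages over all of space and destroys the localization of $z_\psi$. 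Consequently $\|G(t)\|_{\dot H^1_x}\gtrsim\alpha^2|t|^{-1}$, which is not integrable on $[-\delta,0)$; equivalently, in the rescaled frame this term drives secular growth of the phase/scaling root modes. Your Duhamel iteration with terminal data $\varepsilon_n(t_n)=0$ therefore cannot close with $\varepsilon$ small in $H^1$.

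The paper's proof is organized precisely around repairing this. The offending term is, on the support of the bulk, a real constant multiple of the profile to leading order, so it is absorbed into a time-dependent phase $\gamma(s)$ via $\dot\gamma_{1}(s)= s^{-2}(-\DD)^{-1}(|z_\psi|^2)(1/s,0)+\cdots$, and a companion secularity forces a scale modulation $\lambda(s)$; this is why Proposition \ref{prop:BW} produces $e^{i(1/t+\gamma(1/t))}Q_{\lambda(1/t)}$ rather than $S(t)$ itself, with $\gamma\to 0$, $\lambda\to 1$ only as $t\nearrow 0$. The residual $\big[(-\DD)^{-1}(|z_\psi|^2)(t,x)-(-\DD)^{-1}(|z_\psi|^2)(t,0)\big]Q$ does vanish to high order, but only because of \emph{radiality} together with Newton's theorem (the potential generated by a radial density at radius $r$ depends only on the mass inside $r$, where $z_\psi$ is small by Lemma \ref{lem:BW}); your argument never invokes radial symmetry at this point, and in the nonradial case the paper needs extra nonlinear vanishing conditions on $\psi$. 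Two secondary remarks: attributing the modulation solely to the root-mode projection misplaces the difficulty (Bourgain--Wang for local NLS needs no modulation, since there the vanishing of $\psi$ genuinely kills the interaction); and the paper deliberately avoids Strichartz estimates for the linearized propagator, using instead the polynomial-growth bound coming from coercivity of the $L_\pm$ quadratic forms on the non-root subspace, since dispersive estimates for this long-range nonlocal linearization are themselves delicate.
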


\noindent {\bf Comments on the Result}\\

1. {\it Long range issue}: Our result is in the spirit of Bourgain-Wang \cite{Bourgain+Wang1997}  who treated the case of local nonlinearities in $d=1,2$ space dimensions. However, due to the nonlocal nature of the Hartree nonlinearity which is long range in some sense, our proof departs in some respect from Bourgain-Wang method by introducing some modulation theory and by exploiting radial symmetry to decouple the blowup part from radiation. As sketched below, we expect our result to be generalizable to nonradial data, provided that some implicit conditions are imposed on $\psi(x)$; see the remark at the end of Section \ref{sec:proof:BW}.\\

2. {\it  Scattering}: The Bourgain-Wang strategy is based on the construction of some nonlinear wave operator. Undoing the pseudo conformal transformation, the statement is equivalent to proving some finite codimensional stability of $Q$, i\,e.~we exhibit {\it global} solutions with 
$$u(t,x)=Q(x)e^{it}+\tilde{z}_{\phi}(t,x)+\e(t,x) .$$
Here $\tilde{z}_{\phi}$ is the scattering wave and $\|\e\|_{H^{1}}\to 0$ as $t\to+\infty$. This strategy is very robust and we expect that it would carry over to the case $d=3$ and $\Phi(x)=\frac{1}{|x|}$ to construct non-trivial solutions that disperse to $Q$, which would extend the results in \cite{FTY2002}. See also \cite{Cote2007}, \cite{Krieger+Schlag2006} for related results with local nonlinearities.\\

Let us conclude by saying that both Theorem \ref{th:main} and Theorem \ref{th:BW} rely on solving in some sense the Cauchy problem from infinity. The strength of this strategy it that is does not require fine dispersive estimates on the propagator of the linearized flow close to the ground state. One should think that the long range structure of the problem actually make this last question quite delicate. However, all we need is to ensure an at most polynomial instability of the flow close to $Q$, which relies on elliptic nondegeneracy properties of the linearized operator. As initiated by Weinstein \cite{Weinstein1985}, such properties rely on the variational characterization of the ground state and a nondegeneracy result for the linearized operator. For the Hartree equation considered here, the nondegeneracy of the linearized operator does not follow from an adaptation of Weinstein's argument. Rather, our nondegeneracy proof will be based on an argument given by Lenzmann \cite{Lenzmann2008} for a Hartree NLS in dimension $d=3$; see Theorem \ref{th:ker} and Section \ref{sec:spectral} below.

\subsection*{Outline and Notation}
Theorems \ref{th:main} and \ref{th:BW} will be proven in Sections \ref{sec:proof:main} and \ref{sec:proof:BW}, respectively. In Section \ref{sec:spectral}, we prove uniqueness of ground states $Q$ and, as a main technical result, the nondegeneracy of the linearized operator $L_+$ close to $Q$, see Theorem \ref{th:ker} below. In Section \ref{sec:Qt}, we construct a modified class of ground state-like profiles called $\Qt$.

In what follows, we shall employ standard notation. By $a \lesssim b$ we mean that $a \leq C b$ for some positive constant $C >0$, which is allowed to depend on $k$ appearing in Theorem \ref{th:main}, as well as some large constant $T_0 > 0$ to be chosen in Section \ref{sec:proof:main} below. We remind the reader that we work in $d = 4$ space dimensions throughout the rest of this paper.

\subsection*{Acknowledgments}
J.~K.~is partially supported by National Science Foundation Grant DMS-0757278 and a Sloan Foundation Fellowship. E.~L.~gratefully acknowledges partial support by the National Science Foundation Grant DMS-0702492. P.R. was supported by the Agence Nationale de la Recherche, ANR Projet Blanc OndeNonLin and ANR jeune chercheur SWAP.

\section{Existence of critical mass blowup solutions}\
 \label{sec:proof:main}

This section is devoted to the proof Theorem \ref{th:main}. We shall freely use some results whose proofs are postponed to Sections \ref{sec:spectral} and \ref{sec:Qt} below.


\subsection{Reformulation of the problem}


Let us start with the following observation. Let $k > 0$ be a fixed number and assume that $v = v(t,x)$ is a sufficiently smooth radial solution of
\begin{equation} \label{eq:vPDE}
i \pr_t v = -\Delta v -\left ( \frac{\phi(t^{-k} |\cdot|^k)}{|\cdot|^2} \ast |v|^2 \right ) v ,
\end{equation}
for times $t \geq T_0$, where $T_0>0$ a large constant. An elementary calculation shows that
\begin{equation} \label{eq:ufromv}
u(t,x) = \frac{1}{t^2} e^{\frac{i x^2}{4t}} v \big (\frac{-1}{t}, \frac{x}{t} \big )
\end{equation}
solves (\ref{eq:hartree}) on the time interval $[-T_0^{-1}, 0)$. Our goal is now to construct a global solution $v(t,x)$ to (\ref{eq:ufromv}) such that: 
$$v(t,x)-Q(x)e^{it}\to 0  \ \ \mbox{in} \ \ \Sigma \ \ \mbox{as} \ \ t\to +\infty$$ where $Q$ is the ground state solution to (\ref{eqsoliton}). If we introduce a decomposition 
\be
\label{chehio}
v(t,x)=e^{it}[Q(x)+\e(t,x)]
\ee and try to solve for $\e$,  we will have in the RHS of the $\e$ equation a term like:
\be
\label{estpourri}
\big (\frac{\phi(t^{-k}|\cdot|^{k})}{|\cdot|^{2}}\ast |Q|^{2} \big ) \epsilon - \big ( \frac{1}{|\cdot|^{2}}\ast |Q|^{2} \big )  \epsilon =O \big (\frac{\epsilon}{t^{2}} \big )
\ee
if only $H^1$ control on $\e$ is known. This information is not sufficient to counteract losses due to the algebraic degeneracy of the generalized null-space of the linear operator close to $Q$. For this reason, our first step is to introduce a modified ground state profile called $\Qt$. To this end, we let $\Qinf \in H^1(\R^4)$ denote the ground state solution to 
\begin{equation}\label{Q_infty}
-\DD \Qinf - \big ( \frac{1}{|x|^2} \ast |\Qinf|^2 \big ) \Qinf = -\Qinf, \quad \Qinf(r) > 0 .
\end{equation}
Then the next result follows from an implicit function argument and the nondegeneracy of the linearized operator at $Q^{\infty}$. The proof of this result is postponed to Sections \ref{sec:spectral} and \ref{sec:Qt} below.

\begin{Prop}\label{thm:Q} 
Let $k >0$ be fixed in (\ref{Q_t}) and suppose $\phi(\cdot)$ satisfies the assumptions in Theorem \ref{th:main}. Then, for $T_0 =T_0(k) > 0$ sufficiently large, the following properties hold.
\begin{enumerate}
\item[(i)] There exists a family $\{ \Qt \}_{t \geq T_0}$ of radial, real-valued solutions to 
\be
\label{Q_t}
-\Delta Q^{(t)}-\left(\frac{\phi(t^{-k}|\cdot|^k)}{|\cdot|^2}\ast |Q^{(t)}|^2\right)Q^{(t)}=-Q^{(t)}
\ee
such that $t \mapsto \Qt \in H^1(\R^4)$ is $C^1$ and 
\[ \lim_{t \to \infty} \| \Qt - \Qinf \|_{H^1_x} = 0 . \]

\item[(ii)] We have the following bound
\be
\label{boundpat}
\| \pr_t \Qt \|_{H^1_x} \lesssim t^{-k-1} , \quad \mbox{for $t \in [T_0, \infty)$}.
\ee
\end{enumerate}
\end{Prop}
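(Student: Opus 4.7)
The plan is to treat the problem as a perturbation of the $s=0$ limiting equation, where $s := t^{-k}$ is a small parameter, and apply the implicit function theorem (IFT). Introduce the nonlinear map
\[
F \colon [0, s_0) \times H^2_{\mathrm{rad}}(\RR^4) \to L^2_{\mathrm{rad}}(\RR^4), \quad F(s, Q) := -\Delta Q + Q - \left(\frac{\phi(s|\cdot|^k)}{|\cdot|^2} \ast Q^2\right) Q ,
\]
so that $F(0, Q^{(\infty)}) = 0$ by \eqref{Q_infty}. Producing a $C^1$ branch $s \mapsto Q(s) \in H^2_{\mathrm{rad}}$ with $Q(0) = Q^{(\infty)}$ and setting $Q^{(t)} := Q(t^{-k})$ for $t \geq T_0 := s_0^{-1/k}$ would yield assertion (i), while (ii) would follow from the chain rule $\partial_t Q^{(t)} = -k\, t^{-k-1}\, Q'(t^{-k})$ together with a uniform bound on $Q'$.

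To invoke the IFT, the first task is to verify $C^1$-regularity of $F$ near $(0, Q^{(\infty)})$, using Hardy--Littlewood--Sobolev to control the Riesz-type convolution kernel $1/|x|^2$ together with the rapid (Agmon-type exponential) decay of $Q^{(\infty)}$, which is obtained from \eqref{Q_infty} once one observes that the potential $(1/|x|^2) \ast |Q^{(\infty)}|^2$ vanishes at infinity. The Fréchet derivative in $Q$ at the base point equals
\[
L_+ h = -\Delta h + h - 2\bigl((1/|\cdot|^2) \ast (Q^{(\infty)} h)\bigr) Q^{(\infty)} - \bigl((1/|\cdot|^2) \ast |Q^{(\infty)}|^2\bigr) h ,
\]
which is a relatively compact perturbation of $-\Delta + 1$ on $L^2_{\mathrm{rad}}(\RR^4)$, hence Fredholm of index $0$. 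By the nondegeneracy Theorem \ref{th:ker}, $\ker L_+ = \mathrm{span}\{\partial_{x_j} Q^{(\infty)}\}_{j=1}^4$ on all of $L^2(\RR^4)$, and since these functions are non-radial, $L_+$ is injective --- hence bijective --- when restricted to the radial subspace.

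The IFT then produces $s_0 > 0$ and a map $Q \in C^1([0, s_0); H^2_{\mathrm{rad}})$ with $F(s, Q(s)) = 0$ and $Q(0) = Q^{(\infty)}$, which yields the family $\{Q^{(t)}\}_{t \geq T_0}$ with $Q^{(t)} \to Q^{(\infty)}$ in $H^1$ as $t \to \infty$, proving (i). For (ii), the implicit-function formula gives
\[
Q'(s) = -\bigl[D_Q F(s, Q(s))\bigr]^{-1} \partial_s F(s, Q(s)), \qquad \partial_s F(s, Q) = -\bigl(\phi'(s|\cdot|^k)\, |\cdot|^{k-2} \ast Q^2\bigr) Q ,
\]
and continuity of $Q'$ up to $s = 0$ furnishes a uniform bound $\|Q'(s)\|_{H^1} \leq C$ on $[0, s_0)$, whence $\|\partial_t Q^{(t)}\|_{H^1} = k\, t^{-k-1}\, \|Q'(t^{-k})\|_{H^1} \lesssim t^{-k-1}$, which is \eqref{boundpat}.

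The main technical obstacle will be the uniform $L^2$-control of $\partial_s F(s, Q(s))$ as $s \to 0^+$, because the factor $|x|^{k-2}$ appearing in $\partial_s V_s = \phi'(s|x|^k)\, |x|^{k-2}$ is singular at the origin when $k < 2$ and grows at infinity when $k > 2$. The origin singularity is locally integrable in $\RR^4$ (since $k - 2 > -4$), so convolution with $Q^2 \in L^1 \cap L^\infty$ is well-defined near $0$; the large-$|x|$ growth is dampened by the hypothesis $|\phi'(r)| \lesssim \langle r \rangle^{-1}$, which yields $|\phi'(s|x|^k)\, |x|^{k-2}| \lesssim \min\bigl(|x|^{k-2},\, (s|x|^2)^{-1}\bigr)$, and is further controlled by the exponential decay of the outer $Q$-factor. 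Together these estimates provide the uniform bounds on $\partial_s F$ needed to close the $C^1$-regularity argument for the IFT.
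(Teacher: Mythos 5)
Your setup for part (i) is essentially the paper's own argument: the paper also runs an implicit function theorem in the parameter $\tau=1/t$ for the map $G(u,\tau)=u+(-\Delta+1)^{-1}g(u,\tau)$ on radial $H^1$, with invertibility of the linearization supplied by Theorem \ref{th:ker} (note, incidentally, that Theorem \ref{th:ker} only asserts $\mathrm{ker}\,L_+=\{0\}$ in the radial sector, not the full translational kernel you quote; but the radial statement is all you use, so this is harmless).

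The genuine gap is in part (ii). First, with your parametrization $s=t^{-k}$ the map $F$ is not $C^1$ up to $s=0$ on a neighborhood in $[0,s_0)\times H^2_{\mathrm{rad}}$ when $k$ is large: at $s=0$ the formal derivative $\partial_s F(0,Q)=-\phi'(0)\bigl(|\cdot|^{k-2}\ast Q^2\bigr)Q$ involves $\int_{\RR^4}|y|^{k-2}Q(y)^2\,dy$, which diverges for generic $Q\in H^2(\RR^4)$ once $k>4$. So the IFT cannot be applied verbatim in that space to produce a branch that is $C^1$ \emph{up to} $s=0$ with uniformly bounded derivative; you must either work in a weighted (exponentially decaying) function space or, as the paper does, decouple the two steps: the IFT in $\tau=1/t$ yields only the crude bound $\|\partial_t\Qt\|_{H^1}\lesssim t^{-1}$ (Theorem \ref{th:Qt}(ii)), and the sharp bound $t^{-k-1}$ is obtained afterwards. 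Second, and more importantly, the estimate you sketch for $\partial_s F(s,Q(s))$ when $k>2$ requires pointwise exponential decay of the \emph{perturbed} profiles $Q(s)=\Qt$, uniformly in the parameter --- not of $\Qinf$. Convergence $\Qt\to\Qinf$ in $H^1$ (or $H^2$) gives no pointwise decay whatsoever, so "the exponential decay of the outer $Q$-factor" is exactly the unproven ingredient. The paper devotes a separate Slaggie--Wichmann argument (Lemma \ref{lem:expdecay}) to establishing $|\Qt(x)|\leq Ce^{-\delta|x|}$ with constants uniform in $t$, and only then derives $\|\partial_t\Qt\|_{H^1}\lesssim t^{-k-1}$ (Lemma \ref{lem:tdecay}) by splitting $|x-y|^{k-2}\lesssim |x|^{k-2}+|y|^{k-2}$ and absorbing each power into an exponential. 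Your proposal correctly identifies where the difficulty sits, but the resolution offered does not close it; you need to supply the uniform decay lemma (or an equivalent weighted-space formulation of the IFT) for the argument to be complete.
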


\begin{Rk} The bound (\ref{boundpat}) means that for the $Q^{(t)}-Q^{\infty}$ part of $\e$ in the decomposition (\ref{chehio}), the estimate (\ref{estpourri}) can be improved to gain $O(\frac{1}{t^k})$. This is very much a consequence of the uniform exponential decay of $Q^{(t)}$, see (\ref{estg}).
\end{Rk}

We now aim at finding $v$ solution to (\ref{eq:vPDE}) and introduce a decomposition:
\be
\label{eq:ansatz}
v(t,x)=e^{it}[Q^{(t)}+\e(t,x)].
\ee
Then the equation for $\epsilon$, which we record in vectorial notation so that the linear operator is actually $\C$-linear, is the following 
\begin{equation}\label{eq:eps}
i\partial_{t}\bm\epsilon\\ \overline{\epsilon}\endm+\calH^{(t)}\bm\epsilon\\ \overline{\epsilon}\endm= \bm F(t,x,\epsilon)\\ -\overline{F(t,x,\epsilon)}\endm .
\end{equation}
Here $\Ht$ is found to be matrix-valued (non self-adjoint) operator
\begin{equation} \label{eq:Hinf}
\Ht = \left ( \begin{array}{cc} \DD - 1 + \Vt + \Wt & \Wt \\ -\Wt & -\DD + 1 - \Vt - \Wt \end{array} \right ),
\end{equation}
where $\Vt$ and $\Wt$ are bounded operators on $L^2(\RR^4)$ (as one easily verifies) which are given by
\begin{equation} \label{def:VtWt}
\Vt \xi =  \big ( \frac{\phi(t^{-k} |\cdot|^k)}{|\cdot|^2} \ast |\Qt|^2 \big ) \xi, \quad \Wt \xi = \Qt \big ( \frac{\phi(t^{-k} |\cdot|^k )}{|\cdot|^2} \ast (\Qt \xi ) \big ) . 
\end{equation}
Note that $\Wt$ is a nonlocal operator. Furthermore, the forcing term $F$ in (\ref{eq:eps}) reads
\begin{align}
F(t,x,\epsilon) & = - \left \{ i \pr_t \Qt  + \left ( \frac{\phi(t^{-k} |\cdot|^k)}{|\cdot |^2} \ast (\Qt (\epsilon + \overline{\epsilon} ) ) \right ) \epsilon \right . \nonumber \\
& \qquad \qquad \left . + \left ( \frac{\phi(t^{-k} |\cdot|^k)}{|\cdot |^2} \ast |\epsilon|^2 \right ) (\Qt + \epsilon ) \right \} . \label{eq:F}
\end{align}
Note that we have the regularity $F \in H^1(\RR^4)$, as can be verified using the Hardy-Littlewood-Sobolev inequality etc.\\

Theorem \ref{th:main} is now a consequence of the following:

\begin{prop}[Solutions to the $\e$ equation]
\label{prop:epsilon} For $k \geq 5$ and $T_0 =T_0(k) > 0$ sufficiently large, the equation \eqref{eq:eps} admits a solution $\epsilon\in C^{0}_t H^1_x([T_{0},\infty) \times \RR^4)$ with the additional property 
\be
\label{oo}
|| x \epsilon(t)||_{L_{x}^{2}}\leq \delta t^{-k+5}, \ \ \|\e(t)\|_{H^1_x}\lesssim t^{-k+4}\quad \mbox{for $t \geq T_0$}, 
\ee
where $\delta>0$ can be chosen arbitrarily small, provided that $T_{0}$ is sufficiently large. 
\end{prop}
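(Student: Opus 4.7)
The plan is to construct $\epsilon$ as a fixed point of a Picard iteration for \eqref{eq:eps} with data imposed at infinity, namely $\epsilon(\infty)=0$. The dominant contribution to $F$ in \eqref{eq:F} is $-i\pr_t\Qt$, which by Proposition \ref{thm:Q} (ii) has size $t^{-k-1}$ in $H^1_x$; the uniform exponential decay of $\Qt$ gives the same rate for the weighted norm $\|x\,\pr_t \Qt\|_{L^2_x}$. I will close the fixed point in the Banach space
\begin{equation*}
X := \Big\{\epsilon \in C^0_t\big(H^1_x \cap L^2_x(|x|^2\,dx)\big) : \|\epsilon\|_X<\infty\Big\},
\end{equation*}
\begin{equation*}
\|\epsilon\|_X := \sup_{t\geq T_0}\Big( t^{k-4}\|\epsilon(t)\|_{H^1_x} + \delta^{-1} t^{k-5}\|x\epsilon(t)\|_{L^2_x}\Big),
\end{equation*}
so that \eqref{oo} corresponds to $\|\epsilon\|_X\leq 1$, with $\delta$ adjustable by choosing $T_0$ large.

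The heart of the argument will be a linear Duhamel estimate for the propagator of $\Hinf$, regarded as the $t\to\infty$ limit of $\Ht$. Although $\Hinf$ is non-self-adjoint, the combination $J\Hinf$ is symmetric (with $J$ the symplectic form), so on the radial sector $\Hinf$ decomposes into a finite-dimensional generalized null space -- spanned by the symmetry-induced vectors $iQ^{(\infty)}$ (phase), $\Lambda Q^{(\infty)}$ (scaling), $|x|^2 Q^{(\infty)}$ (pseudo-conformal), and one Jordan partner obtained by inverting $L_+$ -- and its symplectically-complementary subspace. On the latter, the coercivity of $L_+$ (a consequence of the nondegeneracy Theorem \ref{th:ker} of Section \ref{sec:spectral}) makes $\Hinf$ invertible with spectrum bounded away from zero, so the propagator is uniformly $H^1_x$-bounded. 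On the generalized null space, the propagator grows at most polynomially in the time lag, with exponent equal to the longest Jordan-chain length $N$. Consequently, for any source $g$ with $\|g(s)\|_{H^1_x} + \|xg(s)\|_{L^2_x} \lesssim s^{-M}$ and $M > N+1$,
\begin{equation*}
\psi(t) := -i\int_t^\infty e^{-i(t-s)\Hinf} g(s)\, ds
\end{equation*}
is well-defined with $\|\psi(t)\|_{H^1_x} \lesssim t^{-M+N+1}$ and $\|x\psi(t)\|_{L^2_x} \lesssim t^{-M+N+2}$; the extra $t$-factor in the weighted estimate comes from the fact that $[x,\Hinf]$ is a first-order operator.

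The nonlinear estimates are then routine. The Hardy-Littlewood-Sobolev inequality (convolution with $|x|^{-2}$ is bounded $L^{4/3}(\R^4)\to L^4(\R^4)$), combined with $|\phi|\leq C$ and the exponential decay of $\Qt$, gives $\|F_{\rm quad}\|_{H^1_x}\lesssim \|\epsilon\|_{H^1_x}^2$ and $\|F_{\rm cub}\|_{H^1_x}\lesssim \|\epsilon\|_{H^1_x}^3$; the weighted bounds are obtained either by absorbing the $x$ against the exponential decay of $\Qt$ (in the quadratic terms containing a $\Qt$-factor) or by a direct estimate (for the purely cubic term). For $\|\epsilon\|_X\leq 1$, the nonlinear part of $F$ decays like $t^{-2(k-4)}$ and the linear part like $t^{-k-1}$. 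Plugging these into the linear estimate, and choosing $k$ large enough -- the threshold $k\geq 5$ is where the integrals converge with the required margin -- reproduces the target bounds \eqref{oo} for the image of the fixed-point map; the same computation applied to differences $F(t,\epsilon_1) - F(t,\epsilon_2)$ yields a contraction constant $<1$ for $T_0$ large. The time-dependence of $\Ht$ is treated as a perturbation: $(\Hinf-\Ht)\epsilon$ is moved to the right-hand side, and its operator norm is $O(t^{-k})$ by the assumptions on $\phi$ and by Proposition \ref{thm:Q}, contributing an extra source of order $O(t^{-2k+4})$ which is easily absorbed.

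The main obstacle is the spectral analysis of $\Hinf$ on the radial sector: namely, pinning down the generalized null space, identifying the longest Jordan chain, and producing the sharp propagator bound on its symplectic complement. This is precisely where Theorem \ref{th:ker} is essential, since without nondegeneracy of $L_+$ one cannot rule out extra generalized null vectors that would extend the chain (thereby raising $N$ and the required lower bound on $k$) or create resonances that would spoil the coercivity on the complement.
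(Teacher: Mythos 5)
Your overall architecture is the right one (solving from $t=\infty$, nondegeneracy of $L_+$ from Theorem \ref{th:ker} controlling the generalized null space, a $4$-step Jordan chain producing the polynomial loss $t^{4}$, HLS/Hardy for the nonlinearity, and a separate weighted estimate via $[x,\Hinf]$), and it is close in spirit to the paper's nested iteration. But there is one genuine gap, and it sits exactly at the point the paper flags in \eqref{estpourri}: your claim that $\|\Hinf-\Ht\|_{H^1_x\to H^1_x}=O(t^{-k})$ is false. The contribution from $\Qt-\Qinf$ is indeed $O(t^{-k})$, but the contribution from replacing $\phi(t^{-k}|x-y|^{k})$ by $1$ in the kernel is only $O(t^{-2})$ in operator norm: for $|x|\gtrsim t$ one has $|1-\phi(t^{-k}|x-y|^{k})|=O(1)$ and the convolution with the localized $|Q|^{2}$ gives merely $|x|^{-2}\sim t^{-2}$, and a general $H^1$ function can concentrate there. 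This is precisely why the paper introduces $\Qt$ at all and why its scheme never lets $\Ht-\Hinf$ act on anything but exponentially decaying objects (the root modes $\phi_i$), where the full $t^{-k}$ gain is recovered. If you feed the honest $O(t^{-2})$ perturbation into your own Duhamel estimate, the source $(\Hinf-\Ht)\epsilon$ has size $t^{-2}\cdot t^{-k+4}=t^{-k+2}$, and its root component, amplified by the $(1+|t-s|^{3})$ growth of $e^{-i(t-s)\Hinf}$ from Lemma \ref{lem:H_inf}(iii), returns $t^{-k+6}$ --- strictly worse than the target $t^{-k+4}$ in \eqref{oo}. So the contraction does not close as written.

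The repair is to split the perturbation the way the paper splits everything else: the \emph{root projection} of $(\Hinf-\Ht)\epsilon$ pairs $\epsilon$ against the exponentially localized dual modes $\psi_i$, and after moving the potential difference onto $\psi_i$ one does recover $O(t^{-k}\|\epsilon\|_{H^1})=O(t^{-2k+4})$, which survives the $t^{4}$ Jordan loss for $k\geq 5$; the \emph{non-root} component only meets the uniformly bounded part of the propagator, where the crude $O(t^{-2})$ bound yields an acceptable $t^{-k+3}$. The paper implements this automatically by solving the root part through modulation ODEs relative to $\Hinf$ and the non-root part through the frozen-coefficient flow $e^{is\Ht}$ in \eqref{auxiliary}, whose $H^1$ boundedness on the non-root sector comes not from spectral calculus but from conservation of the quadratic form $\la L_-^{(t)}\cdot,\cdot\ra+\la L_+^{(t)}\cdot,\cdot\ra$ (coercive by Theorem \ref{th:ker}); the errors $(\Pt-\Pinf)F$, $(\Ht-\Hinf)\sum a_i\phi_i$ and $\partial_t$ of the frozen solution are then removed by an inner iteration (Lemma \ref{prop:subiteration}). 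Two smaller points: the growth exponent of the propagator is the chain length minus one (here $3$, not $4$), and the paper's nonlinear bootstrap (Lemma \ref{lem:F}) actually runs with $k\geq 8$ rather than the $k\geq 5$ you quote.
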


\begin{Rk} Observe that the critical mass condition $\|u_0\|_{L^2}=\|Q\|_{L^2}$ is the consequence of the strong convergence (\ref{oo}) and the conservation of the $L^2$ norm.
\end{Rk}

\subsection{Spectral structure of $\Ht$}


The proof of Proposition \ref{prop:epsilon} relies on the {\it algebraic} instability of the linearized operator close to $Q^{\infty}.$ The following proposition is a standard consequence of the variational characterization of $Q$ and some nondenegeracy properties, see Theorem \ref{th:ker} below.

\begin{lemma}[Spectra structure of $\Hinf$]
\label{lem:H_inf} The operator $\calH^{(\infty)}$ acting on $L^{2}_{\text{rad}}(\R^{4}, \C^{2})$ has the following properties.
\begin{enumerate}

\item[(i)] The essential spectrum is $\sigma_{\rm ess}(\Hinf) = (-\infty, -1] \cup [1,\infty)$.
\item[(ii)] The generalized null-space  
\[ \calN = \big \{ f \in L^{2}_{\text{rad}}(\R^{4}, \C^{2}) : \mbox{$\exists m \in \mathbb{N}$ such that $(\Hinf)^m f = 0$} \big \} \] 
has dimension $\dim \mathcal{N} = 4$ and is generated by the following functions: 
\[
\phi_{1} =\bm i Q^{(\infty)}\\ -iQ^{(\infty)}\endm,\quad \phi_{2} =\bm 2Q^{(\infty)}+x\cdot \nabla Q^{(\infty)}\\ 2Q^{(\infty)}+x\cdot \nabla Q^{(\infty)}\endm, 
\]
\[
\phi_{3} =\bm i|x|^{2}Q^{(\infty)}\\ -i|x|^{2}Q^{(\infty)}\endm,\quad \phi_{4}=\bm \rho \\ \rho \endm ,
\]
where $\rho \in L^2_{\mathrm{rad}}(\RR^4)$ is the unique solution of
\[
L_{+}\rho=-|x|^{2} \Qinf ,
\]
with 
\[
L_{+}=-\DD + 1 - \big ( \frac{1}{|x|^2} \ast |\Qinf|^2 \big ) - 2 \Qinf \big ( \frac{1}{|x|^2} \ast (\Qinf \cdot) \big ).
\]
Moreover, the function $\rho$ is radial, smooth and exponentially decaying. 

\item[(iii)] We have following bound for the linear evolution associated to $\calH^{(\infty)}$:
\[
\big \| e^{it\calH^{(\infty)}}\bm f\\ \overline{f}\endm \big \|_{L^{2}_x}\lesssim\big (1+|t|^{3} \big )||f||_{L_{x}^{2}} .
\]
\end{enumerate}
\end{lemma}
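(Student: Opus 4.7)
My plan is to treat the three parts in sequence, leveraging the nondegeneracy of $L_+$ from Theorem~\ref{th:ker}.

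For part (i), I would invoke the stability of the essential spectrum under relatively compact perturbations. Write $\Hinf = \calH_0 + \calV$ with $\calH_0 = \mathrm{diag}(\Delta-1,-\Delta+1)$, whose spectrum is purely essential and equals $(-\infty,-1]\cup[1,\infty)$. The matrix potential $\calV$ is built from $\Vinf = |x|^{-2}\ast|\Qinf|^2$ and $\Winf \xi = \Qinf(|x|^{-2}\ast(\Qinf \xi))$. Exponential decay of $\Qinf$ makes $\Vinf$ a bounded multiplier vanishing at infinity, while the integral kernel $\Qinf(x)|x-y|^{-2}\Qinf(y)$ of $\Winf$ is Hilbert--Schmidt by Hardy--Littlewood--Sobolev combined with exponential decay of $\Qinf$. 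Hence $\calV$ is $\calH_0$-compact and Weyl's theorem yields the claimed essential spectrum.

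For part (ii), I would build the Jordan chain explicitly. A direct matrix calculation gives $\Hinf \binom{a}{a} = \binom{-L_+ a}{L_+ a}$ and $\Hinf \binom{ib}{-ib} = \binom{-iL_- b}{-iL_- b}$ for real radial $a,b$, where $L_- = -\Delta + 1 - \Vinf$. Combining this with the standard $L^2$-critical algebraic identities
\[
L_- \Qinf = 0, \qquad L_+(\Lambda \Qinf) = -2 \Qinf, \qquad L_-(|x|^2 \Qinf) = -4 \Lambda \Qinf
\]
(where $\Lambda = 2 + x\cdot\nabla$ is the scaling generator in $\RR^4$, and where the last two identities follow from applying $\Lambda$ respectively multiplying by $|x|^2$ in the ground state equation \eqref{Q_infty}, using the scaling homogeneity of $|x|^{-2}$ to cancel cross terms) together with the definition $L_+ \rho = -|x|^2 \Qinf$, one produces the chain $\Hinf \phi_1 = 0$, $\Hinf \phi_2 = -2i \phi_1$, $\Hinf \phi_3 = 4i\phi_2$, $\Hinf \phi_4 = -i \phi_3$. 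Existence, smoothness and exponential decay of $\rho$ follow because Theorem~\ref{th:ker} yields $\ker L_+ \cap L^2_{\mathrm{rad}} = \{0\}$, so $L_+$ is invertible on the radial sector with a spectral gap. That $\dim\calN = 4$ exactly is then forced by Theorem~\ref{th:ker}: any additional generalized null vector would generate an element of $\ker L_\pm \cap L^2_{\mathrm{rad}}$ beyond $\RR \Qinf$, which the nondegeneracy rules out.

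For part (iii), decompose $L^2_{\mathrm{rad}}(\RR^4;\CC^2) = \calN \oplus \calN^\perp_*$ using the Riesz projection at the isolated eigenvalue $0$; this is admissible since by (i) the rest of $\sigma(\Hinf)$ sits at distance at least one from the origin, and absence of embedded eigenvalues in the radial sector is standard given Theorem~\ref{th:ker}. On $\calN$ the chain from (ii) gives $\Hinf^4 \equiv 0$, so $e^{it\Hinf}|_{\calN}$ is a polynomial of degree three in $t$ and contributes at worst $\lesssim |t|^3$. On $\calN^\perp_*$, restricted to data of the form $\binom{f}{\bar f}$ with $\epsilon = U + iV$, the linearized flow conserves the quadratic Hamiltonian $\calE(U,V) = \tfrac12 \langle L_+ U, U \rangle + \tfrac12 \langle L_- V, V \rangle$ and the linearized $L^2$-mass. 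Weinstein-type coercivity $\calE(U,V) \gtrsim \|U\|_{H^1}^2 + \|V\|_{H^1}^2$---valid once the symplectic conditions defining $\calN^\perp_*$ are identified with the concrete analytic orthogonality $U \perp \Qinf, |x|^2\Qinf, \rho$ and $V \perp \Qinf$---then produces a uniform in $t$ bound on $e^{it\Hinf}$ restricted to this complement. Summing the two contributions gives the claimed $(1 + |t|^3)$ bound. The main obstacle will be matching the symplectic orthogonality $\calN^\perp_*$ with the analytic orthogonality conditions needed by the coercivity estimate, which requires computing the generalized null-space of the adjoint $\Hinf^*$ in parallel to (ii) and then invoking Theorem~\ref{th:ker} once more.
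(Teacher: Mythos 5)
Your proposal follows essentially the same route as the paper, which itself disposes of the lemma in a few lines by citing Weyl's theorem plus compactness of the nonlocal term for (i), an ``immediate adaptation of Weinstein'' for (ii), and the standard polynomial-growth bound for (iii); your Jordan-chain computation ($\Hinf\phi_2=-2i\phi_1$, $\Hinf\phi_3=4i\phi_2$, $\Hinf\phi_4=-i\phi_3$, resting on $L_-\Qinf=0$, $L_+(\Lambda \Qinf)=-2\Qinf$, $L_-(|x|^2\Qinf)=-4\Lambda \Qinf$) is correct, and your part (iii) is exactly the decomposition the paper uses implicitly in the proof of Lemma~\ref{prop:subiteration}. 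Two points deserve flagging. First, your stated reason that $\dim\calN=4$ \emph{exactly} is not quite the right one: extending the chain beyond $\phi_4$ does not require a new element of $\ker L_\pm$, but rather a radial solution of $L_-b=c\rho$, whose obstruction is the non-orthogonality $\la \rho,\Qinf\ra\neq 0$ (the paper records this separately when it asserts that the coefficients in \eqref{root1}--\eqref{root4} are positive); without that input, Theorem~\ref{th:ker} alone does not close the chain at length four. Second, the kernel $\Qinf(x)|x-y|^{-2}\Qinf(y)$ is in fact \emph{not} Hilbert--Schmidt in $d=4$, since $\int_{|z|<1}|z|^{-4}\,dz$ diverges logarithmically -- the paper makes the same slip -- but compactness of $\Winf$ survives, e.g.\ by writing $\Winf= c\,[\Qinf(-\DD)^{-1/2}]\,[(-\DD)^{-1/2}\Qinf]$ and using that multiplication by the decaying function $\Qinf$ composed with $(-\DD)^{-1/2}$ is compact on $L^2(\RR^4)$.
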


\begin{remarks*} {\em 
1) The proof of Lemma \ref{lem:H_inf} (given in Section \ref{sec:spectral} below) relies on a careful analysis of $\mathrm{ker} \, L_{+}$, and it otherwise mirrors earlier work by Weinstein \cite{Weinstein1985} for local NLS.

2) Due to the long-range behavior of the potential term $\Vinf$, we expect $\Hinf$ to have infinitely many non-zero eigenvalues in $(-1,1)$. Such eigenvalues play no role in our analysis, though.

3) Since $\Hinf$ is not self-adjoint, one has to be careful about what is meant by $\sigma_{\mathrm{ess}}(\Hinf)$. However, by adapting the arguments in \cite{Hundertmark+Lee2007} and using the special matrix structure of $\Hinf$, we see that $\sigma_{\mathrm{ess}}(\Hinf) = \sigma(\Hinf) \setminus \sigma_{\mathrm{disc}}(\Hinf)$ which is a well-known fact for self-adjoint operators. Recall that the discrete spectrum $\sigma_{\mathrm{disc}}(\Hinf)$ is the set of all isolated $\lambda \in \sigma(\Hinf)$ with finite algebraic mutliplicity.  

 }
\end{remarks*}

Since $\Qt \to \Qinf$ in $H^1$ as $t \to \infty$, standard perturbation theory allows us to conclude the following for $\Ht$:

\begin{lemma}\label{lem:H_t} Let $k >0$ and choose $T_0 = T_0(k) > 0$ sufficiently large. Then the following properties hold.
\begin{enumerate}
\item[(i)] For any $t \in [T_0,\infty)$, the essential spectrum is $\sigma_{\mathrm{ess}}(\Ht) = (-\infty, -1] \cup [1,\infty)$.
\item[(ii)] There exists $c > 0$ such that the projections
\[
\Prt = \frac{1}{2 \pi i} \oint_{|z| = c} (z - \Ht)^{-1} \, dz
\]
exist for all $t \in [T_0,\infty)$. Moreover, we have 
\[ \lim_{t \to \infty} \| \Prt - \Prinf \|_{H^1_x \to H^1_x} = 0, \]
where $\Prinf$ denotes the projection onto the generalized null-space $\mathcal{N}$ of $\Hinf$.
\end{enumerate}
\end{lemma}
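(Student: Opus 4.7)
The plan is to treat this as a standard perturbation theory statement grounded on the convergence $\Qt \to \Qinf$ in $H^1_{\mathrm{rad}}(\RR^4)$ from Proposition \ref{thm:Q}, together with the uniform exponential decay of the modified ground states proved in Section \ref{sec:Qt}. Write $\Ht = \Hinf + R^{(t)}$ with
\[
R^{(t)} = \bm \Vt - \Vinf + \Wt - \Winf & \Wt - \Winf \\ -(\Wt - \Winf) & -(\Vt - \Vinf) - (\Wt - \Winf) \endm.
\]
The key analytic input is that both pieces $\Vt - \Vinf$ and $\Wt - \Winf$ tend to zero in the operator norm $H^1_x \to L^2_x$ as $t \to \infty$. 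This splits into two effects: the pointwise convergence $\phi(t^{-k}|x|^k) \to 1$ together with the uniform bound $|\phi| \leq C$, and the $H^1$-convergence $\Qt \to \Qinf$; combining these through Hardy-Littlewood-Sobolev and the uniform exponential decay of $\Qt$ (dominated convergence on convolutions with $|x|^{-2}$) gives the claimed operator convergence. The same type of estimates show that $R^{(t)}$ and $\Hinf - \mathrm{diag}(\DD - 1, -\DD + 1)$ are themselves $\Hinf$-compact (in fact, compact from $H^1_x$ to $L^2_x$ thanks to the decay of the potentials and of $\Qt$, which turns multiplication by the corresponding functions into a compact operator via Rellich-Kondrachov).

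For part (i), I would apply Weyl's theorem on invariance of the essential spectrum under relatively compact perturbations. The matrix structure of $\Ht$ allows one to use the adaptation in \cite{Hundertmark+Lee2007} as already cited in the remark after Lemma \ref{lem:H_inf}, so that one may legitimately talk about $\sigma_{\mathrm{ess}}(\Ht) = \sigma(\Ht) \setminus \sigma_{\mathrm{disc}}(\Ht)$ and identify it with the essential spectrum of the free operator $\mathrm{diag}(\DD-1,-\DD+1)$ which is exactly $(-\infty,-1]\cup[1,\infty)$. Since the compactness argument applies uniformly in $t \in [T_0,\infty)$ (the potentials involved are dominated by a fixed integrable/exponentially decaying function), this gives (i) for all $t \geq T_0$ once $T_0$ is large enough.

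For part (ii), Lemma \ref{lem:H_inf} furnishes that $0$ is an isolated point of $\sigma(\Hinf)$ of finite algebraic multiplicity (the generalized null-space is $4$-dimensional), with the nearest part of the essential spectrum at distance $1$. Choose $c > 0$ small enough that the circle $\{|z| = c\}$ lies in $\rho(\Hinf)$ and encloses no discrete eigenvalue of $\Hinf$ other than $0$. Then $M := \sup_{|z|=c} \|(z - \Hinf)^{-1}\|_{H^1_x \to H^1_x} < \infty$, and from the convergence $\|R^{(t)}\|_{H^1_x \to H^1_x} \to 0$ (a slight upgrade of the $H^1_x \to L^2_x$ estimate, obtained by commuting $\nabla$ through the convolution potentials and using $\Qt \to \Qinf$ in $H^1_x$) the second resolvent identity
\[
(z - \Ht)^{-1} - (z - \Hinf)^{-1} = (z - \Ht)^{-1} R^{(t)} (z - \Hinf)^{-1}
\]
yields, for $t$ large, $\sup_{|z|=c}\|(z-\Ht)^{-1} - (z-\Hinf)^{-1}\|_{H^1_x \to H^1_x} \to 0$. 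Contour integration then gives $\|\Prt - \Prinf\|_{H^1_x \to H^1_x} \to 0$, proving (ii). The main obstacle I anticipate is the operator-norm convergence $R^{(t)} \to 0$ at the level of $H^1_x$ rather than just $L^2_x$, which forces one to carry out the convolution-with-$|x|^{-2}$ estimates on both $\epsilon$ and $\nabla \epsilon$ and to use the uniform $H^1$-decay of $\Qt - \Qinf$ (together with the additional factor $\phi(t^{-k}|x|^k) - 1$, which is controlled by the hypothesis $|\phi'|\langle r\rangle \lesssim 1$). Everything else is a standard application of Riesz projection perturbation theory.
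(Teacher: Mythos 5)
Your proposal is correct and follows essentially the same route as the paper: part (i) via Weyl-type invariance of the essential spectrum under the (relatively) compact potential and Hilbert--Schmidt nonlocal terms, exactly as in the proof of Lemma \ref{lem:H_inf}, and part (ii) via the operator-norm smallness of $\Hinf - \Ht$ (from $\Qt \to \Qinf$ in $H^1$) combined with the second resolvent identity and contour integration for the Riesz projections. The paper states these steps more tersely ("it is easy to see", "straightforward to verify"), but the details you supply, including the upgrade from $L^2_x \to L^2_x$ to $H^1_x \to H^1_x$ operator convergence, are precisely what is intended.
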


\begin{remark*} {\em
The proof of Lemma \ref{lem:H_t} is also relegated to Section \ref{sec:spectral} below.
  }
\end{remark*}


\subsection{Setting up the Iteration Scheme for $\epsilon$}


Let us now turn to the construction of $\epsilon$, as claimed in Proposition \ref{prop:epsilon}.\\
To this end, we decompose the source term $F(t,x,\epsilon)$ in (\ref{eq:eps}) into a root part as well as a ``non-root''\footnote{We use this somewhat awkward terminology rather than the more customary ``dispersive" on account of the presence of real eigenvalues in the spectral gap which prevent dispersive behavior of the linear evolution, even when projecting away the root modes.} part, both with respect to $\Hinf$. That is, 
\begin{equation}
F(t,x,\epsilon)=\sum_{i=1}^{4}b_{i}(t)\phi_{i}(x)+ \Pinf F(t,x,\epsilon),
\end{equation}
where $\{ \phi_i \}_{i=1}^4$ span the generalized null-space of $\Hinf$ (see Lemma \ref{lem:H_inf}), and $\Pinf$ is given by
\begin{equation}
\Pinf = 1 - \Prinf .
\end{equation}
Here $\Prinf$ denotes (as before) the projection onto the generalized null-space of $\Hinf$. Clearly, we have 
\begin{equation} \label{eq:modul}
\la \Pinf F(t,x,\epsilon), \psi_{i}(x)\ra =0,\quad \mbox{for $i=1,2,3,4$,}
\end{equation}
Let us reformulate the latter statement in terms of the dual root modes $\{ \psi_{i} \}_{i=1}^4$ which generate the generalized null-space associated with the adjoint operator $(\calH^{(\infty)})^{*}$. These modes are given by 
\begin{equation}
\psi_{1} =\bm Q^{(\infty)}\\ Q^{(\infty)}\endm,\,\psi_{2} =\bm i(2Q^{(\infty)}+x\cdot \nabla Q^{(\infty)}) \\ -i(2Q^{(\infty)}+x\cdot \nabla Q^{(\infty)})\endm, 
\end{equation}
\begin{equation}
 \psi_{3} =\bm |x|^{2}Q^{(\infty)}\\ |x|^{2}Q^{(\infty)}\endm,\,\psi_{4} =\bm i\rho \\ -i\rho \endm,
\end{equation}
with $\rho$ from Lemma \ref{lem:H_inf}. Then \eqref{eq:modul} reads
\begin{equation}\label{root1}
2b_{4}\la \rho, Q^{(\infty)}\ra =\la F(t,x,\epsilon), \psi_{1}(x)\ra 
\end{equation}
\begin{equation}\label{root2}
2b_{2}\la |x|^{2}Q^{(\infty)}, Q^{(\infty)}\ra =\la F(t,x,\epsilon), \psi_{2}(x)\ra 
\end{equation}
\begin{equation}\label{root3}
-2b_{3}\la |x|^{2}Q^{(\infty)}, Q^{(\infty)}\ra +2b_{4}\la |x|^{2}Q^{(\infty)}, \rho\ra = \la F(t,x,\epsilon), \psi_{3}(x)\ra 
\end{equation}
\begin{equation}\label{root4}
-2b_{1}\la \rho, Q^{(\infty)}\ra -2b_{3}\la |x|^{2}Q^{(\infty)}, \rho\ra =\la F(t,x,\epsilon), \psi_{4}(x)\ra 
\end{equation}
A calculation shows the essential fact that the coefficients of the $b_{i}$ are all positive numbers. Note that all the numbers on the right are purely imaginary. Also, this linear system for the $b_{i}$ is non-singular, so that we can solve for each $b_{i}$ when the right-hand side is given. \\

Now we set up an iteration scheme to solve \eqref{eq:eps}, where the zeroth iterate is $\epsilon_{0}(t,x)=0$.  Assume now we have constructed the $\epsilon_q$; then we define the next iterate $\epsilon_{q+1}$ via a nested iteration procedure as follows: As before, decompose the source term as
\begin{equation}
F(t,x,\epsilon_{q})=\sum_{i=1}^{4}b_{i,q}(t)\phi_{i}(x) + \Pinf F(t,x,\epsilon_{q}) .
\end{equation}
Then we need to solve 
\begin{equation}
\label{eps}
i\partial_{t}\bm\epsilon_{q+1}\\ \overline{\epsilon_{q+1}}\endm+\calH^{(t)}\bm\epsilon_{q+1}\\ \overline{\epsilon_{q+1}}\endm=\sum_{i=1}^{4}b_{i,q}(t)\phi_{i}(x)+ \Pinf F(t,x,\epsilon_{q}) ,
\end{equation}
which can be done via a sequence of approximate solutions as follows. First, define $\epsilon_{q+1}^{1}$ as a sum of terms according to
\begin{equation}
\bm\epsilon_{q+1}^{1}\\ \overline{\epsilon_{q+1}^{1}}\endm=\sum_{i=1}^{4}a_{i,q}^{1}(t) \phi_{i}(x) +\bm\tilde{\epsilon}^{1}_{q+1}\\ \overline{\tilde{\epsilon}^{1}_{q+1}}\endm  .
\end{equation}
Here we define the {\bf Root Part}
\begin{equation}
 \sum_{i=1}^{4}a_{i,q}^{1}(t) \phi_{i}(x) 
\end{equation}
and the {\bf Non-root Part} 
\begin{equation}
\bm\tilde{\epsilon}^{1}_{q+1}\\ \overline{\tilde{\epsilon}^{1}_{q+1}}\endm 
\end{equation}
as follows.

\subsection*{Definition of Root Part} Let $\{ a_{i,q}^1(t)\}_{i=1}^4$ be the solutions vanishing at infinity of the following coupled system of ODE's (i.\,e.~the modulation equations): 
\begin{equation}\label{modulation1}
\dot{a}_{1,q}^{1}-2a_{2,q}^{1}=\frac{b_{1,q}}{i} , \quad \dot{a}_{2,q}-4a_{3,q}=\frac{b_{2,q}}{i} ,
\end{equation}
\begin{equation}\label{modulation2}
\dot{a}_{3,q}+a_{4,k}=\frac{b_{3,q}}{i}, \quad  \dot{a}_{4,q}=\frac{b_{4,q}}{i} .
\end{equation}
This choice is easily seen to imply that
\begin{align*}
(i\partial_{t}+\calH^{(t)})\sum_{i=1}^{4}a_{i,q}^{1}(t) \phi_{i}(x) & =(\calH^{(t)}-\calH^{(\infty)})\sum_{i=1}^{4}a_{i,q}^{1}(t) \phi_{i}(x) + \sum_{i=1}^{4}b_{i,q}(t)\phi_{i}(x) .
\end{align*}

\subsection*{Definition of Non-root Part} Let $\Prt$ be the projections given by Lemma \ref{lem:H_t} and put $\Pt = 1 -  \Prt$. Next, we define $\bm\tilde{\epsilon}^{1}_{q+1}\\ \overline{\tilde{\epsilon}^{1}_{q+1}}\endm $ as the solution of the linear inhomogeneous problem 
\begin{equation}\label{auxiliary}
(i\partial_{s}+\calH^{(t)})\bm\tilde{\epsilon}^{1,(t)}_{q+1}(s)\\ \overline{\tilde{\epsilon}^{1,(t)}_{q+1}}(s)\endm 
= \Pt F(s,x,\epsilon_{q}) , \quad \mbox{for $s \geq t$},
\end{equation}
such that $\tilde{\epsilon}^{1,(t)}_{q+1}(s) \to 0$ as $s \to +\infty$, evaluated at time $s=t$. That is, we set
\begin{equation}
\bm\tilde{\epsilon}^{1}_{q+1}\\ \overline{\tilde{\epsilon}^{1}_{q+1}}\endm =\bm\tilde{\epsilon}^{1,(t)}_{q+1}(t)\\ \overline{\tilde{\epsilon}^{1,(t)}_{q+1}}(t)\endm  .
\end{equation}
That $\tilde{\epsilon}^{(1,t)}_{q+1}(s)$ indeed exists will follow from the proof of Proposition \ref{prop:subiteration} below. It is important to note that we treat the variable $t$ in \eqref{auxiliary} as a fixed parameter, while the time variable is denoted by $s$. Furthermore, we note
\begin{equation}
(i\partial_{t}+\calH^{(t)})\bm\tilde{\epsilon}^{1}_{q+1}\\ \overline{\tilde{\epsilon}^{1}_{q+1}}\endm = \Pt F(s,x,\epsilon_{q})+i\partial_{t}\bm\tilde{\epsilon}^{1,(t)}_{q+1}(s)\\  \overline{\tilde{\epsilon}^{1,(t)}_{q+1}}(s)\endm \Big |_{s=t}
\end{equation}
Combining now the definitions of the root and non-root part,  we deduce that 
 \begin{equation}\nonumber\begin{split}
 (i\partial_{t}+\calH^{(t)})\bm\epsilon_{q+1}^{1}\\ \overline{\epsilon_{q+1}^{1}}\endm
 & =F(t,x,\epsilon_{q})+(P^{(t)}-P^{\infty})F(t,x,\epsilon_{q})+i\partial_{t}\bm\tilde{\epsilon}^{1,(t)}_{q+1}(s)\\ \overline{\tilde{\epsilon}^{1,(t)}_{q+1}}(s)\endm \Big |_{s=t}\\&\quad +(\calH^{(t)}-\calH^{(\infty)})\sum_{i=1}^{4}a_{i,q}^{1}(t) \phi_{i}(x)\\
 & =:F(t,x,\epsilon_{q})+\text{error}_{q}^{1} .
 \end{split}\end{equation}
 Then the higher iterates $\bm\epsilon_{q+1}^{l}\\ \overline{\epsilon_{q+1}^{l}}\endm$, for $l\geq 2$, are defined inductively as follows: 
 \begin{equation}\label{eps_k^l}
  (i\partial_{t}+\calH^{(t)}) \left [\bm\epsilon_{q+1}^{l+1}\\ \overline{\epsilon_{q+1}^{l+1}}\endm- \bm\epsilon_{q+1}^{l}\\ \overline{\epsilon_{q+1}^{l}}\endm \right ]=\text{error}_{q}^{l}+\text{error}_{q}^{l+1} .
  \end{equation}
This completes the definition of our iteration scheme. 
  
\begin{remark*} {\em 
The term $\bm\epsilon_{q+1}^{l+1}\\ \overline{\epsilon_{q+1}^{l+1}}\endm- \bm\epsilon_{q+1}^{l}\\ \overline{\epsilon_{q+1}^{l}}\endm$ is constructed from $\text{error}_{q}^{l}$ just like $\bm\epsilon_{q+1}^{1}\\ \overline{\epsilon_{q+1}^{1}}\endm$ was constructed from $F(t,x,\epsilon_{q})$. }
\end{remark*}


\subsection{Construction of  $\epsilon_{q+1}$ under a bootstrap assumption}


We now construct $\epsilon_{q+1}$ form $\epsilon_{q}$ under a bootstrapping assumption.

\begin{prop}\label{prop:iteration}
Let $k \geq 5$ and choose $T_0 > 0$ sufficiently large. If 
\be
\label{estboot}
||F(t,x,\epsilon_{q})||_{H_{x}^{1}}\lesssim t^{-k},\quad \mbox{for $t \geq T_0$},
\ee
then equation \eqref{eps} admits a solution $\bm \epsilon_{q+1}\\ \overline{\epsilon_{q+1}}\endm$ satisfying 
\[
\big \| \bm \epsilon_{q+1}\\ \overline{\epsilon_{q+1}}\endm \big \|_{H_{x}^{1}}\lesssim t^{-k+4} , \quad \mbox{for $t \geq T_0$}.
\]
\end{prop}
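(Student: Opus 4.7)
My plan is to construct $\epsilon_{q+1}$ as the limit of the telescoping sum $\epsilon_{q+1}^{1}+\sum_{l\ge1}(\epsilon_{q+1}^{l+1}-\epsilon_{q+1}^{l})$, and to bound each summand by $T_{0}^{-l\alpha}\,t^{-k+4}$ for some $\alpha>0$. The geometric decay in $l$ will come from the fact that the errors $\mathrm{error}_{q}^{l}$ are driven by differences $\Pt-\Pinf$, $\Ht-\Hinf$, or $\partial_{t}\Ht$, each of which is $O(t^{-k})$ or smaller by Proposition~\ref{thm:Q}.

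\noindent\textbf{Control of $\epsilon_{q+1}^{1}$.} First, I would pair $F(t,x,\epsilon_{q})$ with the dual modes $\{\psi_{i}\}_{i=1}^{4}$; since the coefficient matrix in \eqref{root1}--\eqref{root4} is non-singular, the bound $\|F\|_{H^{1}_{x}}\lesssim t^{-k}$ implies $|b_{i,q}(t)|\lesssim t^{-k}$ for each $i$. Integrating the triangular modulation ODEs \eqref{modulation1}--\eqref{modulation2} backward from $s=+\infty$ then produces the cascade
\[
|a_{4,q}^{1}(t)|\lesssim t^{-k+1},\ |a_{3,q}^{1}(t)|\lesssim t^{-k+2},\ |a_{2,q}^{1}(t)|\lesssim t^{-k+3},\ |a_{1,q}^{1}(t)|\lesssim t^{-k+4},
\]
so the root part of $\epsilon_{q+1}^{1}$ is $O(t^{-k+4})$ in $H^{1}_{x}$ (each $\phi_{i}$ lies in $H^{1}$ by the exponential decay of $\Qinf$ and $\rho$). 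For the non-root piece, Lemma~\ref{lem:H_t} gives us that $\Pt$ is a spectral projection of $\Ht$ and one expects $e^{i\tau\Ht}\Pt$ to be uniformly $H^{1}$-bounded, since the polynomial growth in Lemma~\ref{lem:H_inf}(iii) is generated entirely by the generalized null space, which $\Pt$ annihilates. Duhamel's formula
\[
\tilde\epsilon^{1}_{q+1}(t)=i\int_{t}^{+\infty}e^{-i(t-s)\Ht}\,\Pt F(s,\cdot,\epsilon_{q})\,ds
\]
then yields $\|\tilde\epsilon^{1}_{q+1}(t)\|_{H^{1}_{x}}\lesssim\int_{t}^{\infty}s^{-k}\,ds\lesssim t^{-k+1}$, and so $\|\epsilon_{q+1}^{1}\|_{H^{1}_{x}}\lesssim t^{-k+4}$.

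\noindent\textbf{Error bounds and iteration.} For $\mathrm{error}_{q}^{1}$ I would control the three pieces separately. The piece $(\Pt-\Pinf)F$ is $O(t^{-2k})$: expanding $\Pt-\Pinf$ via the resolvent identity from the contour formula in Lemma~\ref{lem:H_t} gives $\|\Pt-\Pinf\|_{H^{1}\to H^{1}}\lesssim\|\Ht-\Hinf\|_{H^{1}\to H^{1}}\lesssim t^{-k}$. The piece $(\Ht-\Hinf)\sum_{i}a_{i,q}^{1}\phi_{i}$ is $O(t^{-2k+4})$ in the same way. For $\partial_{t}\tilde\epsilon^{1,(t)}_{q+1}(s)|_{s=t}$ I would differentiate Duhamel in the parameter $t$ using
\[
\partial_{t}e^{-i\tau\Ht}=-i\int_{0}^{\tau}e^{-i(\tau-\sigma)\Ht}(\partial_{t}\Ht)e^{-i\sigma\Ht}\,d\sigma,
\]
and then split via $1=\Pt+\Prt$: on $\Ran(\Pt)$ each exponential factor is $H^{1}$-bounded so the integrand is $O(|\tau|\cdot\|\partial_{t}\Ht\|)=O(|\tau|\cdot t^{-k-1})$, while on the finite-dimensional $\Ran(\Prt)$ the $(1+|\tau|)^{3}$-growth from Lemma~\ref{lem:H_inf}(iii) produces at worst a $(1+|\tau|)^{4}$ factor after the $\sigma$-integration. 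Integrating in $s\in[t,\infty)$ then yields the net bound $O(t^{-2k+5})$; the $\partial_{t}\Pt$ contribution is similar and smaller. For $k\geq 5$, each error term is therefore bounded by $T_{0}^{-\alpha}\,t^{-k}$ for some $\alpha>0$. Substituting $\mathrm{error}_{q}^{l}$ for $F$ in the construction and repeating these estimates level by level, the differences $\epsilon_{q+1}^{l+1}-\epsilon_{q+1}^{l}$ are bounded by $T_{0}^{-l\alpha}\,t^{-k+4}$; the telescoping series then converges in $C^{0}_{t}H^{1}_{x}([T_{0},\infty)\times\RR^{4})$ to a solution of \eqref{eps} with the claimed bound.

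\noindent\textbf{Main obstacle.} The main obstacle is the $\partial_{t}\tilde\epsilon^{1,(t)}_{q+1}(s)|_{s=t}$ term: the parameter derivative inside a Duhamel integral from $+\infty$ produces a $|\tau|$-factor that must be absorbed by the smallness of $\partial_{t}\Ht$ and the decay of $F$. This absorption only works because (i) the non-root evolution is uniformly $H^{1}$-bounded, (ii) the root evolution grows at most polynomially of fixed degree, and (iii) $k\geq 5$ is large enough to make the resulting $s$-integral absolutely convergent — which is precisely the source of the hypothesis $k\geq 5$ in the statement.
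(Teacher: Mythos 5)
Your architecture coincides with the paper's (Lemma \ref{prop:subiteration} applied iteratively in $l$): the triangular modulation system \eqref{modulation1}--\eqref{modulation2} integrated backward from $s=\infty$ yields exactly the cascade $|a_{4,q}^{1}|\lesssim t^{-k+1},\dots,|a_{1,q}^{1}|\lesssim t^{-k+4}$, the non-root piece is the Duhamel integral from infinity, and the inner iteration converges because each error term gains a small factor. However, there are two places where your argument, as written, does not go through. First, the uniform $H^{1}$-boundedness of $e^{i\tau\calH^{(t)}}\Pt$ is the technical heart of the proof and cannot be dismissed with ``one expects \dots\ since the polynomial growth is generated entirely by the generalized null space.'' The operator $\Hinf$ is non-self-adjoint and is expected to have infinitely many real eigenvalues in the gap $(-1,1)$, so boundedness of the evolution off the root space is not automatic. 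The paper proves the key estimate \eqref{ineq:claim} by showing that the quadratic form $\la L_{-}^{(t)}\cdot,\cdot\ra+\la L_{+}^{(t)}\cdot,\cdot\ra$ is conserved by the $\Ht$-flow and is coercive (equivalent to $\|\cdot\|_{H^{1}}^{2}$) on $\mathrm{Ran}\,\Pt$; the coercivity rests on $\mathrm{ker}\,L_{+}=\{0\}$ in the radial sector (Theorem \ref{th:ker}) via a Weinstein-type argument, together with the continuity statements of Theorem \ref{thm:Q} and Lemma \ref{lem:H_t}. You need to supply this (or an equivalent) mechanism; without it the Duhamel bound $\|\tilde{\epsilon}^{1}_{q+1}(t)\|_{H^{1}_{x}}\lesssim t^{-k+1}$ is unjustified.

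Second, your treatment of $\partial_{t}\tilde{\epsilon}^{1,(t)}_{q+1}(s)|_{s=t}$ rests on the operator bound $\|\partial_{t}\calH^{(t)}\|_{H^{1}\to H^{1}}=O(t^{-k-1})$, which is false: the contribution of $\partial_{t}$ hitting $\phi(t^{-k}|\cdot|^{k})$ is only $O(t^{-3})$ as an operator, uniformly in $k$ (Lemma \ref{lem:opbound}); the $O(t^{-k-1})$ decay pertains only to the $\partial_{t}\Qt$ contribution. With the correct bound, your splitting $1=\Pt+\Prt$ \emph{inside} the propagator-derivative formula produces on the finite-dimensional root range a factor $(1+|\tau|)^{4}$, hence $\int_{t}^{\infty}(s-t)^{4}\,t^{-3}s^{-k}\,ds\sim t^{-k+2}$, which exceeds the required $\delta t^{-k}$ and the scheme does not close. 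The paper's device is different: one decomposes $\partial_{t}\tilde{\epsilon}=\Pt\partial_{t}\tilde{\epsilon}+(\partial_{t}\Pt)\tilde{\epsilon}$ and observes that $\Pt\partial_{t}\tilde{\epsilon}^{1,(t)}$ itself solves $(i\partial_{s}+\calH^{(t)})\Pt\partial_{t}\tilde{\epsilon}^{1,(t)}=\Pt\big[\partial_{t}\Pt F-\partial_{t}\calH^{(t)}\tilde{\epsilon}^{1,(t)}\big]$, a non-root evolution with a $\Pt$-projected source to which the uniform $H^{1}$ bound applies with no polynomial loss, yielding $t^{-3}\cdot t^{-k+2}\leq\delta t^{-k}$; the remaining piece $(\partial_{t}\Pt)\tilde{\epsilon}^{1,(t)}$ is $O(t^{-3}\cdot t^{-k+1})$ by Lemma \ref{lem:opbound}. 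This restructuring, not the direct differentiation of the exponential, is what makes the error estimate close.
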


Proposition \ref{prop:iteration} is a direct consequence of the following Lemma used iteratively in $l$ which allows to construct $\bm \epsilon_{q+1}\\ \overline{\epsilon_{q+1}}\endm$ as the limit of the sequence of iterates $\bm \epsilon^{l}_{q+1}\\ \overline{\epsilon^{l}_{q+1}}\endm$.\\

\begin{lemma}
\label{prop:subiteration} Using the notation from above, assume that $||F(t,x,\epsilon_{q})||_{H_{x}^{1}}\lesssim t^{-k}$ for $t\geq T_{0}$. Furthermore, suppose $k\geq 5$ and let $T_{0} >0$ be sufficiently large. Then
  \begin{equation}
  ||\epsilon_{q+1}^{1}(t,x)||_{H_{x}^{1}}\lesssim t^{-k+4},\quad ||\mathrm{error}_{q}^{1}||_{H_{x}^{1}}\lesssim \delta t^{-k}, \quad \mbox{for $t \geq T_0$.}
  \end{equation}
  Here the implied constants are universal (in particular do not depend on $l$), and $\delta>0$ can be chosen arbitrarily small, provided that $T_0 > 0$ is chosen sufficiently large. In particular, the corrections applied to the $\epsilon_{q}^{1}(t,x)$ decay exponentially in $l$, whence these functions converge in the $H^{1}$-topology. 
  \end{lemma}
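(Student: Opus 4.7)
The plan is to estimate the root and non-root halves of $\epsilon^1_{q+1}$ separately, and then to bound the three pieces of $\mathrm{error}^1_q$; the bootstrap $\|F(t,\cdot,\epsilon_q)\|_{H^1_x}\lesssim t^{-k}$ is used freely throughout. For the root part, I first bound the scalar coefficients $b_{i,q}(t)$ by solving the linear system \eqref{root1}--\eqref{root4}: the dual modes $\psi_i$ are all exponentially decaying (using decay of $Q^{(\infty)}$ and $\rho$ from Lemma \ref{lem:H_inf}) and the coefficient matrix is explicitly nonsingular, so Cauchy--Schwarz gives $|b_{i,q}(t)|\lesssim t^{-k}$. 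Solving the modulation ODEs \eqref{modulation1}--\eqref{modulation2} backwards from $+\infty$ in the cascade order $a_{4,q}^1\to a_{3,q}^1\to a_{2,q}^1\to a_{1,q}^1$, each integration adds one power of $t$, yielding $|a_{i,q}^1(t)|\lesssim t^{-k+5-i}$ for $i=1,\ldots,4$. Since the $\phi_i$ are fixed $H^1$ functions, the root contribution to $\|\epsilon^1_{q+1}\|_{H^1_x}$ is at most $t^{-k+4}$.

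For the non-root part I would use the Duhamel representation
\[
\tilde\epsilon^{1,(t)}_{q+1}(s)=i\int_s^{\infty}e^{i(s-\tau)\Ht}\,\Pt F(\tau,\cdot,\epsilon_q)\,d\tau,
\]
which is the unique solution of \eqref{auxiliary} vanishing at $+\infty$. The main ingredient is the polynomial-growth propagator bound of Lemma \ref{lem:H_inf}(iii), extended uniformly to $\Ht$ for $t\geq T_0$ via the perturbation framework of Lemma \ref{lem:H_t}, and promoted from $L^2$ to $H^1$ by a standard energy argument using that $V^{(t)}$ and $W^{(t)}$ are bounded on $H^1$ thanks to the exponential decay of $Q^{(t)}$. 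Evaluating at $s=t$ and using the bootstrap yields $\|\tilde\epsilon^1_{q+1}(t)\|_{H^1_x}\lesssim \int_t^{\infty}(1+(\tau-t)^3)\tau^{-k}\,d\tau\lesssim t^{-k+4}$ for $k\geq 5$, which together with the root part gives the first bound. Two of the three pieces of $\mathrm{error}^1_q$ are then routine: $(\Pt-\Pinf)F$ is bounded by $o(1)\,t^{-k}\leq \delta t^{-k}$ using Lemma \ref{lem:H_t}(ii), while $(\Ht-\Hinf)\sum_i a^1_{i,q}\phi_i$ is handled by splitting $V^{(t)}-V^{(\infty)}$ and $W^{(t)}-W^{(\infty)}$ into a kernel-difference piece (controlled via $|\phi(t^{-k}|x|^k)-1|\lesssim t^{-k}|x|^k$ and the exponential decay of $|Q^{(t)}|^2$) and a profile-difference piece of size $O(\|Q^{(t)}-Q^{(\infty)}\|_{H^1})\lesssim t^{-k}$ by Proposition \ref{thm:Q}; both give $O(t^{-k})$ operators on the exponentially decaying $\phi_i$, and combining with the root-size bound $t^{-k+4}$ yields $O(t^{-2k+4})\leq \delta t^{-k}$ for $k\geq 5$ and $T_0$ large.

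The main obstacle is the hidden parameter-derivative term $i\partial_t\tilde\epsilon^{1,(t)}_{q+1}(s)\big|_{s=t}$. Differentiating the Duhamel formula in the parameter $t$ with $s$ held fixed produces one term in which $\partial_t\Pt$ acts on the forcing and one in which $\partial_t\Ht$ is sandwiched between two propagators. Proposition \ref{thm:Q}(ii) gives $\|\partial_t Q^{(t)}\|_{H^1}\lesssim t^{-k-1}$, and the identity $\partial_t\phi(t^{-k}|x|^k)=-k\,t^{-k-1}|x|^k\phi'(t^{-k}|x|^k)$ combined with exponential decay of $Q^{(t)}$ yields $\|\partial_t\Ht\|_{H^1\to H^1}\lesssim t^{-k-1}$; the same gain for $\partial_t\Pt$ follows by differentiating the Cauchy integral representation of Lemma \ref{lem:H_t}(ii). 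Inserting this extra factor of $t^{-k-1}$ together with the polynomial-growth propagator bound and integrating against the $\tau^{-k}$ forcing gives a contribution of size $t^{-k-1}\cdot t^{-k+C}$ for some fixed $C$ coming only from the accumulated polynomial factors; for $k\geq 5$ this is dominated by $\delta t^{-k}$ once $T_0$ is large, which is precisely where the assumption $k\geq 5$ is needed. The exponential decay in $l$ of the corrections $\epsilon^{l+1}_{q+1}-\epsilon^l_{q+1}$ then follows by iteration: each increment solves the same auxiliary problem with $\mathrm{error}^l_q$ in place of $F$, so each pass gains a further factor of $\delta$, giving geometric $H^1$-convergence and finishing the proof.
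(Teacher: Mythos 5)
Your overall architecture (root coefficients via the modulation ODE cascade, Duhamel for the non-root part, three error pieces) matches the paper, and your root-part and $(\Pt-\Pinf)F$, $(\Ht-\Hinf)\sum a^1_{i,q}\phi_i$ estimates are fine. But there are two linked problems in the non-root/error analysis. First, the claimed bound $\|\partial_t\Ht\|_{H^1_x\to H^1_x}\lesssim t^{-k-1}$ is false. Writing $\partial_t\Vt=I^{(t)}+II^{(t)}$ as in the paper's Lemma \ref{lem:opbound}, the piece $II^{(t)}$ involving $\partial_t\Qt$ does gain $t^{-k-1}$, but the kernel-derivative piece
\[
I^{(t)}=-k\,t^{-k-1}\Big(\frac{\phi'(t^{-k}|\cdot|^k)\,|\cdot|^k}{|\cdot|^2}\ast|\Qt|^2\Big)
\]
is only $O(t^{-3})$: in the regime $|x-y|\sim t$ one has $t^{-k}|x-y|^k\sim 1$, so $\phi'$ is merely $O(1)$ and $t^{-k-1}|x-y|^{k-2}\sim t^{-3}$. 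The exponential decay of $\Qt$ does not help here because the loss sits in the kernel variable $x-y$, not in $y$, and in the error term this operator acts on $\tilde\epsilon^{1,(t)}$, which is not localized. The paper's Lemma \ref{lem:opbound} accordingly only asserts $t^{-3}$ for both $\partial_t\Ht$ and $\partial_t\Prt$.

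Second, you run the non-root Duhamel estimate through the polynomial-growth bound of Lemma \ref{lem:H_inf}(iii), accumulating factors $(1+|s-\tau|^3)$. With the correct $t^{-3}$ bound for $\partial_t\Ht$, the parameter-derivative error term then comes out as roughly $t^{-3}$ times $t^{-k+C}$ with $C\geq 4$ from the accumulated polynomial weights, i.e.\ at best $t^{-k+1}$, which is \emph{not} $\leq\delta t^{-k}$; your argument only closes because of the unjustified $t^{-k-1}$. The paper avoids this entirely: since $\Pt F$ has no root component, the evolution $e^{is\Ht}$ restricted to the range of $\Pt$ is \emph{uniformly bounded} on $H^1$ (no polynomial growth), because the quadratic form $\la L_+^{(t)}\cdot,\cdot\ra+\la L_-^{(t)}\cdot,\cdot\ra$ is conserved by the flow and, by $\ker L_+=\{0\}$ in the radial sector (Theorem \ref{th:ker}) and Weinstein-type coercivity, is equivalent to $\|\cdot\|_{H^1}^2$ on that subspace. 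This gives $\|\tilde\epsilon^{1,(t)}_{q+1}(t)\|_{H^1_x}\lesssim\|F\|_{L^1_{s'}H^1_x}\lesssim t^{-k+1}$, and then the derivative term is $O(t^{-3}\cdot t^{-k+1})\leq\delta t^{-k}$. You should replace the polynomial-growth propagator bound by this conserved coercive quadratic form on the non-root sector; the polynomial growth of Lemma \ref{lem:H_inf}(iii) reflects precisely the Jordan blocks you have already projected away.
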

  
  \begin{proof}  We now prove Lemma \ref{prop:subiteration}. We first show the bound for $\epsilon_{q+1}^{1}(t,x)$, which is split into a root and a non-root part. For the root part, the system \eqref{modulation1}-\eqref{modulation2} as well as \eqref{root1}-\eqref{root4} easily imply that 
  \begin{equation}
  \sum_{i=1}^{4}|a_{i,q}^{1}(t)|\lesssim t^{-k+4} .
  \end{equation}
  Furthermore, since the root modes $\{ \phi_{i}(x) \}_{i=1}^4$ are of exponential decay, we infer
  \begin{equation}
  |(\calH^{(t)}-\calH^{(\infty)})\sum_{i=1}^{4}a_{i,q}^{1}(t)\bm \phi_{i}(x)\\ \overline{\phi_{i}(x)}\endm|\lesssim t^{-k} t^{-k+4}\leq \delta t^{-k},
  \end{equation}
  provided $T_{0}$ is chosen large enough and $t\geq T_{0}$. Next, we consider $\tilde{\epsilon}^{1,(t)}_{q+1}(s)$. In view of \eqref{auxiliary}, we have 
  \begin{equation}
  \bm\tilde{\epsilon}^{1,(t)}_{q+1}(s)\\ \overline{\tilde{\epsilon}^{1,(t)}_{q+1}}(s)\endm 
=\Pt \bm\tilde{\epsilon}^{1,(t)}_{q+1}(s)\\ \overline{\tilde{\epsilon}^{1,(t)}_{q+1}}(s)\endm 
\end{equation}
We shall also use the notation 
\begin{equation}
\Pt \bm\tilde{\epsilon}^{1,(t)}_{q+1}(s)\\ \overline{\tilde{\epsilon}^{1,(t)}_{q+1}}(s)\endm
=:\bm \Pt \tilde{\epsilon}^{1,(t)}_{q+1}(s)\\ \overline{\Pt \tilde{\epsilon}^{1,(t)}_{q+1}}(s)\endm
\end{equation}
and similarly for $\Pinf$. Next, we claim the following estimate to be true: 
\begin{equation} \label{ineq:claim}
|| \bm\tilde{\epsilon}^{1,(t)}_{q+1}(s)\\ \overline{\tilde{\epsilon}^{1,(t)}_{q+1}}(s)\endm||_{H_{x}^{1}}\lesssim ||F(s',x,\epsilon_{q})||_{L_{s'}^{1}H_{x}^{1}([s,\infty)\times\R^{4})} .
\end{equation}
Indeed, recall the definition of $\Hinf$ from (\ref{eq:Hinf}) and let (as before)
\begin{align}
L_{+}  = - \DD + 1 -\big ( \frac{1}{|x|^{2}} \ast |\Qinf|^2 \big )  - 2 \Qinf \big ( \frac{1}{|x|^{2}} \ast (Q^{(\infty)} \cdot ) \big ) ,
\end{align}
and 
\begin{equation}
 L_{-}= -\Delta + 1 - \big ( \frac{1}{|x|^{2}} \ast |\Qinf|^2 \big ).
\end{equation}
 Since $\mathrm{ker} \, L_+ = \{ 0\}$ in the radial sector, by Theorem \ref{th:ker} below, an adaptation of a well-known argument by Weinstein \cite{Weinstein1985} for NLS with local nonlinearities yields the coercivity estimate (in the radial sector):
 \begin{equation}
 \la L_{-} \Pinf \tilde{\epsilon}_{q+1}^{1,(t)},  \Pinf \tilde{\epsilon}_{q+1}^{1,(t)} \ra+\la L_{+} \Pinf \tilde{\epsilon}_{q+1}^{1,(t)},  \Pinf \tilde{\epsilon}_{q+1}^{1,(t)}\ra \gtrsim \| \Pinf \tilde{\epsilon}_{q+1}^{1,(t)} \|_{H_{x}^{1}}^{2} .
 \end{equation}
 Furthermore, by the continuity properties stated in Theorem \ref{thm:Q} and Lemma \ref{lem:H_t},
  \begin{equation} \begin{split}
  &\la L_{-}^{(t)} \Pt \tilde{\epsilon}_{q+1}^{1,(t)},  \Pt \tilde{\epsilon}_{q+1}^{1,(t)}\ra+\la L_{+}^{(t)} \Pt \tilde{\epsilon}_{q+1}^{1,(t)},  \Pt \tilde{\epsilon}_{q+1}^{1,(t)}\ra \nonumber \\
  &= \la L_{-} \Pinf \tilde{\epsilon}_{q+1}^{1,(t)},  \Pinf \tilde{\epsilon}_{q+1}^{1,(t)}\ra+\la L_{+} \Pinf \tilde{\epsilon}_{q+1}^{1,(t)},  \Pinf \tilde{\epsilon}_{q+1}^{1,(t)}\ra+o(\|\tilde{\epsilon}_{q+1}^{1,(t)}\|_{H^1_x}),
  \end{split}\end{equation}
  where in the last line the $o(..)$ means that this quantity vanishes as $t\rightarrow \infty$, and we use the notation
  \begin{equation}
 L_{+}^{(t)}=  -\DD + 1 - \big ( \frac{\phi(t^{-k} |\cdot|^k)}{|\cdot|^2} \ast |\Qt|^2 \big ) - 2 \Qt \big ( \frac{\phi(t^{-k} |\cdot|^k)}{|\cdot|^2} \ast (\Qt \cdot ) \big ),
 \end{equation}
 \begin{equation}
 L_{-}^{(t)}= -\DD + 1 - \big ( \frac{\phi(t^{-k} |\cdot|^k)}{|\cdot|^2} \ast |\Qt|^2 \big ).
  \end{equation}
 Finally, we note that
 \begin{equation}
 \Pt \bm\tilde{\epsilon}^{1,(t)}_{q+1}(s)\\ \overline{\tilde{\epsilon}^{1,(t)}_{q+1}}(s)\endm 
 =\int_{s}^{\infty}e^{i(s-s')\calH^{(t)}}\Pt F(s',x,\epsilon_{q}) \, ds',
 \end{equation}
and the quadratic form $\la L_{-}^{(t)}.,.\ra +\la L_{+}^{(t)}.,.\ra$ is invariant under the evolution associated with $\calH^{(t)}$, whence the claimed estimate (\ref{ineq:claim}) follows. By assumption on the forcing term, we thus have shown that 
\begin{equation}
\big \|  \bm\tilde{\epsilon}^{1,(t)}_{q+1}(t)\\ \overline{\tilde{\epsilon}^{1,(t)}_{q+1}}(t)\endm \big \|_{H_{x}^{1}}\lesssim t^{-k+1} ,
\end{equation}
whence the first estimate of Lemma \ref{prop:subiteration} follows. 

\medskip
Next, consider the error due to $\bm\tilde{\epsilon}^{1,(t)}_{q+1}(t)\\ \overline{\tilde{\epsilon}^{1,(t)}_{q+1}}(t)\endm $, which equals
\begin{equation}
[\Pt-\Pinf]F(t,x,\epsilon_{q})+i\partial_{t}\bm\tilde{\epsilon}^{1,(t)}_{q+1}(s)\\ \overline{\tilde{\epsilon}^{1,(t)}_{q+1}}(s)\endm \Big |_{s=t}
\end{equation}
By Lemma \ref{lem:H_t}, we deduce
\begin{equation}
\| [\Pt-\Pinf ]F(t,x,\epsilon_{q})\|_{H^1_x} \lesssim \delta t^{-k},
\end{equation}
for $t \geq T_0$, provided that $T_{0}$ is large enough. Finally, we need to estimate the time derivative. Note that 
 \begin{equation*}
 \partial_{t}\bm\tilde{\epsilon}^{1,(t)}_{q+1}(s)\\ \overline{\tilde{\epsilon}^{1,(t)}_{q+1}}(s)\endm \Big |_{s=t}
=\Pt \partial_{t}\bm\tilde{\epsilon}^{1,(t)}_{q+1}(s)\\ \overline{\tilde{\epsilon}^{1,(t)}_{q+1}}(s)\endm \Big |_{s=t}
+(\partial_{t}\Pt)\bm\tilde{\epsilon}^{1,(t)}_{q+1}(s)\\ \overline{\tilde{\epsilon}^{1,(t)}_{q+1}}(s)\endm \Big |_{s=t}
\end{equation*}
To handle the second term, we use the following estimates. 
\begin{lemma} \label{lem:opbound}
For $k \geq 2$ and $T_0 > 0$ sufficiently large, we have
\begin{equation*}
||(\partial_{t} \Pt)||_{H^{1}_x \rightarrow H^{1}_x}\lesssim t^{-3},\quad ||(\partial_{t}\calH^{(t)})||_{H^{1}_x \rightarrow H^{1}_x}\lesssim t^{-3}, \quad \mbox{for $t \geq T_0$}.
\end{equation*}
\end{lemma} 
\begin{proof} 
We start by proving the bound for $\pr_t \Ht$. Recall the definitions of $\Vt$ and $\Wt$ from \eqref{def:VtWt} and observe that 
\begin{equation}
\partial_{t}\calH^{(t)}= \left ( \begin{array}{cc} \pr_t \Vt + \pr_t \Wt & \pr_t \Wt \\ -\pr_t \Wt & -\pr_t \Vt - \pr_t \Wt \end{array} \right ) .
\end{equation}
For simplicity, let us consider $\pr_t \Vt$ here, and we remark that $\pr_t \Wt$ is estimated in a similar way. We find
\begin{align*}
\pr_t \Vt & =  -k t^{-k-1} \big ( \frac{\phi'(t^{-k} |\cdot|^k) |\cdot|^k}{|\cdot|^2} \ast |\Qt|^2 \big ) + 2 \big ( \frac{\phi(t^{-k} |\cdot|^k)}{|\cdot|^2} \ast (\Qt \pr_t \Qt) \big ) \nonumber \\
& =: I^{(t)}+ II^{(t)}.
\end{align*}
Next, we estimate $I^{(t)}$ as follows. Since $|x| |\phi'(x)| \lesssim 1$, it follows that
\begin{equation}
t^{-k-1} \frac{ |\phi'(t^{-k} |x-y|^k)| |x-y|^k}{|x-y|^2} \lesssim t^{-3} .
\end{equation}
Hence, we find
\begin{equation}
\| I^{(t)} \|_{L^\infty_x} \lesssim t^{-3} \| \Qt \|_{L^2_x}^2 \lesssim t^{-3},
\end{equation}
thanks to the uniform bound $\| \Qt \|_{H^1_x} \lesssim 1$ implied by Theorem \ref{th:Qt}. Similarly, we obtain
\begin{equation}
\| \nabla I^{(t)} \|_{L^\infty_x} \lesssim t^{-3} \| \nabla |\Qt|^2 \|_{L^1_x} \lesssim t^{-3} \| \Qt \|_{L^2_x} \| \| \nabla \Qt \|_{L^2_x} \lesssim t^{-3}.
\end{equation}
Thus we have the operator bound $\| I^{(t)} \|_{H^1_x \to H^1_x} \lesssim t^{-3}$ for all $t$ sufficiently large.

As for proving such a bound for $II^{(t)}$, we argue as follows. Using $|\phi(x)| \lesssim 1$ and the Schwarz inequality, we deduce
\begin{align*} 
\| II^{(t)} \|_{L^\infty_x} & \lesssim \sup_{z \in \RR^4} \int_{\RR^4} \frac{|\Qt(y)| |\pr_t \Qt(y)|}{|z-y|^2} \, dy \nonumber \\
& \lesssim \sup_{z \in \RR^4} \big ( \big \| |\cdot - z|^{-1} \Qt \big \|_{L^2_x} \big \| |\cdot - z|^{-1} \pr_t \Qt \big \|_{L^2_x} \big ) \nonumber \\
& \lesssim \| \nabla \Qt \|_{L^2_x} \| \nabla \pr_t \Qt \|_{L^2_x} \lesssim t^{-k-1} .
\end{align*}
where we also used $\| |\cdot - z|^2 f \|_{L^2_x} \lesssim \| \nabla f \|_{L^2_x}$ for any $z \in \RR^4$, which follows from Hardy's inequality and translational invariance. Also, in the last step, we used the estimate of Lemma \ref{lem:tdecay} below. Next, we derive the following estimate:
\begin{align*}
\| \nabla II^{(t)} \|_{L^4_x} & \lesssim \big \| \frac{1}{|x|^2} \big \|_{L^{(2,\infty)}_x} \big \| \nabla (\Qt \pr_t \Qt ) \big \|_{L^{4/3}_x} \nonumber \\
& \lesssim \| \Qt \|_{L^4_x} \| \nabla \pr_t \Qt \|_{L^2_x} + \| \nabla \Qt \|_{L^2_x} \| \pr_t \Qt \|_{L^4_x}  \lesssim \| \pr \Qt \|_{H^1_x} \lesssim t^{-k-1} .
\end{align*}
Here we used the weak Young inequality and Sobolev's embedding $\| f \|_{L^4_x} \lesssim \| \nabla f \|_{L^2_x}$ in $\RR^4$, as well as Lemma \ref{lem:tdecay} again. Since $k \geq 2$ by assumption, our estimates show that 
\begin{equation}
\| II^{(t)} f \|_{H^1_x} \lesssim \| II^{(t)} \|_{L^\infty_x} \| \nabla f \|_{L^2_x} + \| \nabla II^{(t)} \|_{L^{4/3}_x} \| f \|_{L^4_x} \lesssim t^{-3} \| f \|_{H^1_x} ,
\end{equation}
which completes the proof of the claim that, for all $t$ sufficiently large,
\begin{equation}
\| \pr_t \Vt \|_{H^1_x \to H^1_x} \lesssim t^{-3} .
\end{equation}
Again, we remark that an analogous estimate can be derived for the nonlocal operator $\pr_t \Wt$ in a similar way. This completes the proof of the second inequality stated in Lemma \ref{lem:opbound}.

It remains to show the first inequality. To this end, we recall that $\Pt = 1  - \Prt$ with $\Prt$ from Lemma \ref{lem:H_t}. This leads to
\begin{equation}
\partial_{t} \Pt =\frac{1}{2\pi i}\oint_{|z|=c}(z-\calH^{(t)})^{-1}(\partial_{t}\calH^{(t)})(z-\calH^{(t)})^{-1} \, dz,
\end{equation}
whence the first inequality of Lemma \ref{lem:opbound} follows from the second.  \end{proof}

Let us now conclude the proof of Lemma \ref{prop:subiteration}. Applying Lemma \ref{lem:opbound}, we see that 
\begin{equation}
||(\partial_{t} \Pt)\bm\tilde{\epsilon}^{1,(t)}_{q+1}(s)\\ \overline{\tilde{\epsilon}^{1,(t)}_{q+1}}(s)\endm|_{s=t}||_{H_{x}^{1}}
\lesssim t^{-2-k}\leq \delta t^{-k},
\end{equation}
provided $t$ is large enough. Finally, we need to estimate 
\begin{equation}
\Pt \partial_{t}\bm\tilde{\epsilon}^{1,(t)}_{q+1}(s)\\ \overline{\tilde{\epsilon}^{1,(t)}_{q+1}}(s)\endm|_{s=t} .
\end{equation}
Here we note the identity 
\begin{equation*}
(i\partial_{s}+\calH^{(t)})\Pt \partial_{t}\bm\tilde{\epsilon}^{1,(t)}_{q+1}(s)\\ \overline{\tilde{\epsilon}^{1,(t)}_{q+1}}(s)\endm
=  \Pt \big [\partial_{t}\Pt F(s,x,\epsilon_{q})-\partial_{t}\calH^{(t)}\bm\tilde{\epsilon}^{1,(t)}_{q+1}(s)\\ \overline{\tilde{\epsilon}^{1,(t)}_{q+1}}(s)\endm \big ] .
\end{equation*}
Applying Lemma \ref{lem:opbound} again, we conclude that 
\begin{equation*}
||\Pt \partial_{t}\bm\tilde{\epsilon}^{1,(t)}_{q+1}(s)\\ \overline{\tilde{\epsilon}^{1,(t)}_{q+1}}(s)\endm||_{H_{x}^{1}}
\lesssim t^{-3}\int_{s}^{\infty}[||F(s',x,\epsilon_{q})||_{H_{x}^{1}}+||\bm\tilde{\epsilon}^{1,(t)}_{q+1}(s')\\ \overline{\tilde{\epsilon}^{1,(t)}_{q+1}}(s')\endm||_{H_{x}^{1}}] \, ds' .
\end{equation*}
Putting $s=t$, we obtain the bound 
\begin{equation}
||\Pt\partial_{t}\bm\tilde{\epsilon}^{1,(t)}_{q+1}(s)\\ \overline{\tilde{\epsilon}^{1,(t)}_{q+1}}(s)\endm \big |_{s=t}||_{H_{x}^{1}}\lesssim t^{-k-1}\leq \delta t^{-k},
\end{equation}
provided that $t$ is large enough. This concludes the proof of Lemma \ref{prop:subiteration}. \end{proof}


\subsection{Control of the nonlinear term}


We now need to derive the bootstrap estimate (\ref{estboot}) by controling the nonlinear terms given by (\ref{eq:F}).

\begin{lemma}\label{lem:F} Assume $||\epsilon||_{H_{x}^{1}}\lesssim t^{-k+4}$ for $t\geq T_{0}$ with $T_{0} > 0$ sufficiently large. Additionally, suppose that $k \geq 8$. Let $F(t,x,\epsilon)$ be given by (\ref{eq:F}), then:
\[
||F(t,x,\epsilon)||_{H_{x}^{1}}\leq t^{-k-1} , \quad \mbox{for $t \geq T_0$}.
\]
\end{lemma}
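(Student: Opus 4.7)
The plan is to decompose $F$ from \eqref{eq:F} as $F = F_1 + F_2 + F_3$, where $F_1 = -i\pr_t \Qt$ is the drift, $F_2 = -\bigl(\tfrac{\phi(t^{-k}|\cdot|^k)}{|\cdot|^2}\ast \Qt(\epsilon+\bar\epsilon)\bigr)\epsilon$ is the mixed quadratic interaction, and $F_3 = -\bigl(\tfrac{\phi(t^{-k}|\cdot|^k)}{|\cdot|^2}\ast |\epsilon|^2\bigr)(\Qt+\epsilon)$ collects the higher-order self-interaction. The three pieces are estimated separately in $H^1_x$. For $F_1$, the bound $\|F_1\|_{H^1_x} = \|\pr_t \Qt\|_{H^1_x} \lesssim t^{-k-1}$ is given directly by \eqref{boundpat}, matching the target.

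For $F_2$ and $F_3$ the key tool is a Hartree-type bilinear estimate in 4-d: via the Hardy--Littlewood--Sobolev inequality (convolution against $|x|^{-2}$ maps $L^{4/3}(\RR^4)$ into $L^4(\RR^4)$), the Sobolev embedding $H^1(\RR^4) \hookrightarrow L^4(\RR^4)$, and the pointwise bound $|\phi(t^{-k}|x|^k)/|x|^2| \lesssim |x|^{-2}$ implied by $|\phi|\le C$, one obtains the trilinear estimate
\begin{equation*}
\Bigl\| \Bigl(\tfrac{\phi(t^{-k}|\cdot|^k)}{|\cdot|^2}\ast (fg)\Bigr) h \Bigr\|_{H^1_x} \lesssim \|f\|_{H^1_x}\|g\|_{H^1_x}\|h\|_{H^1_x}.
\end{equation*}
For the gradient one applies the Leibniz rule: when $\nabla$ falls on a factor outside the convolution the preceding estimate applies directly, while when it lands inside one commutes $\nabla(K\ast g) = K\ast\nabla g$ to shift it onto $\Qt\epsilon$ or $|\epsilon|^2$, after which HLS and Sobolev close the argument.

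Applying this to $F_2$ yields $\|F_2\|_{H^1_x}\lesssim \|\Qt\|_{H^1_x}\|\epsilon\|_{H^1_x}^2$, and to $F_3$ yields $\|F_3\|_{H^1_x}\lesssim \|\epsilon\|_{H^1_x}^2(\|\Qt\|_{H^1_x}+\|\epsilon\|_{H^1_x})$. Using Proposition \ref{thm:Q} to get the uniform bound $\|\Qt\|_{H^1_x}\lesssim 1$ for $t\geq T_0$, and the hypothesis $\|\epsilon\|_{H^1_x}\lesssim t^{-k+4}$, both quantities are bounded by a constant times $t^{-2k+8}$. For $k$ large (the assumption $k\geq 8$, together with choosing $T_0$ sufficiently large to absorb all implied constants into the asserted inequality) this is dominated by $t^{-k-1}$, yielding the desired bound on $\|F\|_{H^1_x}$.

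The main obstacle is the bookkeeping when $\nabla$ is commuted with the modified convolution kernel $\tfrac{\phi(t^{-k}|\cdot|^k)}{|\cdot|^2}$: naive differentiation produces a $|x|^{-3}$-type singularity that is borderline for HLS in dimension $4$. This is overcome either by integrating by parts to move $\nabla$ onto the second convolution factor, or by exploiting that the $\phi'$-contribution from the chain rule carries a prefactor $t^{-k}|x|^{k-1}$ (using the hypothesis $\langle r\rangle|\phi'(r)|\leq C$) which softens the singularity, in the same spirit as the estimate for $\pr_t \Vt$ proved in Lemma \ref{lem:opbound}. Once this commutation is carried out, the rest is a routine chain of HLS and Sobolev bounds.
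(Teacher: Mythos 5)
Your proof follows essentially the same route as the paper's: the drift term $-i\pr_t\Qt$ is split off and controlled by the $t^{-k-1}$ decay of Lemma \ref{lem:tdecay}, and the Hartree nonlinearities are handled by the same combination of Hardy, Hardy--Littlewood--Sobolev and Sobolev estimates, with the gradient distributed by the Leibniz rule. Two remarks. First, the ``main obstacle'' you describe is not actually there: the kernel never needs to be differentiated, since for a translation-invariant kernel one has $\nabla(K\ast g)=K\ast\nabla g$, so the derivative can always be placed on $\Qt\epsilon$ or $|\epsilon|^2$; this is exactly what the paper does, and your first proposed fix (moving $\nabla$ onto the second convolution factor) is the whole story --- the detour through $\phi'$ is unnecessary. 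Second, your final power count has a genuine, if minor, slip: the terms quadratic in $\epsilon$ give $t^{-2k+8}$, and $t^{-2k+8}\le t^{-k-1}$ requires $k\ge 9$, not $k\ge 8$; at $k=8$ one is comparing $t^{-8}$ against $t^{-9}$, and no choice of $T_0$ can absorb a positive power of $t$ into a constant. This borderline case is glossed over in the paper as well, which carries out the arithmetic only for the cubic term (where $t^{-3k+12}\le t^{-k-1}$ does hold for $k\ge 8$) and dismisses the rest as ``similar.'' The clean fix is either to assume $k\ge 9$ or to observe that the bootstrap of Proposition \ref{prop:iteration} only needs the weaker bound $\|F(t,\cdot,\epsilon)\|_{H^1_x}\lesssim t^{-k}$, for which $k\ge 8$ suffices.
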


\begin{proof}[Proof of Lemma \ref{lem:F}]  Consider, e.\,g., the term
\begin{equation}
A :=  \big ( \frac{\phi(t^{-k}|\cdot |^{k})}{|\cdot|^{2}} \ast |\epsilon|^{2} \big ) \epsilon .
\end{equation}
Using the Hardy's inequality $|x|^{-2} \lesssim (-\Delta)$ and H\"older's inequality, we conclude 
\begin{equation}
\| A \|_{L_{x}^{2}}\lesssim \| \epsilon \|_{\dot{H}^{1}_x}^{2}||\epsilon||_{L_{x}^{2}}\lesssim t^{-3k+12} \leq t^{-k-1},
\end{equation}
for $t$ sufficiently large. Next, we have 
\begin{equation}
\nabla A
= \big ( \frac{\phi(t^{-k}|\cdot |^{k})}{|\cdot|^{2}} \ast |\epsilon|^{2} \big ) \nabla \epsilon + \big ( \frac{\phi(t^{-k}|\cdot |^{k})}{|\cdot|^{2}} \ast (\nabla |\epsilon|^{2}) \big ) \epsilon .
\end{equation}
For the first term, we use Hardy's inequality again to conclude 
\begin{equation}
\big \| \big ( \frac{\phi(t^{-k}|\cdot |^{k})}{|\cdot|^{2}} \ast |\epsilon|^{2} \big ) \nabla \epsilon \big \|_{L_{x}^{2}}
\lesssim \| \epsilon \|_{\dot{H}_{x}^{1}}^{3} \ll t^{-k-1},
\end{equation}
for $t$ large. For the second term, H\"older's inequality and the Hardy-Littlewood -Sobolev inequality give us
\begin{equation}
\big \|  \big ( \frac{\phi(t^{-k}|\cdot |^{k})}{|\cdot|^{2}} \ast (\nabla |\epsilon|^{2}) \big ) \epsilon \big \|_{L_x^2} 
\lesssim \| \nabla (\epsilon\overline{\epsilon}) \| _{L_{x}^{4/3}} \| \epsilon \|_{L_{x}^{4}}
\lesssim \| \epsilon \|_{\dot{H}^{1}}^{2} \| \epsilon \|_{L_{x}^{2}} \ll t^{-k-1},
\end{equation}
for $t$ large. The remaining nonlinear terms of $F(t,x,\epsilon)$  in (\ref{eq:F}) can be estimated similarly. As for the $\pr_t \Qt$-term in $F(t,x,\epsilon)$, we note that $\| \pr_t \Qt \|_{H_x^1} \lesssim t^{-k-1}$ holds, by Lemma \ref{lem:tdecay} below. \end{proof}


\subsection{Completing the Proof of Proposition \ref{prop:epsilon}} 


We are now in position to conclude the proof of Proposition  \ref{prop:epsilon}. Assume $k \geq 8$ and choose $T_0 > 0$ sufficiently large. Proposition \ref{prop:iteration} and Lemma \ref{lem:F} imply the a-priori bounds on the iterates
\begin{equation}
||\epsilon_{q}(t,x)||_{H_{x}^{1}}\lesssim t^{-k+4} , \quad \mbox{for $t \geq T_0$},
\end{equation}
Passing to the difference equations (which eliminates the source terms $-i\partial_{t}Q^{(t)}$) and arguing identically to the above, one shows that $\{ \epsilon_{q} \}_{q = 0}^\infty$ forms a Cauchy sequence in $C^{0}_t H^1_x([T_{0}, \infty) \times \RR^4)$. Moreover, differentiating the equation and again recycling the same estimates, smoothness of the limit follows. Define the limit 
\begin{equation}
\epsilon := \lim_{q\rightarrow \infty}\epsilon_{q} .
\end{equation}
Next, we claim the bound 
\begin{equation}
|||x|\epsilon(t)||_{L_{x}^{2}}\lesssim t^{-k+5} , \quad \mbox{for $t \geq T_0$}.
\end{equation}
Indeed, it suffices to prove this bound for each iterate $\epsilon_{q}$. However, in view of \eqref{eps}, we have 
\begin{equation}
(i\partial_{t}+\DD-1)[x\epsilon_{q+1}(t,x)]=2\nabla_{x}\epsilon_{q+1}- \big( \frac{\phi(t^{-k}|\cdot|^{k})}{|\cdot|^{2}} \ast |\Qt|^{2} \big ) x\epsilon_{q+1}\pm\ldots,
\end{equation}
whence the desired bound follows from our a-priori bounds and integrating from backwards from $t=\infty$.

The proof of Proposition \ref{prop:epsilon} is now complete. As previously noted, this also completes the proof of Theorem \ref{th:main}. $\blacksquare$


\section{Finite codimensional stability of the conformal blow up} 
\label{sec:proof:BW}


This section is devoted to the proof of Theorem \ref{th:BW}. We thus consider $\eqref{eq:hartree}$ with\footnote{The factor $1/2\pi^2$ is a just convenient choice for this section and without loss of generality.} convolution kernel $\Phi(x)=\frac{1}{2 \pi^2 |x|^2}$, and we ask whether suitable perturbations of the initial data lead to the same blowup as for the explicit solution
\begin{equation} \label{eq:BW}
S(t,x)=t^{-2}e^{\frac{-ix^{2}}{4t}}e^{\frac{i}{t}}Q(\frac{x}{t}) ,
\end{equation}
where $Q=Q^{\infty} \in H^1(\RR^4)$ is the ground state satisfying 
\begin{equation} \label{eq:QBW}
\DD Q-Q+(-\DD)^{-1}(|Q|^{2})Q=0, \ \ Q=Q(|x|)>0 .
\end{equation}
Here and for the rest of this section, it is expedient to use the following notation: 
\begin{equation}
(-\DD)^{-1} f = \frac{1}{2 \pi^2} \big ( |x|^{-2} \ast f ) .
\end{equation}
Recall that Theorem \ref{th:Qunique} below (together with a simple scaling argument) ensures the uniqueness of the ground state $Q(|x|)$ solving (\ref{eq:QBW}).

This issue that blowup solutions of the form (\ref{eq:BW}) still persist under suitable perturbations of initial data was first addressed by Bourgain-Wang for $L^{2}$-critical NLS with local nonlinearities in \cite{Bourgain+Wang1997} for space dimensions $d=1,2$. There the authors show that one can construct blowup solutions, which decouple into the bulk part as above and a radiation part with suitable prescribed asymptotic profile at blowup time. More precisely, the profile has to belong to a finite-codimensional manifold. Here, we implement a similar procedure and consider the problem 
\begin{equation} \label{eq:BWt}
i\partial_{t}u+\DD u+(-\DD)^{-1}(|u|^{2})u=0 \quad \mbox{in $\RR^{4}$},
\end{equation}
which is equivalent to $\eqref{eq:hartree}$ with $d=4$ and $\Phi(x) = |x|^{-2}$, up to an inessential constant in front of the nonlinearity. For simplicity's sake, we first consider {\it{radial solutions}} of (\ref{eq:BWt}), and we later sketch the modifications needed for a more general result. 

In the spirit of \cite{Bourgain+Wang1997}, we try to find a solution of the form 
 \begin{equation}\label{BWAnsatz}
 u(t,x)=t^{-2}e^{\frac{-ix^{2}}{4t}}e^{\frac{i}{t}}[Q(\frac{x}{t})+\epsilon(t,x)]+z_{\psi}(t,x)  ,
 \end{equation}
 where the main perturbation $z_{\psi}(t,x)$ solves the initial-value problem 
 \begin{equation} \label{eq:ivp_z}
 \left \{ \begin{array}{l} i\partial_{t}z_{\psi}+\DD z_{\psi}+(-\DD)^{-1}(|z_{\psi}|^{2})z_{\psi}=0, \\[1ex]
 z_{\psi}(0,x)=\psi(x) . \end{array} \right .
 \end{equation}
Here the initial datum $\psi(x)\in C_{0}^{\infty}(\RR^4)$, say, satisfies a finite number of suitable vanishing conditions. Note that $z_{\psi}$ can always be constructed on some time interval $[-\delta_{0}, \delta_{0}]$ for $\delta_0 > 0$ sufficiently small. However, we immediately face a serious issue here: While the interactions of the bulk term in (\ref{BWAnsatz}) and the modified profile term $z_{\psi}$ can be made small by forcing sufficient vanishing of $\psi$ at the origin in the case of local NLS (see \cite{Bourgain+Wang1997}), this is {\em never true} for the Hartree equation (\ref{eq:BWt}). To see this, it suffices to consider terms of the form 
 \[
 \DD^{-1}(|z_{\psi}|^{2})t^{-2}e^{\frac{-ix^{2}}{4t}}e^{\frac{i}{t}}Q(\frac{x}{t}) \quad \mbox{and} \quad t^{-2}\DD^{-1}(Q^{2})(\frac{x}{t})z_{\psi}(t,x) .
 \]
The problem here is, of course, that the operator $\DD^{-1}$ destroys any localization properties of $z_{\psi}$. To deal with this, we use some modulation theory combined with the radiality assumption. Indeed, the strength of the interaction between the bulk term and the profile modifier $z_{\psi}$ due to the non-local character of the nonlinearity is seen to lead to non-trivial phase and scale shifts of the bulk term $t^{-2}e^{\frac{-ix^{2}}{4t}}e^{\frac{i}{t}}Q(\frac{x}{t})$, as can be seen from the statement of the following Proposition \ref{prop:BW}. Such shifts do not occur in the Bourgain-Wang method for the local NLS. 

In what follows, we introduce the notation 
\[
Q_{\lambda}(y)=\lambda^{2}Q(\lambda y), \quad \mbox{for $\lambda > 0$},
\]
so that in particular $Q=Q_{1}$ holds. Then Theorem \ref{th:BW} will be a direct consequence of the following result.
 
\begin{prop}\label{prop:BW} Let $\psi$ be a smooth radial profile flat near origin: $|\psi(x)|\lesssim |x|^{2N}$ for $N$ large enough. Assume that $\psi$ is small in the sense that $\psi(x)=\alpha\psi_{0}(x)$, where $\psi_{0}(x)\in C_{0}^{\infty}(\R^{4})$ is fixed and $|\alpha|\leq \alpha_{0}$, the latter small enough. Then, for $\delta_0 > 0$ sufficiently small, there exists $u \in C_t^0 H^1_x ([-\delta_0, 0) \times \RR^4)$ solving \eqref{eq:BWt} such that
\[
u(t,x)=e^{i(\frac{1}{t}+\gamma(\frac{1}{t}))}[\frac{1}{t^{2}}e^{\frac{-ix^{2}}{4t}}Q_{\lambda(\frac{1}{t})}(\frac{x}{t})+\epsilon(t,x)]+z_{\psi}(t,x),
\]
where $\epsilon,\gamma, \lambda$ satisfy  
\[ \mbox{$\| \epsilon(t) \|_{H^1_x} \to 0$, $\gamma(\frac{1}{t}) \to 0$, and $\lambda(\frac{1}{t}) \to 1$ as $t \nearrow 0$.} \]
Moreover, $z_\psi \in C_t^0 H^1_x ([-\delta_0, +\delta_0] \times \RR^4)$ solves the initial-value problem (\ref{eq:ivp_z}).
\end{prop}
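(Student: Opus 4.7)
I would mirror the architecture of Proposition \ref{prop:epsilon}: pass through the pseudo-conformal transform to convert the blow-up problem at $t \nearrow 0$ into an asymptotic problem at $s = -1/t \to +\infty$, and set up an iteration scheme whose linearized flow is inverted using the spectral structure of $\calH^{(\infty)}$ (Lemma \ref{lem:H_inf}) together with the nondegeneracy result Theorem \ref{th:ker}. The two new ingredients compared with Proposition \ref{prop:epsilon} are the modulation parameters $(\gamma, \lambda)$ and the radiation term $z_\psi$; the latter must be carried along because, unlike for local NLS, it cannot be decoupled from the bulk by a support argument.

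Concretely, I would substitute $u = e^{i(1/t + \gamma)}\bigl[t^{-2} e^{-ix^2/(4t)} Q_{\lambda}(x/t) + r(t,x)\bigr] + z_\psi(t,x)$ into \eqref{eq:BWt} (with $\gamma = \gamma(1/t)$ and $\lambda = \lambda(1/t)$), and then apply the pseudo-conformal change of variables $r(t,x) = t^{-2} e^{-ix^2/(4t)} \bar{\epsilon}(s, y)$, $s = -1/t$, $y = x/t$, which reduces the equation for $r$ to a forced system of the shape
\begin{equation*}
i\partial_s \begin{pmatrix} \epsilon \\ \bar\epsilon \end{pmatrix} + \calH^{(\infty)} \begin{pmatrix} \epsilon \\ \bar\epsilon \end{pmatrix} = \mathcal{F}(s, y, \epsilon, \dot\gamma, \dot\lambda, \tilde{z}_\psi),
\end{equation*}
on $[T_0, \infty)$, where $\mathcal{F}$ gathers (i) modulation contributions proportional to $\dot\gamma, \dot\lambda$ and the scaling corrections between $Q_\lambda$ and $Q^{(\infty)}$; (ii) self-interactions $(-\Delta)^{-1}(|\epsilon|^2)\epsilon$ and their cross products with $Q^{(\infty)}$; and (iii) nonlocal cross-terms produced by the interaction of the bulk with the pseudo-conformally transformed radiation $\tilde{z}_\psi$. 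The two parameters $\gamma(s), \lambda(s)$ (integrated from $\gamma(\infty) = 0, \lambda(\infty) = 1$) are then fixed by requiring $\mathcal{F}$ to be orthogonal to the dual root modes $\psi_1, \psi_2$ of Lemma \ref{lem:H_inf}, which absorbs the phase and scaling directions $\phi_1, \phi_2$.

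The remaining two root obstructions $\psi_3, \psi_4$ are not dynamical parameters and must be eliminated by the flatness hypothesis $|\psi(x)| \lesssim |x|^N$. The key point is that the transformed radiation $\tilde{z}_\psi(s, y)$ samples $z_\psi(t, x)$ at $x = y/s$ on the bulk scale, so Taylor vanishing of $\psi = z_\psi(0, \cdot)$ to order $N$ at the origin translates into $s^{-N}$ decay of the cross-terms, and in particular of the pairings $\langle \mathcal{F}, \psi_{3,4}\rangle$. For $N$ sufficiently large and $|\alpha|$ sufficiently small, the iteration then proceeds exactly as in Lemma \ref{prop:subiteration}: starting from $\epsilon_0 = 0$, one obtains $(\gamma_{q+1}, \lambda_{q+1})$ by integrating the two modulation ODEs from $s = +\infty$ and produces $\epsilon_{q+1}$ by backward Duhamel on the non-root sector, using the coercivity $\langle L_+ P^{(\infty)} f, P^{(\infty)} f\rangle + \langle L_- P^{(\infty)} f, P^{(\infty)} f\rangle \gtrsim \|P^{(\infty)} f\|_{H^1}^2$, which holds on radial functions thanks to Theorem \ref{th:ker}. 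Polynomial decay in $s$ of the forcing then yields $\|\epsilon(t)\|_{H^1_x} \to 0$ as $t \nearrow 0$ together with $\gamma(1/t) \to 0$ and $\lambda(1/t) \to 1$.

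\textbf{Main obstacle.} The technical crux lies in the nonlocal bulk--radiation couplings such as
\begin{equation*}
(-\Delta)^{-1}\bigl(S(t)\,\bar z_\psi\bigr)\,S(t) \qquad \text{and} \qquad (-\Delta)^{-1}(|z_\psi|^2)\,S(t).
\end{equation*}
Unlike in the Bourgain--Wang treatment of local NLS, no geometric support separation is available: $(-\Delta)^{-1}$ spreads $z_\psi$ everywhere, so the gain must be extracted analytically by pairing the concentration of the bulk at scale $|x|\sim|t|$ against the Taylor vanishing of $\psi$ at the origin, and by handling $(-\Delta)^{-1}$ through Hardy--Littlewood--Sobolev weighted against the exponential decay of $Q$ and $\rho$. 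Verifying that, after the pseudo-conformal transform, the projections of the forcing on all four dual root modes decay at rates compatible with the iteration -- two via the modulation of $(\gamma, \lambda)$, two via the flatness of $\psi$ -- is where the real work lies and dictates how large $N$ must be chosen. Once this is done, the passage from the iteration scheme to the actual solution, and the control of the trailing error $\epsilon$, follow essentially verbatim from the arguments in Section \ref{sec:proof:main}.
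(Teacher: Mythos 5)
Your overall architecture coincides with the paper's: the proof does pass through the pseudo-conformal transform to an asymptotic problem at $s\to\infty$ (Proposition \ref{prop:BW2}), runs an iteration with a root/non-root splitting relative to $\calH^{(\infty)}$, uses the coercivity of $L_\pm$ coming from Theorem \ref{th:ker}, and lets the flatness of $\psi$ supply polynomial decay of the bulk--radiation couplings. However, two of your specific mechanisms would fail as stated. First, the worst cross-term $s^{-2}(-\DD)^{-1}(|z_{\psi}|^{2})(\tfrac1s,\tfrac{y}{s})\,Q_{\lambda(s)}(y)$ is \emph{not} made small by Taylor vanishing of $\psi$, nor by Hardy--Littlewood--Sobolev against the exponential decay of $Q$: the convolution integrates $|z_\psi|^2$ over all of $\RR^4$, so the potential evaluated near the origin is $O(\alpha_0^2)$ uniformly in $s$, and the term is genuinely $O(s^{-2})$ in $H^1_y$ --- fatal for a scheme targeting $\|\epsilon\|_{H^1}\lesssim s^{-3}$. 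The paper's resolution is to split the phase as $\gamma=\gamma_1+\gamma_2$ and choose $\dot\gamma_{1}$ to equal the value of this potential \emph{at the origin} (see \eqref{gamma1j+1}); the subtracted leading part is then a pure phase rotation of the bulk, and only the difference $[(-\DD)^{-1}(|z_\psi|^2)(\tfrac1s,\tfrac{y}{s})-(-\DD)^{-1}(|z_\psi|^2)(\tfrac1s,0)]Q_{\lambda(s)}(y)$ is $O(s^{-N})$, by Newton's theorem \eqref{Newton} for radial functions combined with Lemma \ref{lem:BW}. This origin-subtraction plus radiality is the essential new ingredient relative to Bourgain--Wang (and is why the nonradial case requires extra implicit conditions on $\psi$); a generic orthogonality condition on $\dot\gamma$ removes only the root projection of an $O(s^{-2})$ term and leaves an unacceptably large non-root remainder unless one verifies, via Newton's theorem, that the non-constant part is itself $O(s^{-N})$.

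Second, the $\phi_3,\phi_4$ obstructions are not ``eliminated by the flatness hypothesis.'' The pairings of the forcing with $\psi_3,\psi_4$ contain contributions of size $O(\delta^2 s^{-6})$ from terms quadratic and cubic in $\epsilon_j$, and of size $O(\delta s^{-3}(|\dot\gamma_{2,j+1}|+|\dot\lambda_{j+1}|))$ from the modulation corrections, none of which depend on $N$. The paper therefore retains a root component $\Gamma_{j+1}(s)\phi_3+\Delta_{j+1}(s)\phi_4$ inside $\epsilon_{j+1}$ and solves the associated Jordan-block ODEs backward from infinity exactly as in Lemma \ref{prop:subiteration}; moreover the feedback term $-4i\Gamma_{j+1}$ lands in the $\phi_2$ direction and enters the equation determining $\dot\lambda_{j+1}$, so the four directions are coupled and the choice of $(\dot\gamma_{2,j+1},\dot\lambda_{j+1})$ requires an implicit-function argument. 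Your description --- two modulation ODEs for $(\gamma,\lambda)$ plus backward Duhamel purely on the non-root sector --- omits the $\phi_3,\phi_4$ root ODEs and would leave an unhandled source in the generalized null-space.
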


By applying a pseudo-conformal transformation, the proof of Proposition \ref{prop:BW} will follow from:
\begin{prop}  \label{prop:BW2}
Under the assumptions of Proposition \ref{prop:BW}, there exists a solution of \eqref{eq:BWt} of the form
\begin{equation}\label{BWansatz}
v(s,y)=e^{i(s+\gamma(s))}[Q_{\lambda(s)}(y)+\epsilon(s,y)]+s^{-2}e^{\frac{iy^{2}}{4s}}z_{\psi}(\frac{1}{s}, \frac{y}{s})
\end{equation}
for $s\geq s_{0}$, with $s_{0} > 0$ sufficiently large, and we have the bounds 
\[
|\gamma(s)|\lesssim s^{-1},\, \| \epsilon(s,.) \|_{H^{1}_y}\lesssim s^{-3}, \, \| |y| \epsilon(s,.) \|_{L^2_y} \lesssim s^{-2},  \,|\lambda(s)-1|\lesssim s^{-3}, \quad \mbox{for $s \geq s_0$}.
\]
\end{prop}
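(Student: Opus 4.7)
The plan is to substitute the ansatz \eqref{BWansatz} into \eqref{eq:BWt} and extract a coupled system for the triple $(\epsilon,\gamma,\lambda)$. Using that $Q_{\lambda}$ solves the rescaled stationary Hartree equation, a direct computation produces a schematic equation
\[
i \partial_{s} \bm \epsilon \\ \overline{\epsilon} \endm + \Hinf \bm \epsilon \\ \overline{\epsilon} \endm = G_{\mathrm{mod}}(\dot\gamma,\dot\lambda,\epsilon) + G_{\mathrm{int}}(Q_{\lambda}, w_{\psi}) + G_{\mathrm{nl}}(\epsilon, w_{\psi}),
\]
where $w_{\psi}(s,y):= s^{-2} e^{iy^{2}/(4s)} z_{\psi}(1/s, y/s)$ is the radiation in the rescaled frame, $G_{\mathrm{mod}}$ gathers the modulation contributions $\dot\gamma\, Q_{\lambda}$, $\dot\lambda\, \partial_{\lambda}Q_{\lambda}$ plus $O((\lambda-1)\epsilon)$ commutator pieces, $G_{\mathrm{int}}$ collects the genuine bulk-radiation couplings produced by the Hartree nonlinearity---namely $(-\Delta)^{-1}(Q_{\lambda}^{2}) w_{\psi}$, $(-\Delta)^{-1}(Q_{\lambda}\mathrm{Re}\, w_{\psi}) Q_{\lambda}$, $(-\Delta)^{-1}(|w_{\psi}|^{2}) Q_{\lambda}$ and their $\epsilon$-counterparts---and $G_{\mathrm{nl}}$ collects genuine self-interactions of $\epsilon$.

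I would then set up an iteration $(\epsilon_{q},\gamma_{q},\lambda_{q})_{q\geq 0}$ with $\epsilon_{0}=0$, $\gamma_{0}=0$, $\lambda_{0}=1$, solved backward from $s=\infty$ with final conditions $\epsilon(\infty)=0$, $\gamma(\infty)=0$, $\lambda(\infty)=1$. At each step, the forcing is decomposed against the four radial root modes $\phi_{1},\ldots,\phi_{4}$ of $\Hinf$ from Lemma \ref{lem:H_inf}. The modulation parameters $\dot\gamma_{q+1}$ and $\dot\lambda_{q+1}$ are fixed by requiring the total forcing to be orthogonal to the dual root modes $\psi_{1}$ (phase) and $\psi_{2}$ (scaling); the resulting $2\times 2$ linear system is nonsingular by a Gramian computation. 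The remaining root projections onto $\psi_{3},\psi_{4}$ are absorbed by auxiliary root coefficients satisfying backward linear ODEs as in \eqref{modulation1}-\eqref{modulation2}, while the non-root component is inverted via the parametrix $\int_{s}^{\infty}e^{i(s-s')\Hinf}\Pinf(\cdot)\,ds'$, which is bounded in $H^{1}$ by coercivity of $L_{+}+L_{-}$ on the non-root radial subspace (Theorem \ref{th:ker}), as in Lemma \ref{prop:subiteration}.

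The crucial new estimates concern $G_{\mathrm{int}}$. Local well-posedness of \eqref{eq:ivp_z} together with the flatness $|\psi(x)|\lesssim |x|^{2N}$ and compact support of $\psi_{0}$ propagate to $|w_{\psi}(s,y)| + \langle y\rangle|\nabla_{y}w_{\psi}(s,y)| \lesssim s^{-2-2N}|y|^{2N}\mathbf{1}_{|y|\lesssim s}$ for $s$ large. The main obstacle here is exactly what separates this problem from the local NLS case handled by Bourgain-Wang: since $(-\Delta)^{-1}(Q_{\lambda}^{2})(y)$ decays only as $|y|^{-2}$, the cross term $(-\Delta)^{-1}(Q_{\lambda}^{2}) w_{\psi}$ is not small in unweighted $L^{2}$. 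The key observation is that all root modes of $\Hinf$ are built from $Q$, $\rho$ and polynomials thereof and are therefore exponentially localized; when $G_{\mathrm{int}}$ is paired against any such mode, the symmetry of $(-\Delta)^{-1}$ relocalizes the integrand back onto the soliton support, where $w_{\psi}$ decays as $s^{-2-2N}$. This yields $O(s^{-2-2N})$ bounds on the root projections of $G_{\mathrm{int}}$, and after backward integration of the modulation ODEs gives $|\gamma|\lesssim s^{-1}$ and $|\lambda-1|\lesssim s^{-3}$, while the non-root remainder becomes $s^{-k}$-small for arbitrarily large $k$ provided $N$ is chosen large enough, allowing the parametrix to close the iteration at the rate $\|\epsilon\|_{H^{1}}\lesssim s^{-3}$. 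The weighted bound $\||y|\epsilon\|_{L^{2}}\lesssim s^{-2}$ then propagates backward from $s=\infty$ by differentiating the equation for $y\epsilon$, in the spirit of the end of Section \ref{sec:proof:main}. Finally, Proposition \ref{prop:BW} and Theorem \ref{th:BW} follow by the pseudo-conformal change of variables $s=-1/t$, $y=x/t$, together with standard $H^{1}$-continuity of the flow of \eqref{eq:ivp_z} on $[-\delta_{0},\delta_{0}]$.
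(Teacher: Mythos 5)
Your scheme is structurally the same as the paper's (pseudo-conformal reduction, backward iteration from $s=\infty$, modulation of phase and scale to kill the $\phi_1,\phi_2$ root projections, backward ODEs for the $\phi_3,\phi_4$ coefficients, parametrix $\int_s^\infty e^{i(s-s')\calH}(\cdot)_{\rm non\text{-}root}\,ds'$ bounded via the coercivity of $L_++L_-$ off the root space). The gap is in your treatment of the bulk--radiation interaction. Your ``relocalization'' mechanism --- pairing against exponentially localized modes so that one only sees $z_\psi$ near the soliton core, where flatness gives $O(s^{-N})$ smallness --- does handle terms such as $(-\DD)^{-1}(Q_\lambda^2)\,w_\psi$ and $(-\DD)^{-1}(Q_\lambda\,\mathrm{Re}\,w_\psi)\,Q_\lambda$, where $z_\psi$ itself is evaluated on (or convolved against) a localized function. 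It does \emph{not} handle the term
\[
s^{-2}\,(-\DD)^{-1}(|z_\psi|^2)\big(\tfrac1s,\tfrac{y}{s}\big)\,Q_{\lambda(s)}(y)
\]
(nor its $\epsilon$-counterpart in $A(\epsilon)$). Here the \emph{potential} of $|z_\psi|^2$, not $z_\psi$, sits on the soliton core, and its value near the origin is a nonlocal average $\sim\int |z_\psi|^2|x|^{-2}dx=O(\alpha_0^2)$ that no amount of flatness of $\psi$ at the origin can make small; the term is genuinely of size $s^{-2}Q_\lambda$. This is exactly the long-range obstruction the paper isolates. Your claim that ``the root projections of $G_{\rm int}$ are $O(s^{-2-2N})$'' is therefore false for the $\phi_1$-projection of this term (and is internally inconsistent with your own conclusion $|\gamma|\lesssim s^{-1}$, which requires $\dot\gamma\sim s^{-2}$).

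Two things are needed that your proposal omits. First, the constant-in-$y$ part of this potential must be absorbed into an explicit phase correction --- the paper's $\gamma_{1,j+1}$ in \eqref{gamma1j+1}, with $\dot\gamma_{1,j+1}(s)\approx s^{-2}(-\DD)^{-1}(|z_\psi|^2)(\tfrac1s,0)$ --- which is what produces the anomalous rate $|\gamma(s)|\lesssim s^{-1}$ versus $|\lambda(s)-1|\lesssim s^{-3}$. (Your orthogonality condition against $\psi_1$ would implicitly do this for the root component, but the term $s^{-2}c(s)Q_\lambda$ is only \emph{approximately} proportional to $\phi_1$, and you still must show its non-root remainder is small.) Second, after subtracting the value at $y=0$, the residual $s^{-2}\big[(-\DD)^{-1}(|z_\psi|^2)(\tfrac1s,\tfrac{y}{s})-(-\DD)^{-1}(|z_\psi|^2)(\tfrac1s,0)\big]Q_\lambda(y)$ must be shown to be $O(s^{-N})$ on the core; this uses Newton's theorem \eqref{Newton} together with radiality --- for a radial density the potential difference at radii $\le |y|/s$ only sees the mass inside that ball, where $|z_\psi|\lesssim s^{-N}$ by Lemma \ref{lem:BW} --- and is precisely the step that forces the extra nonlinear vanishing conditions in the nonradial case. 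A naive gradient bound on the potential gains only one power of $s^{-1}$, which is insufficient to close the $\|\epsilon\|_{H^1}\lesssim s^{-3}$ bootstrap. (Minor: your pointwise bound on $w_\psi$ keeps only the $l=0$ term of Lemma \ref{lem:BW}; on the core the correct size is $s^{-2-N}$, not $s^{-2-2N}$, which is harmless but should be stated correctly.)
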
 

The rest of this section is devoted to the proof of Proposition \ref{prop:BW2}.


\subsection{Setting up the Iteration Scheme}


Let us derive the equation satisfied by $\epsilon$ when applying the ansatz (\ref{BWansatz}). To this end, we use a similar notation as in Section \ref{sec:proof:main}, and we write
\begin{equation}
L{\epsilon}=L_{+}\epsilon_{1}+iL_{-}\epsilon_{2},\quad \epsilon=\epsilon_{1}+i\epsilon_{2},
\end{equation}
with 
\begin{equation}
L_{-}=-\DD+ 1 - (\DD)^{-1}(Q^{2}),\quad L_{+}= - \DD + 1 - (\DD)^{-1}(Q^{2})-2 Q (\DD)^{-1}(Q .).
\end{equation}
We then obtain the following equation for $\epsilon(s,y)$:
\begin{equation}\label{epsmess}\begin{split}
(i\partial_{s}-L)\epsilon = &-\dot{\gamma}(s)[Q_{\lambda(s)}(y)+\epsilon]+i\dot{\lambda}(s)\lambda(s)[2+y\cdot\nabla]Q(\lambda(s)y)\\&+(-\DD)^{-1}(Q_{\lambda(s)}^{2}(y))s^{-2}e^{\frac{iy^{2}}{4s}}z_{\psi}(\frac{1}{s}, \frac{y}{s})+s^{-2}(-\DD)^{-1}(|z_{\psi}|^{2})(\frac{1}{s},\frac{y}{s})Q_{\lambda(s)}(y)\\&+(-\DD)^{-1}(2\Re(e^{i(s+\gamma(s))}Q_{\lambda(s)}(y)\overline{s^{-2}e^{\frac{iy^{2}}{4s}}z_{\psi}}(\frac{1}{s},\frac{y}{s})))Q_{\lambda(s)}(y)\\&+(-\DD)^{-1}(2\Re(e^{i(s+\gamma(s))}Q_{\lambda(s)}(y)\overline{s^{-2}e^{\frac{iy^{2}}{4s}}z_{\psi}}(\frac{1}{s},\frac{y}{s})))s^{-2}e^{\frac{iy^{2}}{4s}}e^{-i(s+\gamma(s))}z_{\psi}(\frac{1}{s}, \frac{y}{s})\\
&+A(\epsilon)+B(\epsilon^{2})+C(\epsilon^{3})+\Delta_{\lambda(s)}L\epsilon,
\end{split}\end{equation}
where 
\begin{equation}\nonumber\begin{split}
A(\epsilon)=&s^{-2}(-\DD)^{-1}(|z_{\psi}|^{2})(\frac{1}{s},\frac{y}{s})\epsilon(s,y) \\ & +
(-\DD)^{-1}(2\Re(e^{i(s+\gamma(s))}\epsilon(s,y)\overline{s^{-2}e^{\frac{iy^{2}}{4s}}z_{\psi}}(\frac{1}{s},\frac{y}{s})))Q_{\lambda(s)}(y)
\\&+(-\DD)^{-1}(2\Re(e^{i(s+\gamma(s))}Q_{\lambda(s)}(y)\overline{s^{-2}e^{\frac{iy^{2}}{4s}}z_{\psi}}(\frac{1}{s},\frac{y}{s})))\epsilon(s,y)
\\&+(-\DD)^{-1}(2\Re(e^{i(s+\gamma(s))}\epsilon(s,y)\overline{s^{-2}e^{\frac{iy^{2}}{4s}}z_{\psi}}(\frac{1}{s},\frac{y}{s})))s^{-2}e^{\frac{iy^{2}}{4s}}e^{-i(s+\gamma(s))}z_{\psi}(\frac{1}{s}, \frac{y}{s}),
\end{split}\end{equation}
as well as 
\begin{equation}\nonumber\begin{split}
B(\epsilon^{2})=&(-\DD)^{-1}(|\epsilon|^{2}(s,y))s^{-2}e^{\frac{iy^{2}}{4s}}z_{\psi}(\frac{1}{s}, \frac{y}{s})+(-\DD)^{-1}(|\epsilon|^{2}(s,y))Q_{\lambda(s)}(y)\\&+(-\DD)^{-1}(2\Re(Q_{\lambda(s)}(y)\overline{\epsilon}(s,y)))\epsilon(s,y)\\
&+(-\DD)^{-1}(2\Re(s^{-2}e^{\frac{iy^{2}}{4s}}z_{\psi}(\frac{1}{s}, \frac{y}{s})
\overline{e^{-i(s+\gamma(s))}\epsilon}(s,y)))\epsilon(s,y),
\end{split}\end{equation}
and finally
\[
C(\epsilon^{3})=(-\DD)^{-1}(|\epsilon|^{2})\epsilon .
\]
In the equation for $\epsilon(s,y)$, the last term $\Delta_{\lambda(s)}L\epsilon$ accounts for the error incurred upon replacing $Q_{\lambda(s)}$ by $Q=Q_{1}$ in the linear part, whence it is given by 
\begin{equation} \begin{split}
\Delta_{\lambda(s)}L\epsilon= & (-\DD)^{-1}(Q^{2}-Q_{\lambda(s)}^{2})\epsilon+\DD^{-1}(2\Re([Q-Q_{\lambda(s)}]\overline{\epsilon}))Q_{\lambda(s)} \\& +(-\DD)^{-1}(2\Re(Q\overline{\epsilon}))[Q-Q_{\lambda(s)}] .
\end{split}
\end{equation}
Note that the unknown $\epsilon$ also needs to vanish at infinity:
\[
\lim_{s\rightarrow \infty}||\epsilon(s,.)||_{H^{1}_x}=0.
\]
We shall find $\epsilon$, $\lambda$, $\gamma$ as the limits of a suitable iteration scheme. The purpose of the functions $\gamma(s)$, $\lambda(s)$ will be to partly eliminate the root part of the right hand side of \eqref{epsmess}. We first write \eqref{epsmess} in vectorial form as follows:
\begin{equation}\label{epsvector}\begin{split}
(i\partial_{s}+\calH)\bm\epsilon\\ \overline{\epsilon}\endm=&-\dot{\gamma}(s)\bm (Q_{\lambda(s)}(y)+\epsilon)\\- (Q_{\lambda(s)}(y)+\overline{\epsilon})\endm+\bm i\dot{\lambda}(s)\lambda(s)[2+y\cdot\nabla]Q(\lambda(s)y)\\ i\dot{\lambda}(s)\lambda(s)[2+y\cdot\nabla]Q(\lambda(s)y)\endm\\&+\bm F(Q_{\lambda(s)}, z_{\psi})\\ -\overline{F(Q_{\lambda(s)}, z_{\psi})}\endm+\bm A(\epsilon)+B(\epsilon^{2})+C(\epsilon^{3})+\DD_{\lambda(s)}L\epsilon\\ -\overline{[A(\epsilon)+B(\epsilon^{2})+C(\epsilon^{3})+\DD_{\lambda(s)}L\epsilon]}\endm .
\end{split}\end{equation}
Here the operator $\calH=\calH^{(\infty)}$ is the same as the one used in the preceding section (up to a simple rescaling due to the different choice of coupling constant in the nonlinearity). Also, the expression $F(Q_{\lambda(s)}, z_{\psi})$ refers to the sum of the terms three to six on the right hand side of \eqref{epsmess}.

Now, we assume that the iterates $\epsilon_{j}(s,y), \gamma_{j}(s), \lambda_{j}(s)$ have been defined, with bounds to be specified later. We need to specify how to choose  $\epsilon_{j+1}(s,y), \gamma_{j+1}(s), \lambda_{j+1}(s)$. Assuming $\gamma_{j+1}$, $\lambda_{j+1}$ to be chosen, we set 
\begin{equation}\label{epsvectorj}\begin{split}
&(i\partial_{s}+\calH)\bm\epsilon_{j+1}\\ \overline{\epsilon_{j+1}}\endm\\&=-\dot{\gamma}_{j+1}(s)\bm (Q_{\lambda_{j}(s)}(y)+\epsilon_{j})\\- (Q_{\lambda_{j}(s)}(y)+\epsilon_{j})\endm+\bm i\dot{\lambda}_{j+1}(s)\lambda_{j}(s)[2+y\cdot\nabla]Q(\lambda_{j}(s)y)\\ i\dot{\lambda}_{j+1}(s)\lambda_{j}(s)[2+y\cdot\nabla]Q(\lambda_{j}(s)y)\endm\\&+\bm F_{j}(Q_{\lambda_{j}(s)}, z_{\psi})\\ -\overline{F_{j}(Q_{\lambda_{j}(s)}, z_{\psi})}\endm+\bm A_{j}(\epsilon_{j})+B_{j}(\epsilon_{j}^{2})+C_{j}(\epsilon_{j}^{3})+\DD_{\lambda_{j}(s)}L\epsilon_{j}\\ -\overline{[A_{j}(\epsilon_{j})+B_{j}(\epsilon_{j}^{2})+C_{j}(\epsilon_{j}^{3})+\DD_{\lambda_{j}(s)}L\epsilon_{j}]}\endm .
\end{split}\end{equation}
Here the expression $F_{j}(Q_{\lambda_{j}(s)}, z_{\psi})$ is defined through the right-hand side of \eqref{epsmess}, but with $\lambda, \gamma$ replaced by $\lambda_{j}, \gamma_{j}$, and similarly $A_{j}(\epsilon_{j})$ is defined as $A(\epsilon)$ with $\epsilon, \gamma, \lambda$ replaced by $\epsilon_{j}, \gamma_{j}, \lambda_{j}$. 

Now we need to specify how to choose $\gamma_{j+1}, \lambda_{j+1}$. As for the former, we split 
\[
\gamma_{j+1}=\gamma_{1,j+1}+\gamma_{2,j+1} .
\]
Here the term $\gamma_{1,j+1}$ is chosen to essentially eliminate those terms on the right hand side of \eqref{epsvectorj} contained in 
\[
\bm F_{j}(Q_{\lambda_{j}(s)}, z_{\psi})\\ -\overline{F_{j}(Q_{\lambda_{j}(s)}, z_{\psi})}\endm,\,\bm A_{j}(\epsilon_{j})+B_{j}(\epsilon_{j}^{2})+C_{j}(\epsilon_{j}^{3})+\DD_{\lambda_{j}(s)}L\epsilon_{j}\\ -\overline{[A_{j}(\epsilon_{j})+B_{j}(\epsilon_{j}^{2})+C_{j}(\epsilon_{j}^{3})+\DD_{\lambda_{j}(s)}L\epsilon_{j}]}\endm ,
\]
which have $Q_{\lambda_{j}(s)}(y)$ as a third factor. Specifically, we define 
\begin{equation}\label{gamma1j+1}\begin{split}
\dot{\gamma}_{1,j+1}(s)= & s^{-2}(-\DD)^{-1}(|z_{\psi}|^{2})(\frac{1}{s},0) \\ &+(-\DD)^{-1}(2\Re(e^{i(s+\gamma_{j}(s))}\epsilon_{j}(s,y)\overline{s^{-2}e^{\frac{iy^{2}}{4s}}z_{\psi}}(\frac{1}{s},\frac{y}{s})))(s,0) .
\end{split}
\end{equation}
Note that with this choice of $\gamma_{1,j+1}$, the term 
\[
-\dot{\gamma}_{1,j+1}(s)\bm Q_{\lambda_{j}(s)}(y)\\- Q_{\lambda_{j}(s)}(y)\endm
\]
essentially cancels the terms on the right hand side of \eqref{epsvectorj} corresponding to the fourth term  in \eqref{epsmess} as well as the 2nd term in $A_{j}(\epsilon_{j})$. More precisely, we find that
\begin{equation}\nonumber\begin{split}
&[s^{-2}(-\DD)^{-1}(|z_{\psi}|^{2})(\frac{1}{s},0)+(-\DD)^{-1}(2\Re(e^{i(s+\gamma_{j}(s))}\epsilon_{j}(s,y)\overline{s^{-2}e^{\frac{iy^{2}}{4s}}z_{\psi}}(\frac{1}{s},\frac{y}{s})))(s,0)]Q_{\lambda_{j}(s)}(y)\\
&-[s^{-2}(-\DD)^{-1}(|z_{\psi}|^{2})(\frac{1}{s},\frac{y}{s})+(-\DD)^{-1}(2\Re(e^{i(s+\gamma_{j}(s))}\epsilon_{j}(s,y)\overline{s^{-2}e^{\frac{iy^{2}}{4s}}z_{\psi}}(\frac{1}{s},\frac{y}{s})))(s,y)]Q_{\lambda_{j}(s)}(y)\\&=O(\frac{1}{s^{N}}),
\end{split}\end{equation}
To see this, we note that we have uniform exponential decay $|Q_{\lambda}(x)| \lesssim e^{-c |x|}$ for some constant $c >0$, provided that $\lambda > 0$ varies in a compact set. The above estimate then follows from the radiality assumption and Newton's theorem, see e.\,g.~equation \eqref{Newton}, as well as the following elementary estimate which follows from finite Taylor expansion of $z_{\psi}$ with respect to $t$ and using the equation for $z_\psi$; see \cite{Bourgain+Wang1997} for a similar statement.

\begin{lemma} \label{lem:BW} Provided the initial condition $\psi(x) \in C^\infty_0(\RR^4)$ in \eqref{eq:ivp_z} satisfies $|\psi(x)|\lesssim |x|^{2N}$ for some $N$, we have 
\[
|z_{\psi}(t,x)|\lesssim \sum_{2l+j=2N}|t|^{l}|x|^{j},\quad \mbox{for $t \in [-\delta_{0}, \delta_{0}]$},
\]
with $\delta_0 >0$ a sufficiently small constant.
\end{lemma}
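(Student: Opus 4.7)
My approach is to perform a finite Taylor expansion of $z_\psi(t,\cdot)$ in $t$ around $t=0$ and verify inductively, using equation \eqref{eq:ivp_z}, that the $l$-th Taylor coefficient vanishes at the origin to order $2N-2l$, matching the claimed pattern $2l+j=2N$. Since $\psi \in C_0^\infty(\RR^4)$ and Hartree NLS is $H^s$-subcritical for every $s$, persistence of regularity gives $z_\psi \in C^\infty_t([-\delta_0,\delta_0]; H^s(\RR^4))$ for every $s$, and Sobolev embedding in $d=4$ then yields uniform $L^\infty$ bounds on $z_\psi$ and on each of its time derivatives. In particular the claim is trivial for $|x|\geq 1$ (the right-hand side is already $\geq |x|^{2N}\geq 1$), so it suffices to prove the bound on $\{|x|\leq 1\}$. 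Writing
\[
z_\psi(t,x) = \sum_{l=0}^{N} \frac{t^l}{l!}\, a_l(x) + R_N(t,x), \qquad a_l(x) := \partial_t^l z_\psi(0,x),
\]
the Lagrange remainder satisfies $|R_N(t,x)| \lesssim |t|^{N+1}\leq |t|^N$ for $|t|\leq\delta_0\leq 1$, and is absorbed by the $l=N,\,j=0$ term of the claimed sum.

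The heart of the argument is the inductive claim $|a_l(x)| \lesssim |x|^{2N-2l}$ for $|x|\leq 1$. The base case $l=0$ is the hypothesis $|\psi(x)|\lesssim|x|^{2N}$. For the step, differentiating $\partial_t z_\psi = i\Delta z_\psi + i(-\Delta)^{-1}(|z_\psi|^2) z_\psi$ a total of $l-1$ times in $t$ and evaluating at $t=0$ yields, via Leibniz,
\[
a_l \,=\, i\Delta a_{l-1} \,+\, i\sum_{j+m+p=l-1} c_{jmp}\,(-\Delta)^{-1}(a_j \overline{a_m})\, a_p,
\]
for combinatorial coefficients $c_{jmp}$. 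The first term vanishes to order $2N-2l$ at $0$ by the inductive hypothesis on $a_{l-1}$. For each nonlocal term, the factor $(-\Delta)^{-1}(a_j \overline{a_m})$ is smooth and bounded near the origin, because $a_j \overline{a_m}$ lies in the Schwartz class (propagated through the induction, since $\Delta$ and multiplication preserve the Schwartz class, and the Newtonian convolution with $|x|^{-2}$ in $\RR^4$ maps Schwartz densities to smooth functions), while the remaining factor $a_p$ vanishes there to order $2N-2p \geq 2N-2(l-1) > 2N-2l$. Applying Taylor's theorem to the smooth function $a_l$ then converts this vanishing order into the pointwise bound $|a_l(x)|\lesssim |x|^{2N-2l}$ on $\{|x|\leq 1\}$, which combined with the decomposition above completes the proof.

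The main obstacle is the bookkeeping needed to propagate Schwartz-class decay of each $a_\cdot$ through the induction, so that $(-\Delta)^{-1}(a_j \overline{a_m})$ is controlled near the origin at every stage. In $d=4$ this is routine because the kernel $|x|^{-2}$ integrates a Schwartz density into a smooth function, but it must be tracked carefully; the structure of the argument parallels the one in \cite{Bourgain+Wang1997} for the local nonlinearity, with the smoothing effect of the convolution in fact simplifying the estimates compared to the local case.
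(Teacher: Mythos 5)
Your proof is correct and follows exactly the route the paper indicates (the paper only sketches it in one line: finite Taylor expansion of $z_\psi$ in $t$ combined with the equation, citing \cite{Bourgain+Wang1997}). Your induction on the vanishing order of the Taylor coefficients $a_l=\partial_t^l z_\psi(0,\cdot)$, together with the observation that $(-\Delta)^{-1}$ applied to a Schwartz density is smooth and bounded near the origin in $\RR^4$, supplies the details the authors omit.
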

 In order to determine $\gamma_{2,j+1}, \lambda_{j+1}, \epsilon_{j+1}$, we now use the following iteration lemma, which also states the bounds:

\begin{lemma}\label{bwiteration} Let $\delta>0$ be small enough, and also $\alpha_{0}=\alpha_{0}(\delta)$ as in Proposition~\ref{prop:BW} small enough. Further assume $N$ large enough.Then, assuming the functions $\epsilon_{j}(s,y), \gamma_{j}(s),\lambda_{j}(s)$ to be $C^1$ and satisfying the bounds
\[
||\epsilon_{j}(s,.)||_{H^{1}}\leq \delta s^{-3},\,|\dot{\gamma}_{2,j}(s)|\leq \delta s^{-3},\, |\dot{\lambda}_{j}(s)|\leq \delta s^{-4},
\]
there exist $C^1$-functions $\gamma_{2,j+1}(s)$, $\lambda_{j+1}(s)$, $\epsilon_{j+1}(s,y)$, such that  if we define $\gamma_{j+1}=\gamma_{1,j+1}+\gamma_{2,j+1}$, then $\gamma_{j+1},\lambda_{j+1}, \epsilon_{j+1}$ satisfy \eqref{epsvectorj}. Furthermore, the functions  $\gamma_{2,j+1}(s)$, $\lambda_{j+1}(s)$, $\epsilon_{j+1}(s,y)$ satisfy identical bounds. 
\end{lemma}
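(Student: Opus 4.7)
The plan is to build $\gamma_{2,j+1}$, $\lambda_{j+1}$, $\epsilon_{j+1}$ by the same root/non-root decomposition that was used for Proposition \ref{prop:epsilon}, applied now to the equation (\ref{epsvectorj}) whose linear part is the \emph{time-independent} operator $\calH = \calH^{(\infty)}$ from Lemma \ref{lem:H_inf}. Write the right-hand side of (\ref{epsvectorj}) schematically as $\calR_j$ (with $\dot\gamma_{1,j+1}$ already inserted via \eqref{gamma1j+1}), project it onto the four-dimensional generalized null-space $\calN$ of $\calH$ and its complement using the dual modes $\psi_1,\dots,\psi_4$, and declare $\dot\gamma_{2,j+1}$ and $\dot\lambda_{j+1}$ by the requirement that they cancel the components of $\calR_j$ along the phase mode $\phi_1$ and the scaling mode $\phi_2$. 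These are two explicit algebraic relations analogous to \eqref{root1}--\eqref{root4}, but with the forcing coming from the Hartree interaction $F_j(Q_{\lambda_j},z_\psi) + A_j + B_j + C_j + \Delta_{\lambda_j}L\epsilon_j$ rather than from $-i\partial_t\Qt$. The coefficients along $\phi_3,\phi_4$ will be absorbed into the root part of $\epsilon_{j+1}$ through the ODE system \eqref{modulation1}--\eqref{modulation2}, integrated from $s=+\infty$ so that all four root coefficients vanish at infinity.

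For the non-root part, define $\Pinf \epsilon_{j+1}$ by the backwards-in-time Duhamel formula
\[
\bm \Pinf\epsilon_{j+1}(s)\\ \overline{\Pinf\epsilon_{j+1}(s)}\endm = \int_s^\infty e^{i(s-s')\calH}\, \Pinf \calR_j(s')\, ds' ,
\]
exactly as in (\ref{auxiliary}). Since $\calH$ here is independent of $s$, all the time-derivative commutator terms $(\pr_t\Pt)\tilde\epsilon^{1,(t)}$ and $(\calH^{(t)}-\calH^{(\infty)})$ pieces that plagued Lemma \ref{prop:subiteration} drop out — a genuine simplification. The $H^1$ coercivity of $\langle L_-\cdot,\cdot\rangle+\langle L_+\cdot,\cdot\rangle$ on $\mathrm{Ran}\,\Pinf$, guaranteed by Theorem \ref{th:ker} in the radial sector, together with its invariance under $e^{is\calH}$, gives
\[
\|\Pinf\epsilon_{j+1}(s)\|_{H^1_y}\lesssim \int_s^\infty \|\Pinf \calR_j(s')\|_{H^1_y}\, ds'.
\]

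The bookkeeping of decay rates is the crux. Using Lemma \ref{lem:BW} and the rescaling $y\mapsto y/s$, the genuinely nonlocal source $s^{-2}(-\Delta)^{-1}(|z_\psi|^2)(1/s,y/s)\,Q_{\lambda_j}$ and its cousin involving the $Q_{\lambda_j}\bar z_\psi$ cross term decay like $s^{-N}$ in $H^1_y$ once the monopole pieces at $y=0$ have been subtracted — and these monopole pieces are precisely what \eqref{gamma1j+1} was engineered to cancel, using radiality and Newton's theorem. The remaining forcing $A_j+B_j+C_j$ is controlled by the inductive bounds $\|\epsilon_j\|_{H^1}\leq\delta s^{-3}$, $|\dot\lambda_j|\leq \delta s^{-4}$, and the Hardy and Hardy--Littlewood--Sobolev estimates that entered Lemma \ref{lem:F}; one finds $\|A_j\|_{H^1}+\|B_j\|_{H^1}+\|C_j\|_{H^1}\lesssim \alpha_0\delta s^{-5}+\delta^2 s^{-6}$. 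The shift term satisfies $\|\Delta_{\lambda_j}L\epsilon_j\|_{H^1}\lesssim |\lambda_j(s)-1|\,\|\epsilon_j\|_{H^1}\lesssim \delta^2 s^{-6}$. Integrating once more from $+\infty$ yields $\|\Pinf\epsilon_{j+1}(s)\|_{H^1}\leq \tfrac{1}{2}\delta s^{-3}$, and analogous estimates on the root projections give $|\dot\gamma_{2,j+1}|\leq\tfrac{1}{2}\delta s^{-3}$ and $|\dot\lambda_{j+1}|\leq\tfrac{1}{2}\delta s^{-4}$, provided $N$ is chosen large (to dominate the bulk--radiation interaction) and $\alpha_0=\alpha_0(\delta)$ small (to dominate the linear-in-$\epsilon$ cross terms in $A_j$).

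The main obstacle is the nonlocal cross term $(-\Delta)^{-1}(Q_{\lambda_j}^2)\cdot s^{-2}e^{iy^2/4s}z_\psi(1/s,y/s)$ and its symmetric partner: unlike in the local NLS case of \cite{Bourgain+Wang1997}, vanishing of $\psi$ at $0$ does not directly suppress this interaction, because $(-\Delta)^{-1}(Q_{\lambda_j}^2)$ is a long-range object. The way out is the radial-monopole cancellation described above, which is the whole reason $\gamma_{1,j+1}$ was split off from $\gamma_{2,j+1}$; showing that the residual after this subtraction decays faster than $s^{-3}$ in $H^1_y$ — in particular that derivatives falling on the Poisson kernel where $|y|\sim s$ do not spoil the estimate — is the delicate point and is where the hypothesis "$N$ sufficiently large" is genuinely used. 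Once this is granted, the bounds on $\gamma_{2,j+1}$, $\lambda_{j+1}$, $\epsilon_{j+1}$ reproduce those on the previous iterate with the factor $\tfrac{1}{2}$, and the lemma follows.
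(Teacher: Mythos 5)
Your architecture is the paper's: isolate the phase and scaling root modes, absorb the $\phi_3,\phi_4$ components of the forcing into the root part of $\epsilon_{j+1}$ via the modulation ODEs integrated from $s=+\infty$, and define the non-root part by the backwards Duhamel formula for $e^{is\calH}$, controlled through the coercivity of $\la L_-\cdot,\cdot\ra+\la L_+\cdot,\cdot\ra$ on the range of $\Pinf$. Two points, however, are not right as stated. First, the relations determining $\dot\gamma_{2,j+1}$ and $\dot\lambda_{j+1}$ are \emph{not} explicit: the right-hand side of \eqref{epsvectorj} itself contains $\dot\gamma_{j+1}$ and $\dot\lambda_{j+1}$ multiplying the residuals $Q_{\lambda_j}-Q+\epsilon_j$ and $[2+y\cdot\nabla][Q(\lambda_j(s)y)-Q(y)]$, whose root projections are of size $O(\delta s^{-3}(|\dot\gamma_{2,j+1}|+|\dot\lambda_{j+1}|))$. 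One therefore has to invert a near-identity linear system in $(\dot\gamma_{2,j+1},\dot\lambda_{j+1})$ (the paper invokes the implicit function theorem for exactly this). Moreover, the correct condition on $\dot\lambda_{j+1}$ is not ``kill the $\phi_2$-component of the forcing'' but $\dot\lambda_{j+1}+\beta_j-4\Gamma_{j+1}=0$, because $(i\partial_s+\calH)$ applied to the $|y|^2Q$-part $\Gamma_{j+1}\phi_3$ of $\epsilon_{j+1}$ feeds a multiple of $\phi_2$ back into the equation; this coupling is encoded in \eqref{modulation1}--\eqref{modulation2}, which you cite, so it is recoverable, but your prescription as written omits it.

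Second, and more substantively, you misattribute the mechanism that controls the term you yourself single out as the main obstacle. The subtraction \eqref{gamma1j+1} removes the value at $y=0$ of the nonlocal potentials generated by $|z_\psi|^2$ and by $\epsilon_j\overline{z_\psi}$, and hence cancels (up to $O(s^{-N})$) only the terms in which \emph{those} potentials multiply the exponentially localized $Q_{\lambda_j}$ or $\epsilon_j$ — there the difference between the potential at $y$ and at $0$ is paired with exponential decay, and radiality plus Newton's theorem \eqref{Newton} give the gain. The term $(-\DD)^{-1}(Q_{\lambda_j}^2)(y)\,s^{-2}e^{iy^2/4s}z_\psi(\frac1s,\frac ys)$ is of the opposite type (localized potential times delocalized radiation); it is not proportional to $Q_{\lambda_j}$ and cannot be cancelled by any phase modulation — indeed \eqref{gamma1j+1} contains no counterpart to it. The paper handles it by direct estimate: paired against the exponentially decaying dual root modes, Lemma \ref{lem:BW} and the flatness of $\psi$ give $O(\alpha_0 s^{-2-N})$; in $H^1_y$ one only gets $O(\alpha_0 s^{-4})$ (not $s^{-N}$), using the $|y|^{-2}$ decay of the Newtonian potential of $Q_{\lambda_j}^2$ — which is exactly enough to integrate from infinity to $\delta s^{-3}$. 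With these two corrections your scheme coincides with the paper's proof.
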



\begin{proof}[Proof of Lemma \ref{bwiteration}]


Given a vector valued function $\bm F(s,y)\\ \overline{F(s,y)}\endm$, we shall invoke the decomposition
\[ 
\bm F(s,y)\\ \overline{F(s,y)}\endm=\bm F(s,y)\\ \overline{F(s,y)}\endm_{\text{root}}+\bm F(s,y)\\ \overline{F(s,y)}\endm_{\text{non-root}} .
\]
Here the root part is defined as in the equations following \eqref{eq:modul}, i.\,e.~we have 
\[
\bm F(s,y)\\ \overline{F(s,y)}\endm_{\text{root}}=\sum_{j=1}^{4}b_{j}(s)\phi_{j}(y),
\]
where the coefficients $b_{j}(s)$ are given by \eqref{root1}-\eqref{root4}.

Next,  we return to \eqref{epsvectorj} and rearrange the terms on the right-hand side as follows (recall that $Q=Q^{(\infty)}$):
\begin{equation}\label{epsvectorj'}\begin{split}
&(i\partial_{s}+\calH)\bm\epsilon_{j+1}\\ \overline{\epsilon_{j+1}}\endm=\\&
-\dot{\gamma}_{2,j+1}(s)\bm Q(y)\\ -Q(y)\endm+\bm i\dot{\lambda}_{j+1}(s)\lambda_{j}(s)[2+y\cdot\nabla]Q(y)\\ i\dot{\lambda}_{j+1}(s)\lambda_{j}(s)[2+y\cdot\nabla]Q(y)\endm\\&-\dot{\gamma}_{2,j+1}(s)\bm Q_{\lambda_{j}(s)}(y)-Q(y)+\epsilon_{j}\\ -[Q_{\lambda_{j}(s)}(y)-Q(y)+\overline{\epsilon_{j}}]\endm\\&
+\bm i\dot{\lambda}_{j+1}(s)\lambda_{j}(s)[2+y\cdot\nabla][Q(\lambda_{j}(s)y)-Q(y)]\\ i\dot{\lambda}_{j+1}(s)\lambda_{j}(s)[2+y\cdot\nabla][Q(\lambda_{j}(s)y)-Q(y)]\endm\\&+\bm F_{j}(Q_{\lambda_{j}(s)}, z_{\psi})\\ -\overline{F_{j}(Q_{\lambda_{j}(s)}, z_{\psi})}\endm-\dot{\gamma}_{1,j+1}(s)\bm Q_{\lambda_{j}(s)}(y)\\-\overline{Q_{\lambda_{j}(s)}(y)}\endm\\&+\bm A_{j}(\epsilon_{j})+B_{j}(\epsilon_{j}^{2})+C_{j}(\epsilon_{j}^{3})+\DD_{\lambda_{j}(s)}L\epsilon_{j}\\ -\overline{[A_{j}(\epsilon_{j})+B_{j}(\epsilon_{j}^{2})+C_{j}(\epsilon_{j}^{3})+\DD_{\lambda_{j}(s)}L\epsilon_{j}]}\endm-\dot{\gamma}_{1,j+1}(s)\bm \epsilon_{j}(s,y)\\ -\overline{\epsilon_{j}(s,y)}\endm
\end{split}\end{equation}
We can write this equation schematically as 
\begin{equation}\begin{split}
(i\partial_{s}+\calH)\bm\epsilon_{j+1}\\ \overline{\epsilon_{j+1}}\endm
=&-\dot{\gamma}_{2,j+1}(s)\bm Q(y)\\ -Q(y)\endm+\bm i\dot{\lambda}_{j+1}(s)\lambda_{j}(s)[2+y\cdot\nabla]Q(y)\\ i\dot{\lambda}_{j+1}(s)\lambda_{j}(s)[2+y\cdot\nabla]Q(y)\endm\\
&+\bm N(\lambda_{j},\lambda_{j+1}, \gamma_{j}, \gamma_{j+1}, \epsilon_{j})\\ -\overline{ N(\lambda_{j},\lambda_{j+1}, \gamma_{j}, \gamma_{j+1}, \epsilon_{j})}\endm
\end{split}\end{equation}
Now we apply the above decomposition into a root and non-root part to the last term on the right. Thus we write 
\begin{equation}\nonumber\begin{split}
&\bm N(\lambda_{j},\lambda_{j+1}, \gamma_{j}, \gamma_{j+1}, \epsilon_{j})\\ -\overline{ N(\lambda_{j},\lambda_{j+1}, \gamma_{j}, \gamma_{j+1}, \epsilon_{j})}\endm
=\bm N(\lambda_{j},\lambda_{j+1}, \gamma_{j}, \gamma_{j+1}, \epsilon_{j})\\ -\overline{ N(\lambda_{j},\lambda_{j+1}, \gamma_{j}, \gamma_{j+1}, \epsilon_{j})}\endm_{\text{non-root}}\\
&+i\alpha_{j}(s)\bm i Q\\ -iQ\endm+i\beta_{j}(s)\bm 2Q+y\cdot\nabla Q\\ 2Q+y\cdot\nabla Q\endm +i\tilde{\gamma}_{j}(s)\bm i |y|^{2}Q\\ -iQ|y|^{2}\endm+i\delta_{j}(s)\bm\rho\\ \rho\endm
\end{split}\end{equation}
Here the coefficients $\alpha_{j}(s)$ etc.~depend on $\lambda_{j},\lambda_{j+1}, \gamma_{j}, \gamma_{j+1}, \epsilon_{j}$, and are given by \eqref{root1}-\eqref{root4} applied to $N(\lambda_{j},\lambda_{j+1}, \gamma_{j}, \gamma_{j+1}, \epsilon_{j})$. Now we claim the following {\bf{bound}} 
\[
|\alpha_{j}(s)|+|\beta_{j}(s)|+|\tilde{\gamma}_{j}(s)|+|\delta_{j}(s)|\lesssim (\alpha_{0}+\delta^{2})s^{-6}+O(\delta s^{-3}(|\dot{\gamma}_{2,j+1}|+|\dot{\lambda}_{j+1}(s)|)) .
\]
To see this, we recall \eqref{root1}-\eqref{root4} and the that root modes satisfy a uniform exponential decay. Therefore it suffices to show, for some fixed $c>0$, 
\[
|\la N(\lambda_{j},\lambda_{j+1}, \gamma_{j}, \gamma_{j+1}, \epsilon_{j}), e^{-c|y|}\ra |
\lesssim (\alpha_{0}+\delta^{2})s^{-6}+O(\delta s^{-3}(|\dot{\gamma}_{2,j+1}|+|\dot{\lambda}_{j+1}(s)|)).
\]
To see this, we check this separately for the last four terms on the right-hand side of \eqref{epsvectorj'}:

\bigskip
\noindent
{\it{(1)}} We have 
\begin{equation}\nonumber\begin{split}
|\la\dot{\gamma}_{2,j+1}[ Q_{\lambda_{j}(s)}(y)-Q(y)+\epsilon_{j}], e^{-c|y|}\ra| &\lesssim|\dot{\gamma}_{2,j+1}(s)| (|\lambda_{j}(s)-1|+||\epsilon_{j}(s,.)||_{H^{1}})\\
&\lesssim |\dot{\gamma}_{2,j+1}(s)|\delta s^{-3} .
\end{split}\end{equation}
\\
{\it{(2)}} Similarly, we have 
\begin{equation}\nonumber\begin{split}
|\la i\dot{\lambda}_{j+1}(s)\lambda_{j}(s)[2+y\cdot\nabla][Q(\lambda_{j}(s)y)-Q(y)], e^{-c|y|}\ra|
\lesssim |\dot{\lambda}_{j+1}(s)| \delta s^{-3} .
\end{split}\end{equation}
\\
{\it{(3)}} Now consider the expression 
\[
\la F_{j}(Q_{\lambda_{j}(s)}, z_{\psi})+A_{j}(\epsilon_{j})-\dot{\gamma}_{1,j+1}(s) [Q_{\lambda_{j}(s)}(y)+\epsilon_{j}(s,y)], e^{-c|y|}\ra .
\]

\bigskip
As for the terms given by $F_{j}(Q_{\lambda_{j}(s)}, z_{\psi})$, i.\,e.~terms number three to six on the right-hand side of \eqref{epsmess}, as well as the terms constituting $A_{j}(\epsilon_{j})$, we deduce 
\[
|\la (-\DD)^{-1}(Q_{\lambda(s)}^{2}(y))s^{-2}e^{\frac{iy^{2}}{4s}}z_{\psi}(\frac{1}{s}, \frac{y}{s}), e^{-c|y|}\ra|\lesssim \alpha_{0}s^{-2-N} .
\]
This is of the desired form provided that $N\geq 4$. Next, the fourth term in \eqref{epsmess} is seen to combine with the second term in $A_{j}(\epsilon_{j})$ to essentially cancel against  $\dot{\gamma}_{1,j+1}(s)Q_{\lambda_{j}(s)}(y)$. That is, we find
\begin{equation}\nonumber\begin{split}
&|\la  s^{-2}(-\DD)^{-1}(|z_{\psi}|^{2})(\frac{1}{s},\frac{y}{s})Q_{\lambda(s)}(y)\\&+(-\DD)^{-1}(2\Re(e^{i(s+\gamma(s))}\epsilon_{j}(s,y)\overline{s^{-2}e^{\frac{iy^{2}}{4s}}z_{\psi}}(\frac{1}{s},\frac{y}{s})))Q_{\lambda(s)}(y)-\dot{\gamma}_{1,j+1}(s)Q_{\lambda_{j}(s)}(y), e^{-c|y|}\ra|\\&\lesssim \alpha_{0}s^{-N},
\end{split}\end{equation}
which is again as desired as long as $N\geq 6$. Note that we obtain the same type of cancellation for the expression 
\begin{equation}\nonumber\begin{split}
&\la  s^{-2}(-\DD)^{-1}(|z_{\psi}|^{2})(\frac{1}{s},\frac{y}{s})\epsilon_{j}(s,y)\\&+(-\DD)^{-1}(2\Re(e^{i(s+\gamma(s))}\epsilon_{j}(s,y)\overline{s^{-2}e^{\frac{iy^{2}}{4s}}z_{\psi}}(\frac{1}{s},\frac{y}{s})))\epsilon_{j}(s,y)-\dot{\gamma}_{1,j+1}(s)\epsilon_{j}(s,y), e^{-c|y|}\ra,
\end{split}\end{equation}
which contains the first and last term in $B_{j}(\epsilon_{j}^{2})$. Further, all terms in $F_{j}(Q_{\lambda_{j}(s)}, z_{\psi})$ and $A_{j}(\epsilon_{j})$ which contain a product 
\[
Q_{\lambda_{j}(s)}(y)z_{\psi}(\frac{1}{s}, \frac{y}{s})
\]
is again negligible, since it is necessarily of size $O(\alpha_{0}s^{-N})$, again acceptable if $N\geq 6$. 
\\

\noindent
{\it{(4)}} Using the bound for $\epsilon_{j}(s,y)$, we easily get 
\[
|\la B_{j}(\epsilon_{j}^{2})+C_{j}(\epsilon_{j}^{3})+\DD_{\lambda_{j}(s)}L\epsilon_{j}, e^{-c|y|}\ra|\lesssim \delta^{2} s^{-6} .
\]
This concludes the proof of the {\bf{bound}} above. 
\\

Continuing with the proof of the lemma, we now write 
\[
\bm\epsilon_{j+1}\\ \overline{\epsilon_{j+1}}\endm
=\bm\epsilon_{j+1}\\ \overline{\epsilon_{j+1}}\endm_{\text{root}}+\bm\epsilon_{j+1}\\ \overline{\epsilon_{j+1}}\endm_{\text{non-root}}
\]
Write 
\begin{equation}\nonumber\begin{split}
&\bm\epsilon_{j+1}\\ \overline{\epsilon_{j+1}}\endm_{\text{root}}\\&=
iA_{j+1}(s)\bm i Q\\ -iQ\endm+iB_{j+1}(s)\bm 2Q+y\cdot\nabla Q\\ 2Q+y\cdot\nabla Q\endm +i\Gamma_{j+1}(s)\bm i |y|^{2}Q\\ -iQ|y|^{2}\endm\\&+i\Delta_{j+1}(s)\bm\rho\\ \rho\endm
\end{split}\end{equation}
We will choose $\gamma_{2,j+1}(s)$, $\lambda_{j+1}(s)$ in such fashion that $A_{j+1}(s)$, $B_{j+1}(s)$ vanish. To solve for $\Delta_{j+1}(s)$, $\Gamma_{j+1}(s)$, we proceed as in the preceding section: 
we put 
\[
\partial_{s}\Delta_{j+1}(s)=\delta_{j}(s),\,\Delta_{j+1}(\infty)=0,
\]
as well as 
\[
\tilde{\gamma}_{j}(s)-\Delta_{j+1}(s)=\partial_{s}\Gamma_{j+1}(s),\,\Gamma_{j+1}(\infty)=0 .
\]
Note that
\[
|\Gamma_{j+1}(s)|+|\Delta_{j+1}(s)|\lesssim (\alpha_{0}+\delta^{2})s^{-4}+O(\delta s^{-1}(|\dot{\gamma}_{2,j+1}|+|\dot{\lambda}_{j+1}|) .
\]
Furthermore, our choices for $\Gamma_{j+1}(s)$, $\Delta_{j+1}(s)$ imply that 
\begin{equation}\nonumber\begin{split}
&(i\partial_{s}+\calH)[\Delta_{j+1}(s)\bm\rho\\ \rho\endm+\Gamma_{j+1}(s)\bm i |y|^{2}Q\\ -i|y|^2 Q\endm]
\\&=i\tilde{\gamma}_{j}(s)\bm i |y|^{2}Q\\ -i|y|^2 Q\endm+i\delta_{j}(s)\bm\rho\\ \rho\endm
-4i\Gamma_{j+1}(s)\bm 2Q+y\cdot\nabla Q\\  2Q+y\cdot\nabla Q\endm
\end{split}\end{equation}
Now we choose $\lambda_{j+1}$, $\gamma_{2,j+1}$ as follows:
\[
\dot{\lambda}_{j+1}(s)+\beta_{j}(s)-4\Gamma_{j+1}(s)=0,\,\lambda_{j+1}(\infty)=1,
\]
\[
-\dot{\gamma}_{2,j+1}(s)+\alpha_{j}(s)=0, \gamma_{2,j+1}(\infty)=0 .
\]
Of course, the functions $\Gamma_{j+1}(s)$, $\beta_{j}(s)$, $\alpha_{j}(s)$ depend implicitly and linearly (but with small coefficient) on $\dot{\lambda}_{j+1}(s), \dot{\gamma}_{2,j+1}(s)$, so we use the implicit function theorem here to solve these equations. We immediately obtain the bound 
\[
|\dot{\lambda}_{j+1}(s)|\lesssim (\alpha_{0}+\delta^{2})s^{-4},\,|\dot{\gamma}_{2,j+1}(s)|\lesssim (\alpha_{0}+\delta^{2})s^{-6} .
\]
We then set $A_{j+1}(s)=0$, $B_{j+1}(s)=0$, and upon setting 
\[
\bm\epsilon_{j+1}\\ \overline{\epsilon_{j+1}}\endm=\Delta_{j+1}(s)\bm\rho\\ \rho\endm+\Gamma_{j+1}(s)\bm i |y|^{2}Q\\ -i|y|^2 Q\endm+\bm\epsilon_{j+1}\\ \overline{\epsilon_{j+1}}\endm_{\text{non-root}},
\]
reduce to solving 
\[
(i\partial_{s}+\calH)\bm\epsilon_{j+1}\\ \overline{\epsilon_{j+1}}\endm_{\text{non-root}}=
\bm N(\lambda_{j},\lambda_{j+1}, \gamma_{j}, \gamma_{j+1}, \epsilon_{j})\\ -\overline{ N(\lambda_{j},\lambda_{j+1}, \gamma_{j}, \gamma_{j+1}, \epsilon_{j})}\endm_{\text{non-root}} .
\]
This we do as in the preceding section by setting
\[
\bm\epsilon_{j+1}\\ \overline{\epsilon_{j+1}}\endm_{\text{non-root}}(t,.)=\int_{t}^{\infty}e^{i(t-s)\calH}\bm N(\lambda_{j},\lambda_{j+1}, \gamma_{j}, \gamma_{j+1}, \epsilon_{j})\\ -\overline{ N(\lambda_{j},\lambda_{j+1}, \gamma_{j}, \gamma_{j+1}, \epsilon_{j})}\endm_{\text{non-root}}(s,.)ds .
\]
Recall that the operator $e^{it\calH}$ acts bounded in the $H^{1}$ sense on functions which project trivially onto the root part, see e.\,g.~the proof of Proposition~\ref{prop:subiteration}. We now establish the bound 
\[
||\bm N(\lambda_{j},\lambda_{j+1}, \gamma_{j}, \gamma_{j+1}, \epsilon_{j})\\ -\overline{ N(\lambda_{j},\lambda_{j+1}, \gamma_{j}, \gamma_{j+1}, \epsilon_{j})}\endm_{\text{non-root}}(s,.)||_{H^{1}}\lesssim (\alpha_{0}+\delta^{2})s^{-4} .
\]
This we do by treating the various components of $\bm N(\lambda_{j},\lambda_{j+1}, \gamma_{j}, \gamma_{j+1}, \epsilon_{j})\\ -\overline{ N(\lambda_{j},\lambda_{j+1}, \gamma_{j}, \gamma_{j+1}, \epsilon_{j})}\endm$:
\\
\bigskip
\noindent
{\it{(1)}} The first term, which corresponds to the fourth expression in \eqref{epsvectorj'}, is estimated by 
\begin{equation}\nonumber\begin{split}
&||\dot{\gamma}_{2,j+1}(s)\bm Q_{\lambda_{j}(s)}(y)-Q(y)+\epsilon_{j}\\ -[Q_{\lambda_{j}(s)}(y)-Q(y)+\overline{\epsilon_{j}}]\endm||_{H^{1}}\lesssim |\dot{\gamma}_{2,j+1}(s)| (|\lambda_{j}(s)-1|+||\epsilon_{j}(s,.)||_{H^{1}})\\&\lesssim 
(\alpha_{0}+\delta^{2})s^{-8} .
\end{split}\end{equation}
{\it{(2)}} Similarly, we estimate 
\begin{equation}\nonumber\begin{split}
&||\bm i\dot{\lambda}_{j+1}(s)\lambda_{j}(s)[2+y\cdot\nabla][Q(\lambda_{j}(s)y)-Q(y)]\\ i\dot{\lambda}_{j+1}(s)\lambda_{j}(s)[2+y\cdot\nabla][Q(\lambda_{j}(s)y)-Q(y)]\endm||_{H^{1}}
\lesssim |\dot{\lambda}_{j+1}(s)| (|\lambda_{j}(s)-1)\\
&\lesssim (\alpha_{0}+\delta^{2})s^{-7} .
\end{split}\end{equation}
\\
{\it{(3)}} Recalling the constituents of $F_{j}(Q_{\lambda_{j}(s)}(y), z_{\psi})$, we have 
\[
||(-\DD)^{-1}(Q_{\lambda(s)}^{2}(y))s^{-2}e^{\frac{iy^{2}}{4s}}z_{\psi}(\frac{1}{s}, \frac{y}{s})||_{H^{1}}\lesssim \alpha_{0}s^{-4} .
\]
Further, as in the proof of the {\bf{bound}} further above we take advantage of our choice of $\dot{\gamma}_{1,j+1}$ to estimate 
\begin{equation}\nonumber\begin{split}
&|| s^{-2}(-\DD)^{-1}(|z_{\psi}|^{2})(\frac{1}{s},\frac{y}{s})Q_{\lambda(s)}(y)\\&+(-\DD)^{-1}(2\Re(e^{i(s+\gamma(s))}\epsilon_{j}(s,y)\overline{s^{-2}e^{\frac{iy^{2}}{4s}}z_{\psi}}(\frac{1}{s},\frac{y}{s})))Q_{\lambda(s)}(y)-\dot{\gamma}_{1,j+1}(s)Q_{\lambda_{j}(s)}(y)||_{H^{1}}\\&\lesssim \alpha_{0}s^{-N},
\end{split}\end{equation}
which is acceptable if $N\geq 4$. The same argument applies to 
\begin{equation}\nonumber\begin{split}
&s^{-2}(-\DD)^{-1}(|z_{\psi}|^{2})(\frac{1}{s},\frac{y}{s})\epsilon_{j}(s,y)\\&+(-\DD)^{-1}(2\Re(e^{i(s+\gamma(s))}\epsilon_{j}(s,y)\overline{s^{-2}e^{\frac{iy^{2}}{4s}}z_{\psi}}(\frac{1}{s},\frac{y}{s})))\epsilon_{j}(s,y)-\dot{\gamma}_{1,j+1}(s)\epsilon_{j}(s,y)
\end{split}\end{equation}
Also, as in the proof of the {\bf{bound}} further above, all terms in $F_{j}(Q_{\lambda_{j}(s)}(y), z_{\psi})$ as well as $A_{j}(\epsilon_{j})$ which contain a product $Q_{\lambda_{j}(s)}(y)z_{\psi}(\frac{1}{s}, \frac{y}{s})
$ are of size $O(\alpha_{0}s^{-N})$  when estimated with respect to the $||.||_{H^{1}}$-norm. \\

\noindent
{\it{(4)}} The terms at least quadratic in $\epsilon_{j}$ are all of size $O(\delta^{2}s^{-6})$. We consider here the cubic term $C(\epsilon_{j}^{3})$. There we can estimate 
\[
||\DD^{-1}(|\epsilon_{j}^{2}|)\epsilon_{j}||_{\dot{H}^{1}}
\lesssim ||\DD^{-1}2\Re(\nabla\epsilon_{j}\overline{\epsilon_{j}})\epsilon_{j}||_{L^{2}}+
||\DD^{-1}(|\epsilon_{j}|^{2})\nabla\epsilon_{j}||_{L^{2}} .
\]
For the first term on the right, we have 
\[
 ||\DD^{-1}2\Re(\nabla\epsilon_{j}\overline{\epsilon_{j}})\epsilon_{j}||_{L^{2}}
 \lesssim ||\DD^{-1}2\Re(\nabla\epsilon_{j}\overline{\epsilon_{j}})||_{L^{4}}||\epsilon_{j}||_{L^{4}}
\lesssim ||\nabla\epsilon_{j}||_{L_{x}^{2}}|\epsilon_{j}||_{\dot{H}^{1}}^{2}\lesssim \delta^{3}s^{-9} ,
\]
where we have used the Sobolev embedding. Furthermore, by Hardy's inequality, we obtain
\[
||\DD^{-1}(|\epsilon_{j}|^{2})\nabla\epsilon_{j}||_{L^{2}}\lesssim ||\epsilon_{j}||_{\dot{H}^{1}}^{3}\lesssim \delta^{3}s^{-9} .
\]
The expression $||\DD^{-1}(|\epsilon_{j}^{2}|)\epsilon_{j}||_{L^{2}}$ is estimated similarly. \\

\medskip
\noindent
{\it{(5)}} The term $\DD_{\lambda_{j}(s)}L\epsilon_{j}$ is estimated by 
\[
||\DD_{\lambda_{j}(s)}L\epsilon_{j}||_{H^{1}}\lesssim ||\epsilon_{j}||_{H^{1}}|\lambda_{j}(s)-1|\lesssim \delta^{2}s^{-6} .
\]

The proof of Lemma \ref{bwiteration} is now concluded by choosing $\alpha_{0}, \delta$ small enough in the bounds for $\lambda_{j+1}, \gamma_{2,j+1}$, $\epsilon_{j+1}(s,y)$. \end{proof}


\subsection{Proof of Proposition~\ref{prop:BW2}}


In order to complete the proof of Proposition~\ref{prop:BW2}, we need the convergence of the $\epsilon_{j}, \gamma_{j}, \lambda_{j}$. This follows from the next lemma
\begin{lemma}Put $\epsilon_{0}(s,y)=0$, $\lambda_{0}(s)=1$, $\gamma_{0}(s)=0$, and define $\epsilon_{j}(s,y), \gamma_{j}(s), \lambda_{j}(s)$ inductively as above. Then if $\alpha_{0}>0$ is chosen small enough, the sequence $\{\epsilon_{j}(s,y)\}_{j\geq 0}$ converges in the $H^{1}$-topology, and satisfies uniform bounds 
\[
||\epsilon_{j}(s,y)||_{H^{1}}\leq \delta s^{-3}
\]
for suitable $\delta=\delta(\alpha_{0})$. Furthermore, the sequences $\{\lambda_{j}(s)\}_{j\geq 0}, \{\gamma_{j}(s)\}_{j\geq 0}$ converge in the uniform $C^{1}$ topology and satisfy the bounds 
\[
|\dot{\gamma}_{j}(s)|\leq \delta s^{-3},\,|\dot{\lambda}_{j+1}|\leq \delta s^{-3} .
\]
\end{lemma}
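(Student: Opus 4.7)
The plan is to separate the argument into two parts: first, verify uniform bounds on all the iterates by induction, and second, promote this to a genuine convergence statement by a contraction argument on the differences. For the uniform bounds, observe that the base case $\epsilon_0 = 0$, $\lambda_0 \equiv 1$, $\gamma_0 \equiv 0$ trivially satisfies the hypotheses $\|\epsilon_0\|_{H^1_y} \le \delta s^{-3}$, $|\dot\gamma_{2,0}(s)| \le \delta s^{-3}$, $|\dot\lambda_0(s)| \le \delta s^{-4}$. Lemma \ref{bwiteration} (applied with the same $\delta$) then gives the inductive step, so all iterates satisfy the stated size bounds uniformly in $j$, provided $\alpha_0 = \alpha_0(\delta)$ is chosen small enough.

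For the convergence, I would set $D\epsilon_j := \epsilon_{j+1} - \epsilon_j$, $D\gamma_j := \gamma_{j+1} - \gamma_j$, $D\lambda_j := \lambda_{j+1} - \lambda_j$, and form difference equations by subtracting \eqref{epsvectorj} at index $j+1$ from the same equation at index $j$. The key observation is that the explicit source term $F_j(Q_{\lambda_j(s)}, z_\psi)$ cancels to leading order, because it depends on the $j$-th iterate only through $\lambda_j$ and $\gamma_j$, so its difference is linear in $D\lambda_{j-1}, D\gamma_{j-1}$ with coefficients of size $O(\alpha_0 s^{-N+c})$ for some fixed $c$. Similarly, the nonlinear pieces $A_j(\epsilon_j)$, $B_j(\epsilon_j^2)$, $C_j(\epsilon_j^3)$ are either quadratic in $\epsilon_j$ or contain a factor $z_\psi$, so their differences are controlled by $(\alpha_0 + \delta)\bigl(\|D\epsilon_{j-1}\|_{H^1_y} + |D\gamma_{j-1}| + |D\lambda_{j-1}|\bigr)$ weighted by an appropriate negative power of $s$. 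Repeating the whole modulation/root-projection/Duhamel scheme of Lemma \ref{bwiteration} on these difference equations, with exactly the same modulation choices that define $\gamma_{2,j+1}$ and $\lambda_{j+1}$, yields an estimate of the schematic form
\begin{equation*}
\sup_{s \ge s_0} s^3 \|D\epsilon_j(s)\|_{H^1_y} + \sup_{s \ge s_0} s^3 |D\dot\gamma_j(s)| + \sup_{s \ge s_0} s^4 |D\dot\lambda_j(s)| \;\le\; \kappa \cdot (\text{same quantity with index } j-1),
\end{equation*}
where $\kappa = C(\alpha_0 + \delta) < 1$ once $\alpha_0$ and $\delta$ are small. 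This gives geometric decay of $D\epsilon_j, D\dot\gamma_j, D\dot\lambda_j$, whence $\{\epsilon_j\}$ is Cauchy in $C^0_s H^1_y([s_0,\infty) \times \mathbb{R}^4)$ and $\{\gamma_j\},\{\lambda_j\}$ are Cauchy in $C^1_s([s_0,\infty))$.

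The limits $\epsilon := \lim \epsilon_j$, $\gamma := \lim \gamma_j$, $\lambda := \lim \lambda_j$ inherit the uniform bounds and, by passing to the limit in \eqref{epsvectorj}, solve the desired $\epsilon$-equation with modulation functions $\gamma, \lambda$; the terminal conditions $\gamma(\infty)=0$, $\lambda(\infty)=1$ pass to the limit because convergence is uniform on $[s_0,\infty)$. The main obstacle I expect is the bookkeeping of the contraction estimate on the difference equations: one must verify that every nonlinear term (including the nonlocal terms with kernel $(-\Delta)^{-1}$, the modulation correction $\Delta_{\lambda_j(s)} L\epsilon_j$, and the cancellations produced by the choice of $\dot\gamma_{1,j+1}$) contributes to the difference at least one factor of $\alpha_0 + \delta$ with enough decay in $s$ to beat the polynomial losses picked up from the polynomial linear evolution bound for $e^{it\mathcal{H}}$. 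This is where the largeness of $N$ enters, to ensure that the $z_\psi$-dependent source term contributes differences of size $O(\alpha_0 s^{-N})$, and where smallness of both $\alpha_0$ and $\delta$ is used to make the Lipschitz constant of the iteration map strictly less than one.
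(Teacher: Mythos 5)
Your proposal matches the paper's argument: the uniform bounds follow by induction from Lemma \ref{bwiteration}, and the convergence is obtained by forming the difference equations for $\epsilon_{j+1}-\epsilon_j$, $\gamma_{j+1}-\gamma_j$, $\lambda_{j+1}-\lambda_j$ and rerunning the modulation/root-projection/Duhamel scheme to get a contraction with factor $O(\alpha_0+\delta)$. The paper states exactly this and omits the details, so your write-up is, if anything, more explicit than the original.
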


The only statement contained in the lemma that requires proof is the convergence of the iterates. However, this follows in standard fashion by forming the equations for the differences $\epsilon_{j+1}-\epsilon_{j}$ etc.~and repeating the arguments in the proof of the preceding lemma. Details are omitted. 
\\

Now let $\epsilon(s,y):=\lim_{j\rightarrow\infty}\epsilon_{j}(s,y)$, and similarly for $\lambda(s), \gamma(s)$. To conclude the proof of Proposition~\ref{prop:BW2}, we need to establish the bound 
\[
|| y \epsilon(s,.)||_{L^{2}}\lesssim s^{-2} .
\]
However, for this we note that $y \epsilon(s,.)$ satisfies the equation 
\begin{equation}\nonumber\begin{split}
&(i\partial_{s}+\calH)\bm y\epsilon\\ \overline{y\epsilon}\endm\\&=-\dot{\gamma}(s)\bm y(Q_{\lambda
(s)}(y)+\epsilon)\\- y(Q_{\lambda(s)}(y)+\epsilon)\endm+\bm yi\dot{\lambda}(s)\lambda(s)[2+y\cdot\nabla]Q(\lambda(s)y)\\ yi\dot{\lambda}(s)\lambda(s)[2+y\cdot\nabla]Q(\lambda(s)y)\endm\\&+\bm yF(Q_{\lambda(s)}, z_{\psi})\\ -\overline{yF(Q_{\lambda(s)}, z_{\psi})}\endm+\bm y[A(\epsilon)+B(\epsilon^{2})+C(\epsilon^{3})+\DD_{\lambda(s)}L\epsilon]\\ -\overline{y[A(\epsilon)+B(\epsilon^{2})+C(\epsilon^{3})+\DD_{\lambda(s)}L\epsilon]}\endm+2\bm \nabla\epsilon\\ -\overline{\nabla\epsilon}\endm
\end{split}\end{equation}
Then we replace the operator $\calH$ by the more elementary $\bm \DD-1& 0\\ 0&-\DD+1\endm$ and moving the local linear terms over to the right-hand side. Thus we get the equation
\begin{equation}\nonumber\begin{split}
&(i\partial_{s}+\bm \DD-1& 0\\ 0&-\DD+1\endm)\bm y\epsilon\\ \overline{y\epsilon}\endm\\&=-\dot{\gamma}(s)\bm y(Q_{\lambda
(s)}(y)+\epsilon)\\- y(Q_{\lambda(s)}(y)+\epsilon)\endm+\bm yi\dot{\lambda}(s)\lambda(s)[2+y\cdot\nabla]Q(\lambda(s)y)\\ yi\dot{\lambda}(s)\lambda(s)[2+y\cdot\nabla]Q(\lambda(s)y)\endm\\&+\bm yF(Q_{\lambda(s)}, z_{\psi})\\ -\overline{yF(Q_{\lambda(s)}, z_{\psi})}\endm+\bm y[A(\epsilon)+B(\epsilon^{2})+C(\epsilon^{3})+\DD_{\lambda(s)}L\epsilon]\\ -\overline{y[A(\epsilon)+B(\epsilon^{2})+C(\epsilon^{3})+\DD_{\lambda(s)}L\epsilon]}\endm+2\bm \nabla\epsilon\\ -\overline{\nabla\epsilon}\endm\\
&-\bm \DD^{-1}(Q^{2})y\epsilon\\ -\overline{ \DD^{-1}(Q^{2})y\epsilon}\endm-2\bm \DD^{-1}\Re(Qy\epsilon)Q\\ -\DD^{-1}\Re(Qy\epsilon)Q\endm
\end{split}\end{equation}

The bound for $||y\epsilon||_{L^{2}}$ is now obtained by making the bootstrapping assumption 
\[
||y\epsilon(s,y)||_{L_{y}^{2}}\leq \delta s^{-2}
\]
and recovering it by repeating the same estimates as in lemma~\ref{bwiteration}, as well as the already established bound for $||\nabla\epsilon||_{L^{2}}$, and using elementary bounds for the linear evolution of $\bm \DD-1& 0\\ 0&-\DD+1\endm$. This completes the proof of Proposition~\ref{prop:BW2}. $\blacksquare$

\subsection*{Remark about Nonradial Setting}

We mention here that essentially the same method presented above carries over to the non-radial case. Note that one has additional root modes due to the presence of translational symmetries as well as Galilei transformations, but these do not increase the algebraic degeneracy of the linear operator $\calH$. The main difference has to do with the formulation of the vanishing conditions on $\psi(x)$. Note that we crucially use the high-order vanishing of the expression
\[
\DD^{-1}(|z_{\psi}|^{2})(\frac{1}{s}, \frac{y}{s})-\DD^{-1}(|z_{\psi}|^{2})(\frac{1}{s}, 0) 
\]
at the origin $y=0$, which in turn follows in the radial case by simply forcing sufficient vanishing for $z_{\psi}$ at the origin. This is a consequence of Taylor expanding $z_{\psi}$. In the non-radial setting, we then arrive at the conditions 
\[
\nabla_{y}^{k}\partial_{s}^{l}\DD^{-1}(|z_{\psi}|^{2})(0,0)=0,\quad \mbox{for $1\leq k\leq N,\,0\leq l\leq N$}.
\]
Of course, these conditions can be formulated purely in terms of $\psi(x)$ upon using the equation to eliminate the operators $\partial_{s}$. Needless to say, these conditions appear to be rather cumbersome nonlinear vanishing conditions. For example, the simplest one corresponding to $k=1$, $l=0$ is given by $\DD^{-1}(\Re(\nabla \psi \overline{\psi}))(0,0)=0.$

\section{Ground States and Spectral Properties} \label{sec:spectral}

In this section, we consider ground state solutions $Q \in H^1(\RR^4)$ of
\begin{equation} \label{eq:Qground}
-\DD Q  + Q - \big (|x|^{-2} \ast |Q|^2 \big ) Q =  0.
\end{equation}
Apart from uniqueness of $Q$ (which follows from adapting an argument by E.~Lieb in \cite{Lieb1977}), we are mainly concerned with proving spectral properties of the linearized operator
\begin{equation}
L_+ = -\DD + 1 - \big ( |x|^{-2} \ast |Q|^2 \big ) - 2 Q \big ( |x|^{-2} \ast (Q \cdot) \big ) .
\end{equation} 
Due to the nonlocal term in $L_+$, standard ODE methods (like Sturm's oscillation theorem) are not our disposal to study the behavior of radial eigenfunctions of $L_+$. By contrast, such standard arguments play an important in the study of linearized operators for NLS with {\em local} nonlinearities. Here, however, we have to use the very structure of $L_+$, reflected by the fact $|x|^{-2}$ is (up to multiplicative constant) the Green's function of $\Delta$ in $\RR^4$. The technical main result of this section will be Theorem \ref{th:ker} below, which states that
\begin{equation}
\mathrm{ker} \, L_+ = \{ 0 \} \quad \mbox{when $L_+$ acts on $L^2_{\rm rad}(\RR^4)$ } .
\end{equation}
With the help of this nondegeneracy result, the technical Lemmas \ref{lem:H_inf} and \ref{lem:H_t} (used in Section \ref{sec:proof:main} above) about the matrix operators $\Hinf$ and $\Ht$ then follow by standard arguments, as detailed in Subsections \ref{subsec:Hinf} and \ref{subsec:Ht}, respectively. Also, the nondegeneracy of $L_+$ plays an essential role in Section \ref{sec:Qt} when constructing the modified profiles $\Qt$ by means of an implicit function type argument.

\subsection{Uniqueness of Ground States}

We have the following result.

\begin{theorem} \label{th:Qunique}
The equation 
\begin{equation} \label{eq:Qground}
-\DD Q + Q - \big ( \frac{1}{|x|^2} \ast |Q|^2 \big ) Q = 0
\end{equation}
has a unique positive, radial solution $Q(r) > 0$ in $H^1(\RR^4)$.
\end{theorem}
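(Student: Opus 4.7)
The plan is to follow E.~Lieb's strategy from \cite{Lieb1977}, adapting it from the 3D Choquard case to the 4D case $\Phi(x) = |x|^{-2}$. The key structural fact exploited throughout is that $|x|^{-2}$ is, up to a multiplicative constant $c_4 = 4\pi^2$, the Green's function of $-\Delta$ on $\RR^4$. Setting $U = |x|^{-2} \ast Q^2$, equation \eqref{eq:Qground} is equivalent to the \emph{local} elliptic system
\[
-\Delta Q + Q = U Q, \qquad -\Delta U = c_4\, Q^2,
\]
with $Q, U \to 0$ at infinity. This converts the nonlocal ground state problem into a coupled local system amenable to ODE analysis in the radial class.

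For existence of at least one positive radial solution, the standard variational approach works: minimize the Weinstein-type quotient
\[
J(u) = \frac{\|\nabla u\|_{L^2}^2 \|u\|_{L^2}^2}{\int_{\RR^4} \bigl( |x|^{-2} \ast |u|^2 \bigr)|u|^2\, dx},
\]
which is well-defined and coercive thanks to the Hardy-Littlewood-Sobolev inequality in the $L^2$-critical regime; minimizers exist by Lions' concentration-compactness and, after rescaling, solve \eqref{eq:Qground}. That any positive $H^1$ solution must be a translate of a radially decreasing function then follows from Riesz's rearrangement inequality (applied to the nonlocal term) together with the P\'olya-Szeg\H{o} inequality (on the Dirichlet energy), both strict unless $Q$ is already radially decreasing up to translation.

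The uniqueness step is the main technical obstacle and requires adapting Lieb's argument to the 4D setting. Working in the radial class, the system reads
\[
Q'' + \tfrac{3}{r} Q' - Q + UQ = 0, \qquad U'' + \tfrac{3}{r} U' = -c_4\, Q^2,
\]
with Neumann conditions at $r=0$ and decay at infinity. Following Lieb, I would parametrize decaying solutions by their value at the origin and show that the mapping from initial data to asymptotic behavior at infinity is injective on the admissible branch. Concretely, two candidate radial positive solutions $Q_1, Q_2$ can first be brought into a common normalization by a rescaling argument (exploiting the near scale-invariance of the quartic nonlocal term modulo the coefficient in front of $Q$); a Gronwall-type ODE comparison on the resulting system, combined with a strict convexity/monotonicity identity analogous to the one Lieb exploits in the 3D Choquard problem, then forces $Q_1 \equiv Q_2$. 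The delicate point is that these identities rely on integration by parts together with positivity of both $Q$ and $U$, so they do survive the change of dimension and of convolution exponent, but the bookkeeping with the modified radial weight $\tfrac{3}{r}$ and the constant $c_4$ has to be reexamined line by line to make sure every inequality remains strict.
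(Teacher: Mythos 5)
Your overall strategy --- existence via minimization of the Weinstein quotient, uniqueness by adapting Lieb's argument using that $|x|^{-2}$ is a multiple of the Green's function of $-\DD$ on $\RR^4$ --- is exactly the route the paper takes, and the existence half is fine. But the uniqueness half, which is the entire content of the theorem, is only a plan: the two mechanisms that actually carry Lieb's proof are absent. The decisive structural input is Newton's theorem for radial functions in $\RR^4$, which writes $\big( |x|^{-2} \ast Q^2 \big)(r) = -\int_0^r K(r,s) Q(s)^2 \, ds + 2\pi^2 \int_0^\infty Q(s)^2 s \, ds$ with $K(r,s) = 2\pi^2 s \big( 1 - s^2/r^2 \big) \geq 0$ for $s \leq r$. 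This turns the nonlocal potential into a \emph{Volterra} functional of $Q|_{[0,r]}$ with nonnegative kernel, plus a constant; absorbing that constant into an effective eigenvalue $e$ and normalizing $e=1$ by the $L^2$-critical rescaling $Q \mapsto e^{-1} Q(e^{-1/2} \cdot)$ puts any two candidate solutions into the \emph{same} initial-value problem, differing only through $Q(0)$. Your "local system" reformulation $-\DD U = c_4 Q^2$ is equivalent but hides precisely this causal, sign-definite structure, and without it you cannot even identify which constant the rescaling is supposed to normalize, nor why $Q > R$ on $[0,r]$ forces $U_Q > U_R$ at $r$.

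More seriously, a "Gronwall-type ODE comparison" cannot close the argument: Gronwall gives local uniqueness of the IVP for \emph{equal} data at $r=0$ (a minor step), whereas the whole difficulty is to exclude two positive decaying solutions with $Q(0) > R(0)$. The actual mechanism is twofold. (i) A Wronskian identity, $r^3 \big( Q'R - QR' \big) = \int_0^r s^3 Q R \, (U_Q - U_R) \, ds$ with $U_Q(r) = \int_0^r K(r,s) Q(s)^2 ds$, combined with a first-crossing argument: at a first intersection $r_*$ the left side is $\leq 0$ while the right side is $>0$ because $K \geq 0$ makes $U_Q - U_R > 0$ on $(0,r_*)$; hence $Q$ and $R$ never cross and $Q > R$, $U_Q > U_R$ everywhere. (ii) A spectral contradiction: $Q$ and $R$ are positive eigenfunctions, hence ground states with the same eigenvalue, of $-\DD + U_Q$ and $-\DD + U_R$, so testing $-\DD + U_R$ with $Q$ yields $\| Q \|_{L^2}^2 \leq \langle Q, (-\DD + U_Q) Q \rangle - \langle Q, (U_Q - U_R) Q \rangle < \| Q \|_{L^2}^2$. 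Neither step is "bookkeeping" to be checked line by line; they are the proof, and your proposal does not contain them.
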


\begin{proof}
Existence of a radial, positive solution $Q \in H^1(\RR^4)$ follows from well-known arguments. For example, by standard variational methods and rearrangement inequalities, we deduce that there existst a radial, positive $Q \in H^1(\RR^4)$ such that
\begin{equation}
J(Q) = \inf_{f \in H^1(\RR^4) \setminus \{ 0 \} } J(f) , 
\end{equation}
where $J(f)$ is the Weinstein functional given by
\begin{equation}
J(f) = \frac{ \| \nabla f \|_{L^2_x}^2 \| f \|_{L^2_x}^2 }{ \int_{\RR^4} ( |x|^{-2} \ast |u|^2 ) |u|^2 \, dx } .
\end{equation}
One easily checks that any minimizer $Q(r) > 0$ satisfies (\ref{eq:Qground}) after a suitable rescaling $Q(r) \mapsto \alpha Q(\beta r)$ with some $\alpha, \beta > 0$.

The uniqueness proof, however, strongly depends on specific features of equation (\ref{eq:Qground}). Here, by adapting E.~Lieb's uniqueness proof in \cite{Lieb1977} for ground states $\phi \in H^1(\RR^3)$ of the Choquard-Pekar equation (in $d=3$ dimensions)
\begin{equation}
-\DD \phi + \phi - \big ( \frac{1}{|x|} \ast |\phi|^2 \big ) \phi = 0 \quad \mbox{in $\RR^3$},
\end{equation}
we can deduce the analogous result for (\ref{eq:Qground}) in $d=4$ dimensions. For the reader's convenience and also for later use, we now present our adaptation of Lieb's uniqueness proof to equation (\ref{eq:Qground}) with some modifications.

\medskip
Recall that Newton's theorem in $\RR^4$ says (note that $2 \pi^2$ is the area of the unit sphere in $\RR^4$):
\begin{equation}\label{Newton}
- \big ( |x|^{-2} \ast \rho ) (r) = \int_0^r K(r,s) \rho(s) \, ds - 2 \pi^2 \int_0^\infty \rho(s) s \, ds ,
\end{equation}
for radial functions $\rho = \rho(|x|)$ on $\RR^4$. Here 
\begin{equation} \label{eq:Kdef}
K(r,s) = 2 \pi^2 s \Big ( 1 - \frac{s^2}{r^2} \Big ) \geq 0, \quad \mbox{for $r \geq s$}.
\end{equation}
Hence, by Newton's theorem, we find that equation (\ref{eq:Qground}) for radial, real-valued $Q \in H^1(\RR^4)$ can be written as
\begin{equation}  \label{eq:Qground2}
-Q''(r) - \frac{3}{r} Q'(r) + \Big ( \int_0^r K(r,s) Q(s)^2 \, ds \Big ) Q(r) = e Q(r) ,
\end{equation}
where
\begin{equation} \label{eq:ee}
e = -1 +  2 \pi^2 \int_0^\infty Q(s)^2 s \, ds  > 0 .
\end{equation}
Note that $e >0$ follows from multiplying equation (\ref{eq:Qground}) by $Q \not \equiv 0$, integrating, and using that $K(r,s) \geq 0$ holds. Furthermore, by rescaling $Q(r) \mapsto e^{-1} Q(e^{-1/2} r)$ we can assume without loss of generality that $e=1$ holds.

Let us now suppose that $Q(r) > 0$ and $R(r) > 0$ are two positive, radial solutions of (\ref{eq:Qground}) in $H^1(\RR^4)$ such that $Q \not \equiv R$. As previously remarked, we can assume (after a rescaling) that both $Q$ and $R$ satisfy (\ref{eq:Qground2}) with $e =1$. Therefore $Q(r)$ and $R(r)$ solve the initial-value problem
\begin{equation} \label{eq:ivp}
\left \{ \begin{array}{l} \displaystyle -u''(r) - \frac{3}{r} u'(r) - u(r) + \big ( \int_0^r K(r,s) u(s)^2 \, ds \big ) u(r) = 0, \\[1ex]
u(0) = u_0, \quad u'(0) = 0 , \end{array} \right .
\end{equation}
with initial conditions $u(0) = Q(0)$ and $u(0) = R(0)$, respectively. A standard fixed-point argument shows that the initial-value problem (\ref{eq:ivp}) has unique local $C^2$-solution $u(r)$ for given $u_0 \in \RR$. Moreover, the corresponding solution $u(r)$ exists up to some maximal radius of existence $r_{\rm max} \in (0, \infty]$. In particular, we deduce that $Q(0) \neq R(0)$ must hold, since otherwise $Q \equiv R$.

Therefore we can henceforth assume that $Q(0) > R(0)$ holds, say. Then, by continuity, we have $Q(r) > R(r)$ at least on some initial interval. We now claim that in fact 
\begin{equation} \label{eq:QRlarge}
Q(r) > R(r)  \quad \mbox{for all $r \geq 0$}.
\end{equation}
To show this, we introduce the functions
\begin{equation}
U_Q(r) = \int_0^r K(r,s) Q(s)^2 \,ds \quad \mbox{and} \quad U_R(r) = \int_0^r K(r,s) R(s)^2 \, ds.
\end{equation}
Then an elementary calculation using the equation in (\ref{eq:ivp}) yields the ``Wronskian--type'' identity
\begin{equation}
\frac{d}{dr} \big \{ r^3 \big ( Q' R - Q R' ) \big \} = r^3 Q R \big (U_Q - U_R \big ) ,
\end{equation}
which, by integration, gives us
\begin{equation} \label{eq:wronsk}
r^3 \big ( Q'(r) R(r) - Q(r) R'(r) \big ) = \int_0^r s^3 Q(s) R(s) \big ( U_Q(s) - U_R(s) \big ) \, ds
\end{equation}
Next, we suppose that (\ref{eq:QRlarge}) fails to hold, i.\,e., the function $Q(r)$ intersects $R(r)$ for the first time at $r_* > 0$, say. Then the left-hand side of (\ref{eq:wronsk}) at $r=r_*$ satisfies
\begin{equation}
r_*^3 Q(r_*) ( Q'(r_*) - R'(r_*) ) \leq 0,
\end{equation}
whereas the right-hand side of (\ref{eq:wronsk}) must obey
\begin{equation}
\int_0^{r_*} s^3 Q(s) R(s) \big (U_Q(s) - U_R(s) ) \, ds > 0,
\end{equation}
since $Q(r) > 0$ and $R(r) > 0$, as well as $U_Q(r) > U_R(r)$ for $0 < r < r_*$. This contradiction shows that $Q(r)$ and $R(r)$ can never intersect and hence (\ref{eq:QRlarge}) must hold.

It remains to show that (\ref{eq:QRlarge}) also leads to a contradiction, which can be seen as follows. Consider the Schr\"odinger operators
\begin{equation}
H_Q = -\DD + U_Q \quad \mbox{and} \quad H_R = -\DD + U_R .
\end{equation}
Clearly, the strictly positive functions $Q$ and $R$ are (normalized) ground states (with eigenvalue $e=1$) for $H_Q$ and $H_R$, respectively. Therefore,
\begin{equation} \label{eq:HR}
\langle \phi, H_Q \phi \rangle \geq \| \phi \|_{L^2}^2 \quad \mbox{and} \quad \langle \phi, H_R \phi \rangle \geq \| \phi \|_{L^2}^2 
\end{equation}
for all $\phi \in H^1(\RR^4)$. Moreover, by standard arguments, we have uniqueness of ground states for $H_Q$ and $H_R$, so that equality in (\ref{eq:HR}) holds if and only if $\phi = \lambda Q$ or $\phi = \lambda R$ for some constant $\lambda$, respectively. Since equation (\ref{eq:QRlarge}) implies that $U_Q(r) > U_R(r)$ for all $r > 0$, we deduce from (\ref{eq:HR}) that
\begin{align*}
\| Q \|_{L^2}^2 & \leq \langle Q, H_R Q \rangle = \langle Q, H_Q Q \rangle - \langle Q, (U_Q-U_R) Q \rangle = \| Q \|_{L^2}^2 - \delta
\end{align*}
with some $\delta > 0$, which is a contradiction. 

This shows that equation (\ref{eq:Qground2}) cannot have two distinct positive, radial nontrivial solutions $Q \in H^1(\R^4)$. This completes the proof of Theorem \ref{th:Qunique}. \end{proof}

\subsection*{Nondegeneracy of $L_+$}

By linearizing equation (\ref{eq:Qground}) on the space of real-valued functions, we obtain the scalar nonlocal, self-adjoint operator
\begin{equation} \label{eq:Lplus}
L_+ = -\DD + 1 - \big ( |x|^{-2} \ast |Q|^2 \big ) - 2 Q \big ( |x|^{-2} \ast (Q \cdot) \big )
\end{equation}
acting on $L^2(\RR^4)$ with domain $H^2(\RR^4)$. We will now prove the important fact that $L_+$ has trivial kernel in the radial sector.

\begin{theorem} \label{th:ker}
Let $L_+$ be given by (\ref{eq:Lplus}), where $Q \in H^1(\RR^4)$ is the unique radial, positive solution from Theorem \ref{th:Qunique}. Then we have 
\[
\mathrm{ker} \, L_+ = \{ 0 \} \quad \mbox{when $L_+$ acts on $L^2_{\rm rad}(\RR^4)$.}
\]
\end{theorem}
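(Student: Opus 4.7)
The plan is to adapt to $d=4$ the nondegeneracy strategy of Lenzmann \cite{Lenzmann2008} for the $d=3$ Choquard--Hartree equation. The structural point that makes the approach work, and which is already exploited in the uniqueness proof above, is that $\frac{1}{2\pi^2}|x|^{-2}$ is the fundamental solution of $-\Delta$ on $\RR^4$. This allows me to recast the nonlocal spectral equation $L_+v=0$ as a \emph{local} coupled elliptic system. Setting $U=\frac{1}{2\pi^2}|x|^{-2}\ast Q^2$ and $W=\frac{1}{2\pi^2}|x|^{-2}\ast(Qv)$, so that $-\Delta U=Q^2$ and $-\Delta W=Qv$, and using the ground state equation rewritten as $-\Delta Q+Q-2\pi^2 UQ=0$, the condition $L_+v=0$ becomes
\[
(-\Delta+1-2\pi^2 U)\,v \;=\; 4\pi^2\,Q\,W, \qquad -\Delta W \;=\; Q\,v,
\]
together with the decay $v,W\to 0$ at infinity and the usual regularity at the origin.

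On the variational side, two ingredients restrict the potential radial kernel. A direct computation using the $Q$-equation gives $L_+Q=-2(|x|^{-2}\ast Q^2)Q$, so $\langle Q,L_+Q\rangle<0$; hence $L_+$ has at least one strictly negative radial eigenvalue. On the other hand, the fact that $Q$ realizes the minimum of the Weinstein functional
\[
J(f)=\frac{\|\nabla f\|_{L^2}^2\,\|f\|_{L^2}^2}{\int_{\RR^4}\bigl(|x|^{-2}\ast|f|^2\bigr)|f|^2\,dx}
\]
exploited in Theorem~\ref{th:Qunique} translates, via a standard second variation computation (with the Hardy--Littlewood--Sobolev nonlinearity in place of a power nonlinearity), into a Weinstein-type coercivity statement: $\langle L_+w,w\rangle\geq 0$ for every $w\in L^2_{\rm rad}(\RR^4)$ orthogonal to an explicit finite-dimensional subspace (spanned by $Q$ and by a Pohozaev-like direction in the $|x|^2 Q$ range). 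Taken together, these two facts squeeze the radial kernel of $L_+$ into a controlled, at most finite-dimensional set of candidates that can be enumerated explicitly.

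The core of the argument, and the main obstacle, is to then exclude any nontrivial radial $v\in\ker L_+$. Here I would pass to radial coordinates and use Newton's theorem in the form already used to obtain \eqref{Newton} to rewrite the coupled local system as a closed autonomous second-order ODE system for $(v,W)$ in $r=|x|$. Mimicking the Wronskian identity
\[
\tfrac{d}{dr}\bigl\{r^3(Q'v-Qv')\bigr\} \;=\; r^3 Q v \cdot (\text{sign-definite quantity})
\]
derived in the uniqueness proof of Theorem~\ref{th:Qunique}, one contracts the $v$-equation against the $Q$-equation (and likewise the $W$-equation against the $U$-equation) to extract sign and monotonicity information on $v$. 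The delicate point, and precisely the difficulty addressed in the $d=3$ Lenzmann argument, is that $W$ is itself nonlocal in $v$ through the Poisson equation $-\Delta W=Qv$, so sign and monotonicity must be propagated through both ODEs simultaneously rather than in a single scalar ODE as in the local NLS setting of Weinstein. Once $v$ is forced to be sign-definite in the radial sector, a final Schr\"odinger operator comparison, in the spirit of the last step of Theorem~\ref{th:Qunique} (strict monotonicity of the ground state eigenvalue with respect to the potential $U$), yields the desired contradiction and forces $v\equiv 0$. This last ODE/comparison step is expected to be the genuinely hard part, while the reduction to a local system and the variational inputs are essentially bookkeeping.
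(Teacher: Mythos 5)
You have correctly identified the structural starting point --- that $|x|^{-2}$ is the Green's function of $-\Delta$ on $\RR^4$, so Newton's theorem and Wronskian identities in the spirit of the uniqueness proof should drive the argument --- but the proposal stops exactly where the proof has to begin, and the endgame you sketch is not the one that works. The paper's mechanism rests on a decomposition you do not have: writing the nonlocal term via Newton's theorem splits $L_+\xi$ into a \emph{Volterra-type} operator $\mathcal{L}_+\xi$ (whose nonlocal part $2Q(r)\int_0^r K(r,s)Q(s)\xi(s)\,ds$ only sees $\xi$ on $[0,r]$, hence behaves like an ODE with local uniqueness) \emph{plus a rank-one term} $-2Q\,\sigma$ with $\sigma = 2\int_{\RR^4} Q\xi/|x|^2\,dx$ a scalar. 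The two pillars are then: (a) a growth lemma (Lemma \ref{lem:grow}) showing that any nontrivial solution of the homogeneous Volterra equation $\mathcal{L}_+v=0$ with $v'(0)=0$ satisfies $|v(r)|\gtrsim e^{\delta r}$ --- proved by the Wronskian comparison with $Q$ you allude to, plus the Carmona--Simon two-sided exponential bounds on $Q$ --- and hence is never in $L^2$; and (b) an \emph{explicit} particular solution of $\mathcal{L}_+w=\sigma Q$ built from the scaling mode $Q_1=2Q+r\partial_r Q$ via the identity $L_+Q_1=-2Q$, namely $w=\frac{\sigma}{1-\rho}Q_1$ (with $1-\rho\neq 0$ itself guaranteed by the growth lemma). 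Writing $\xi=v+w$ and running a short case analysis on $\sigma$ then forces $\xi\equiv 0$. None of this appears in your proposal.

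Two specific points in your sketch would not survive. First, the variational input does not do what you claim: knowing $\langle Q,L_+Q\rangle<0$ together with nonnegativity of $L_+$ on a finite-codimension subspace bounds the Morse index, but it does not ``squeeze the radial kernel into an explicitly enumerable set of candidates''; coercivity off a subspace says nothing about which functions lie in the kernel, and indeed the paper's proof of Theorem \ref{th:ker} makes no use of the Weinstein functional at all (the coercivity is used elsewhere, in the proof of Lemma \ref{prop:subiteration}). Second, your proposed endgame --- forcing a kernel element to be sign-definite and then comparing Schr\"odinger operators as in the last step of Theorem \ref{th:Qunique} --- is the wrong target: the sign/comparison argument in the paper is applied only to solutions of the \emph{homogeneous Volterra} equation $\mathcal{L}_+v=0$ (to prove they grow exponentially), not to the kernel element $\xi$ of the full operator $L_+$, which need not be sign-definite and which differs from such a $v$ precisely by the explicit multiple of $Q_1$. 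You flag this step yourself as ``expected to be the genuinely hard part''; it is, and it is the part that is missing.
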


\begin{proof} 
We argue by contradiction. Suppose that $\xi \in L^2_{\mathrm{rad}}(\RR^4)$ with $\xi \not \equiv 0$ satisfies
\begin{equation} \label{eq:xi}
L_+ \xi = 0 ,
\end{equation}
which, by simple bootstrap arguments, implies that $\xi \in H^k(\RR^4)$ for all $k \geq 0$. Furthermore, by Newton's theorem, we can write the left-hand side in (\ref{eq:xi}) as
\begin{equation} \label{eq:LL}
L_+ \xi = \mathcal{L}_+ \xi - 2 Q \big ( \int_{\RR^4} \frac{Q \xi}{|x|^2} \,dx \big ) ,
\end{equation}
where $\mathcal{L}_+$ is the linear operator given by
\begin{align}
(\mathcal{L}_+ v)(r) & = -v''(r) - \frac{3}{r} v'(r) + e v(r) + V(r) v(r)  \nonumber \\
& \quad  + 2 Q(r) \big ( \int_0^r K(r,s) Q(s) v(s) \, ds \big ),
\end{align}
where $e > 0$ and $K(r,s)$ are the same as in (\ref{eq:ee}) and (\ref{eq:Kdef}), respectively. Furthermore, the function $V(r)$ is defined by
\begin{equation}
V(r) = -\big ( |x|^{-2} \ast |Q|^2)(r) .
\end{equation}
As previously noted in the proof of Theorem \ref{th:Qunique}, we can henceforth assume that 
\[ e= 1, \]
which follows by rescaling $Q(r) \mapsto e^{-1} Q(e^{-1/2} r)$ if $e \neq 1$. (Likewise, the operator $L_+$ changes, but all its kernel elements are obtained by rescaling also.)

To proceed with the proof of Theorem \ref{th:ker}, we need the following auxiliary result.

\begin{lemma} \label{lem:grow}
Suppose $\mathcal{L}_+ v = 0$ with $v \not \equiv 0$ and $v'(0) = 0$. Then
\[
|v(r)| \gtrsim e^{+\delta r}, \quad \mbox{for $r \geq R$},
\]
where $\delta > 0$ and $R  >0$ are some suitable constants.
\end{lemma}

\begin{proof}[Proof of Lemma \ref{lem:grow}] First, we note that $v(0) \neq 0$ holds, since otherwise $v \equiv 0$, by the local uniqueness for the linear differential-integro equation $\mathcal{L_+} v = 0$, which follows from standard fixed point argument. 

Since $\mathcal{L}_+ v = 0$ is a linear equation, we can assume without loss of generality that 
\[ v(0) > Q(0) . \]
Next, we note that $v$ satisfies
\begin{equation} \label{eq:v}
-v''(r) - \frac{3}{r} v'(r) +  v(r) + V(r) v(r) + W(r) = 0,
\end{equation} 
where we set
\begin{equation} \label{eq:W}
W(r) = 2 Q(r) \int_0^r K(r,s) Q(s) v(s) \, ds.
\end{equation}
Clearly, the ground state $Q(r)$ satisfies
\begin{equation} \label{eq:Q}
-Q''(r) - \frac{3}{r} Q'(r) + Q(r) + V(r) Q(r) = 0 .
\end{equation}
Similar as in the proof of Theorem \ref{th:Qunique}, we find by using equations (\ref{eq:v}) and (\ref{eq:Q}) the ``Wronskian-type'' identity
\begin{equation}
r^3 ( Q v' - Q' v) = \int_0^r s^3 Q(s) W(s) \, ds.
\end{equation}
Hence we conclude (keeping in mind that $Q(r) > 0$) the identity
\begin{equation} \label{eq:ratio}
r^3 \Big ( \frac{v(r)}{Q(r)} \Big )' = \frac{1}{Q(r)^2} \int_0^r s^3 Q(s) W(s) \, ds.
\end{equation}
Since $Q(r) > 0$ for all $r \geq 0$ and $v(r) > 0$ at least initially, by continuity, we see from (\ref{eq:W}) that $W(r) > 0$ for $r >0$ at least initially. Therefore, by (\ref{eq:ratio}), we have that $(v/Q)' > 0$ for $r > 0$ at least initially, and thus
\begin{equation}
v(r) > Q(r), \quad \mbox{for $r \geq 0$ at least initially.}
\end{equation} 
Again, by a similar argument as in the proof of Theorem \ref{th:Qunique}, we conclude from (\ref{eq:ratio}) that $v(r)$ and $Q(r)$ never intersect. Hence,
\begin{equation}
v(r) > Q(r), \quad \mbox{for $r \geq 0$.}
\end{equation}
Furthermore, this fact combined with equation (\ref{eq:ratio}) yields the lower bound
\begin{equation} \label{eq:ratio2}
r^3 \Big ( \frac{v(r)}{Q(r)} \Big )'  \geq \frac{2}{Q(r)^2} \int_0^r s^3 Q(s)^2 \int_0^s K(s,t) Q(t)^2 \, dt \, ds , 
\end{equation}
for $r \geq 0$. 

Next, we note that $Q(r)$ is the (unique) ground state eigenfunction for the Schr\"odinger operator $H = -\DD + V$, so that
\begin{equation}
H Q = -\omega Q, \quad \mbox{where $-\omega = \inf \sigma(H) < 0$}.
\end{equation}
(Note that in principle $\omega \neq 1$, because we rescaled the ground state $Q(r)$.) By well-known results on ground states for Schr\"odinger operators (see, e.\,g., \cite{Carmona+Simon1981}), we deduce that, for any $\varepsilon > 0$, there are constants $A_\varepsilon > 0$ and $B_\varepsilon > 0$ such that
\begin{equation} \label{ineq:Carmona}
A_\varepsilon e^{-(\omega + \varepsilon)r} \leq Q(r) \leq B_\varepsilon e^{-(\omega - \varepsilon) r}, \quad \mbox{for $r \geq 0$.}
\end{equation}
Now choose $0 < \varepsilon < \omega$. By inserting the bounds (\ref{ineq:Carmona}) into (\ref{eq:ratio2}), we deduce
\begin{equation} \label{eq:ratio3}
r^3 \Big ( \frac{v(r)}{Q(r)} \Big )'  \geq C e^{(2 \omega - 2 \varepsilon) r} \int_0^r s^3 e^{-(2 \omega + 2 \varepsilon) s} \int_0^s K(s,t) e^{-(2\omega +2\varepsilon) t} \, dt \, ds , \end{equation}
for all $r \geq 0$ and some constant $C > 0$ (whose dependence on $\varepsilon$ we drop henceforth). Since the nonnegative integral on the right side converges as $r \to \infty$ to some value $2A > 0$, say, there is some $R > 0$ such that
\[
\int_0^r s^3 e^{-(2 \omega + 2 \omega) s} \int_0^s K(s,t) e^{-(2 \omega + 2 \varepsilon)  t}  \, dt \, ds \geq A , \quad \mbox{for $r \geq R$.}
\]
Inserting this bound into (\ref{eq:ratio2}) yields 
\begin{equation}
r^3 \left ( \frac{v(r)}{Q(r)} \right ) '  \geq C e^{(2 \omega  - 2 \varepsilon) r}, \quad \mbox{for $r \geq R$},
\end{equation}
with some constants $C > 0$. Integrating this lower bound and using (\ref{ineq:Carmona}) once more, we  conclude that
\begin{equation}
 v(r) \geq C \frac{e^{(\omega - 3 \varepsilon) r}}{r^3} , \quad \mbox{for $r \geq R$},
\end{equation}
for some $ C>0$ and $R>0$ sufficiently large, whence Lemma \ref{lem:grow} follows by choosing $\varepsilon > 0$ sufficienlty small. \end{proof}

We now return to the proof of Theorem \ref{th:ker}. From equation (\ref{eq:LL}) and $L_+ \xi = 0$ we deduce that $\xi$ solves the inhomogeneous problem
\begin{equation}
\mathcal{L}_+ \xi =  \sigma Q, \quad \mbox{with} \quad \sigma = 2 \int_{\RR^4} \frac{Q \xi}{|x|^2} \, dx.
\end{equation}
Hence, we have that
\begin{equation} \label{eq:xii}
\xi = v + w,
\end{equation}
where $v$ solves the homogeneous equation $\mathcal{L}_+ v = 0$, and where $w$ is a particular solution to the inhomogeneous problem $\mathcal{L}_+ w = \sigma Q$. To construct $w$, we notice that  a calculation yields
\begin{equation}
L_+ Q_1 = -2Q, \quad \mbox{where $Q_1 = 2Q + r \partial_r Q$}.
\end{equation}
Thus, by using equation (\ref{eq:LL}) with $Q_1$ in place of $\xi$, we find
\begin{equation}
\mathcal{L}_+ Q_1 = -2 (1 - \rho) Q, \quad \mbox{with} \quad \rho = \int_{\RR^4} \frac{Q Q_1}{|x|^2} \,dx .
\end{equation}
Since $Q_1 \in L^2_{\mathrm{rad}}$ and $Q_1(0) \neq 0$ with $Q_1'(0)=0$, we conclude from Lemma \ref{lem:grow} that $\mathcal{L}_+ Q_1 \neq 0$ and hence $\rho \neq 1$ must hold. Therefore, we have found a particular solution $w$ given by
\begin{equation} \label{eq:w}
w = \frac{\sigma}{1- \rho} Q_1 \in L^2_{\mathrm{rad}}(\RR^4).
\end{equation}

Next, suppose that $\sigma = 0$ so that $w =0$. Then equation (\ref{eq:xii}) implies that $\xi = v$ and, by the smoothness of $\xi \not \equiv 0$, we deduce that $\xi'(0) = v'(0) = 0$. By Lemma \ref{lem:grow}, we deduce that $\xi = v \not \in L^2_{\mathrm{rad}}$, which is a contradiction.

Hence $\sigma \neq 0$ holds in (\ref{eq:w}) and therefore $w \not \equiv 0$. This, in turn, implies that $v \not \equiv 0$ must hold in (\ref{eq:xii}); for otherwise we would have $\xi \sim Q_1$ contradicting $L_+ \xi = 0$ and $L_+ Q_1 =-2Q$. Thus we can invoke Lemma \ref{lem:grow} again to find that $v \not \in L^2_{\mathrm{rad}}$. But this contradicts (\ref{eq:xii}) and that $\xi$ and $w$ are both in $L^2_{\mathrm{rad}}$. This completes the proof of Theorem \ref{th:ker}. \end{proof}

\subsection{Spectral Properties of $\Hinf$} \label{subsec:Hinf}

In this brief subsection, we prove Lemma \ref{lem:H_inf}, by using the nondegeneracy result about $L_+$ (i.\,e.~Theorem \ref{th:ker}) combined with standard arguments for NLS with local nonlinearities.

\begin{proof}[Proof of Lemma \ref{lem:H_inf}]
To see that $\sigma_{\rm ess}(\Hinf) = (-\infty, -1] \cup [1, \infty)$ holds, one argues (using Weyl's lemma and the fact that the local terms vanish at infinity) in the same way as for linearized operator for ground states of NLS with local nonlinearities; see, e.\,g., \cite{Erdogan+Schlag2006,Hundertmark+Lee2007}. Note that the nonlocal term
\begin{equation}
2 \Qinf \big ( |x|^{-2} \ast (\Qinf \cdot ) \big )
\end{equation}
is easily seen to be a Hilbert-Schmidt operator and hence compact. In particular, this operator does not affect the essential spectrum. This shows Part (i).

Thanks to the fact $\mathrm{ker} \, L_+ = \{ 0 \}$ in the radial sector by Theorem \ref{th:ker}, we conclude that the generalized null-space of $\Hinf$ (in the radial sector) is given by
\begin{equation}
\mathcal{N} = \mathrm{span} \, \{ \phi_1, \phi_2, \phi_3, \phi_4 \},
\end{equation}
with $\{ \phi_i \}_{i=1}^4$ as in Lemma \ref{lem:H_inf}, by an immediate adaptation of \cite{Weinstein1985} which proves this fact for NLS with $L^2$-critical nonlinearities. Next, let $\rho \in L^2_{\rm rad}(\RR^4)$ satisfy
\begin{equation}
L_+ \rho = - |x|^2 \Qinf .
\end{equation}
Note that $\rho$ exists and is unique, since $L_+$ is invertible in the radial sector. Furthermore, we see that $e^{+\delta |\cdot|} \rho \in L^\infty$ for some $\delta > 0$, by adapting, e.\,g., the proof of \cite[Lemma 4.9]{FJL2007}. (Note that $L_+$ is a nonlocal operator, so we cannot directly use standard arguments to deduce exponential decay). Finally, we mention that Part (iii) is a well-known fact for linearized operators in the context of local NLS, and the proof carries over without modification. This completes the proof of Lemma \ref{lem:H_inf}. \end{proof}

\section{Construction and Properties of $\Qt$} \label{sec:Qt}

In the section, we construct radial and real-valued solutions $\Qt \in H^1(\RR^4)$ of
\begin{equation} \label{eq:pertQ}
-\DD \Qt + \Qt - \Big ( \frac{ \phi(t^{-k} | \cdot |^k)}{| \cdot |^2} \ast |\Qt|^2 \Big ) \Qt = 0,
\end{equation}
for $t \geq T_0$, where $T_0 > 0$ is sufficiently large and $k >0$ denotes some fixed number. Here $\phi(r)$ is supposed to satisfy the assumptions stated in Theorem \ref{th:main}. Note that, if we formally set $t=\infty$ in \eqref{eq:pertQ}, we obtain that $\Qinf$ should satisfy
\begin{equation} \label{eq:Qclear}
-\DD \Qinf + \Qinf - \big ( \frac{1}{| x |^2} \ast |\Qinf|^2 \big ) \Qinf = 0 ,
\end{equation}
where $\Qinf = Q \in H^1(\RR^4)$ is the unique ground state given by Theorem \ref{th:Qunique}.

\subsection{Construction of $\Qt$}

We now construct solutions $\Qt$ of (\ref{eq:pertQ}) by an implicit-function-type argument such that $\lim_{t \rightarrow \infty} \Qt = \Qinf$ in $H^1$. This construction is essentially based upon the nondegeneracy result stated in Theorem \ref{th:ker} above. Since $\Qt$ are radial and real-valued functions in $H^1(\RR^4)
$, it is convenient to introduce the following subspace
\begin{equation}
\Honer =\big  \{ f \in H^1(\RR^4) : \mbox{$f$ is radial and real-valued} \big \} . 
\end{equation}
We have the following result.

\begin{theorem} \label{th:Qt}
Let $k >0$ be fixed and suppose that $\phi(r)$ satisfies the assumptions stated in Theorem \ref{th:main}. Furthermore, let $\Qinf \in \Honer$ denote the unique ground state solving (\ref{eq:Qclear}) given by Theorem \ref{th:Qunique}. Then there exists $T_0 = T_0(k) > 0$ sufficiently large such that the following properties hold.
\begin{enumerate}
\item[(i)] There exists a unique $C^0$-map 
\[
t \mapsto \Qt, \quad [T_0, \infty) \to \Honer ,
\]
such that $\Qt \in \Honer$ solves (\ref{eq:pertQ}) and $\lim_{t \rightarrow \infty} \| \Qt - \Qinf \|_{H^1_x} = 0$.
\item[(ii)] The map $t\mapsto \Qt$ is $C^1$ and satisfies
\[
\big \| \partial_t \Qt \big \|_{H^1_x} \lesssim t^{-1}  , \quad \mbox{for $t \geq T_0$}.
\]
\end{enumerate} 
\end{theorem}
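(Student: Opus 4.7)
The plan is to prove Theorem~\ref{th:Qt} by an implicit-function-type argument, treating $\Qt$ as a small perturbation of $\Qinf$ for $t$ large. First I would reparametrize by $s := t^{-k}$, so that $t \to \infty$ becomes $s \to 0^+$, and introduce the map
\[
G : [0, s_0) \times \Honer \to H^{-1}_{\mathrm{r}}(\RR^4), \qquad G(s, Q) := -\DD Q + Q - \Big( \frac{\phi(s |\cdot|^k)}{|\cdot|^2} \ast |Q|^2 \Big) Q.
\]
Since $\phi(0)=1$, the identity $G(0, \Qinf) = 0$ is exactly equation~\eqref{eq:Qclear}, and a direct linearization gives $D_Q G(0, \Qinf) = L_+$ acting on the radial sector. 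By Theorem~\ref{th:ker} combined with $\sigma_{\mathrm{ess}}(L_+) \subset [1,\infty)$ (which follows from Weyl's theorem and compactness of the nonlocal potential term, paralleling Lemma~\ref{lem:H_inf}(i)), the operator $L_+$ is a bounded invertible map from $\Honer$ onto $H^{-1}_{\mathrm{r}}(\RR^4)$.

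Next I would check that $G$ is jointly $C^1$ in a neighborhood of $(0, \Qinf)$. The $Q$-dependence is standard, with the convolution terms controlled in $H^1$ by Hardy--Littlewood--Sobolev, Hardy's inequality, and Sobolev embedding. For the $s$-dependence, the hypothesis $|\phi'(r)| \leq C/\langle r \rangle$ yields the pointwise bound
\[
\Big| \frac{\phi(s|x|^k) - 1}{|x|^2} \Big| \leq C \min\bigl( s|x|^{k-2}, |x|^{-2} \bigr),
\]
which, combined with the exponential decay of $\Qinf$, shows the kernel perturbation acts continuously (and differentiably in $s$) from $H^1$ to $H^{-1}$ uniformly on bounded $H^1$-neighborhoods of $\Qinf$. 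The implicit function theorem then produces a unique $C^1$ curve $s \mapsto Q(s) \in \Honer$ on $[0, s_0)$ with $Q(0) = \Qinf$ and $G(s, Q(s)) = 0$, and setting $\Qt := Q(t^{-k})$ for $t \geq T_0 := s_0^{-1/k}$ yields part~(i), with $\|\Qt - \Qinf\|_{H^1} \to 0$ built into the construction.

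For part~(ii), differentiating $G(t^{-k}, \Qt) = 0$ in $t$ and using that $\Qt$ is real yields
\[
L_+^{(t)} \partial_t \Qt = \bigl( \partial_t K_{t^{-k}} \ast |\Qt|^2 \bigr) \Qt, \qquad \partial_t K_{t^{-k}}(x) = -k\, t^{-k-1} |x|^{k-2} \phi'(t^{-k}|x|^k),
\]
where $L_+^{(t)}$ is the operator introduced in Section~\ref{sec:proof:main}. Using $\langle r \rangle |\phi'(r)| \leq C$ together with the uniform exponential decay of $\Qt$ (inherited from $\Qinf$ via Carmona--Simon type estimates once $\Qt$ is close to $\Qinf$ in $H^1$) bounds the right-hand side in $H^1$ by $C t^{-k-1}$ (hence certainly by $C t^{-1}$). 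Since $\Qt \to \Qinf$ in $H^1$ by part~(i), a standard perturbation argument shows $L_+^{(t)} \to L_+$ in operator norm and so $L_+^{(t)}$ is uniformly invertible from $H^1_{\mathrm{r}}$ to $H^{-1}_{\mathrm{r}}$ for $t \geq T_0$, whence the claimed derivative bound.

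The main obstacle is controlling the kernel perturbation $K_s - K_0$, which is both nonlocal and singular at the origin: naive $L^p$-based convolution inequalities fail, and one must combine the gradient bound on $\phi$ with the exponential localization of $\Qinf$ to get both the qualitative operator continuity needed for the implicit function theorem and the quantitative decay rate needed for the $\partial_t$-estimate. A secondary technical point is that uniform invertibility of $L_+^{(t)}$ relies on the $H^1$-continuity of $\Qt$ itself, but this is resolved by bootstrapping from the continuity output of the implicit function theorem in part~(i).
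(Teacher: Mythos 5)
Your proposal is correct and follows essentially the same route as the paper: an implicit-function-theorem argument around $(\Qinf,0)$ in a rescaled time variable, with invertibility of the linearization supplied by the nondegeneracy of $L_+$ (Theorem \ref{th:ker}), and part (ii) obtained by differentiating the fixed-point relation and inverting the linearized operator. The only caveat is that your stronger $t^{-k-1}$ bound in (ii) leans on uniform-in-$t$ exponential decay of $\Qt$, which does not follow from $H^1$-closeness alone and which the paper establishes separately via a Slaggie--Wichmann argument (Lemmas \ref{lem:expdecay} and \ref{lem:tdecay}); for the $t^{-1}$ bound actually asserted in the theorem, the hypothesis $\langle r\rangle|\phi'(r)|\lesssim 1$ together with Hardy's inequality suffices, as you note parenthetically.
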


\begin{proof}
For $\tau \geq 0$ and $u \in \Honer$, we define the following map
\begin{equation}
G(u,\tau) = u + (-\DD + 1)^{-1} g(u,\tau),
\end{equation}
where we set
\begin{equation}
g(u,\tau) = - \big ( \frac{\phi(\tau^k |\cdot|^k)}{|\cdot|^2} \ast |u|^2 \big ) u. 
\end{equation}
Clearly, any $(Q_\tau,\tau)$ such that $G(Q_\tau,\tau) = 0$ is a solution of equation (\ref{eq:pertQ}) with $t = 1/\tau$, provided that $\tau > 0$. Also, the limit behavior of $Q_\tau$ as $\tau \to 0$ obviously corresponds to the limit $t \to \infty$.

Using an implicit function theorem and Theorem \ref{th:ker}, we shall first construct, for $\tau_0 > 0$ sufficiently small, a unique $C^0$-map 
\begin{equation} \label{eq:phimap}
\tau \mapsto \varphi(\tau), \quad [0, \tau_0] \to \Honer,
\end{equation}
such that $G(\varphi(\tau), \tau) = 0$ for all $\tau \in [0,\tau_0]$ and $\varphi(0) = \Qinf$. To this end, let $\tau_0 > 0$ be a constant chosen later. By Hardy's inequality, we find that $g : H^1_{\mathrm{rad}} \times [0,\tau_0] \to H^1_{\mathrm{rad}}$ holds. Also, it is easy to see that $\partial_u g(u,\tau)$ exists and is given by
\begin{equation} \label{eq:gu}
\partial_u g (u,\tau) = - \big ( \frac{\phi(\tau^k |\cdot |^k)}{| \cdot |^2} \ast |u|^2 \big ) - 2 u \big (  \frac{\phi(\tau^k |\cdot |^k)}{| \cdot |^2} \ast (u \cdot ) \big  ) .
\end{equation}
Next, we claim that 
\begin{equation}
\partial_u G(u,\tau) = 1 + (-\DD + 1)^{-1} \partial_u g(u,\tau)
\end{equation}
depends continuously on $(u,\tau)$ in the $H^1$-topology, i.\,e.,
\begin{equation}
\big \| ( \partial_u G(u_1,\tau_1)  - \partial_u G(u_2, \tau_2) )  f \big \|_{H^1} \to 0 \quad \mbox{as} \quad (u_1, \tau_1) \to (u_2, \tau_2),
\end{equation}
for all $f \in \Honer$. To show this, we first note
\begin{align}
\big \| \{ \partial_u G(u_1, \tau_1) - \partial_u G(u_2, \tau_2) \} f \big \|_{H^1}& \lesssim \big \| \frac{\sqrt{-\Delta + 1}}{-\Delta + 1} \{ \partial g(u_1, \tau_1 ) - \partial_u g(u_2, \tau_2) \} f \big \|_{L^2} \nonumber \\
& \lesssim \big \| \{ \partial_u g(u_1,\tau_1) - \partial_u g(u_2, \tau_2) \} f \big \|_{L^2}  \nonumber \\
& \lesssim \big \| \{ \partial_u g(u_1, \tau_1) - \partial_u g(u_2, \tau_1) \} f \big \|_{L^2} \nonumber \\
& \quad + \big \| \{ \partial_u g(u_2,\tau_1) - \partial_u g(u_2, \tau_2) \} f \big \|_{L^2}  \label{eq:Gpart}
\end{align}
Here the first term on the right side is easily estimated as follows (using H\"older's and Hardy's inequality) together with the fact that $|\phi| \lesssim 1$ holds. For example, the contribution from the first (local) term in (\ref{eq:gu}) is bounded by
\begin{align*}
\| \big ( \frac{\phi(\tau^k |\cdot|^k)}{|\cdot|^2} \ast (|u_1|^2 - |u_2|^2) \big ) f \big \|_{L^2} & \lesssim \| \big ( \frac{\phi(\tau^k |\cdot|^k)}{|\cdot|^2} \ast (|u_1|^2 - |u_2|^2) \big ) \|_{L^\infty} \|f \|_{L^2} \\
& \lesssim \sup_{y \in \RR^d} \Big | \int_{\RR^d} \frac{|u_1(x) + u_2(x)|^{2}}{|x-y|^2} \Big |^{1/2}  \\
& \quad \cdot \sup_{y \in \RR^d} \Big | \int_{\RR^d} \frac{|u_1(x) - u_2(x)|^{2}}{|x-y|^2} \Big |^{1/2} \cdot \| f \|_{L^2} \\
& \lesssim  ( \| u_1 \|_{H^1} + \| u_2 \|_{H^1} ) \| u_1 - u_2 \|_{H^1} \| f \|_{L^2} .
\end{align*}
Likewise, we can estimate the part arising for the second (nonlocal) term in $\partial_u g$. In summary, we have 
\begin{equation}
\big \| \{ \partial_u g(u_1, \tau_1) - \partial_u g(u_2, \tau_1) \} f \big \|_{L^2} \lesssim ( \| u_1 \|_{H^1} + \| u_2 \|_{H^1} ) \| u_1 - u_2 \|_{H^1} \| f \|_{L^2} .
\end{equation}

To deal with the second term on the right side of (\ref{eq:Gpart}), we just apply the dominated convergence to conclude that
\begin{equation}
\big \| \{ \partial_u g(u_2, \tau_1) - \partial_u g(u_2, \tau_2) \} f \big \|_{L^2} \to 0 \quad \mbox{as} \quad \tau_1 \to \tau_2 .
\end{equation}
This proves the claimed continuity of $\partial_u G (u,\tau)$. Next, we note that $(\Qinf, 0)$ satisfies
\begin{equation}
G(\Qinf, 0) = 0 .
\end{equation}
Furthermore, by Theorem \ref{th:ker}, we have that $L_+ = \{ 0 \}$ when acting on radial functions. This that the compact operator $(-\Delta + 1)^{-1} g_u(\Qinf, 0)$ does not have $-1$ in its spectrum. Hence $\partial_u G(\Qinf,0)$ is invertible on $\Honer$. Therefore, by an implicit function argument, we deduce the existence of the unique $C^0$-map (\ref{eq:phimap}), provided that $\tau_0 > 0$ is chosen sufficiently small. By defining $\Qt = \varphi(1/t)$ for $t \in [T_0, \infty)$ with $T_0 = 1/\tau_0$, we complete the proof of Part (i).

To show Part (ii), we observe that the derivative $\partial_\tau G(u, \tau)$ exists and is continuous, provided that $0 < \tau \leq \tau_0$. Thus, 
\begin{equation}
\partial_\tau \varphi(\tau) = - (\partial_u G(\varphi(\tau), \tau)^{-1} G_\tau(\varphi(\tau), \tau)
\end{equation}
exists and is continuous for $0 < \tau \leq \tau_0$. By calculation, we find that $ \| \partial_\tau \varphi \|_{H^1_x} \lesssim O(\tau^{-1})$ for $\tau \in (0, \tau_0]$. Since $\Qt = \varphi(1/t)$, by definition, the chain rule yields
\begin{equation}
\big \| \partial_t \Qt \big \|_{H^1_x} \lesssim O(t^{-1}), \quad \mbox{for $t \in [T_0, \infty)$.}
\end{equation}
which completes the proof of Theorem \ref{th:Qt}. \end{proof}

\subsection{Spectral Properties of $\Ht$} \label{subsec:Ht}

The proof of Lemma \ref{lem:H_t} is based on Lemma \ref{lem:H_inf} and the convergence $\Qt \to \Qinf$ in $H^1$ as $t \to \infty$.

\begin{proof}[Proof of Lemma \ref{lem:H_t}] 
The claim that $\sigma_{\mathrm{ess}}(\Ht) = (-\infty, -1] \cup [1, \infty)$ follows from the same argument as for Lemma \ref{lem:H_inf}. 

It remains to prove Part (ii). To this end, we define the Riesz projection
\begin{equation}
\Prinf = \frac{1}{2 \pi i} \oint_{|z| = c} (z-\Hinf)^{-1} \, dz,
\end{equation}
where $c > 0$ is chosen sufficiently small such that $0$ is the only eigenvalue of $\Hinf$ inside the disc $|z| \leq c$. Then $\Prinf$ projects onto the generalized null-space $\mathcal{N}$ of $\Hinf$. Next, we note that it is easy to see that the difference
\begin{equation}
\delta \Ht := \Hinf - \Ht
\end{equation}
is a bounded operator on $L^2_{\mathrm{rad}}(\RR^4; \mathbb{C}^2)$ with operator norm $\| \delta \Ht \| \to 0$ as $t \to \infty$, by using that $\Qt \to \Qinf$ in $H^1$ as $t \to \infty$. Therefore, by choosing $c > 0$ possibly smaller and $T_0 > 0$ sufficiently large, we readily verify that the Riesz projections
\begin{equation}
\Prt = \frac{1}{2 \pi i} \oint_{|z| = c} (z-\Ht)^{-1} \, dz 
\end{equation}
exists for all $t \in [T_0, \infty)$. Furthermore, we have $\| \Prt -  \Prinf \|_{L^2_x \to L^2_x} \to 0$ as $t \to \infty$. Furthermore, it is straightforward to verify that indeed we have
\begin{equation}
\| \Prt - \Prinf \|_{H^1_x \to H^1_x} \quad \mbox{as} \quad t \to \infty,
\end{equation}
using that $\| \delta \Ht \|_{H^1_x \to H^1_x} \to 0$ as $t \to \infty$.  The proof of Lemma \ref{lem:H_t} is complete. \end{proof}

\subsection{Improved Estimate for $\pr_t \Qt$}

We now show that needed stronger decay estimate
\begin{equation}
\pr_t \Qt = \mathcal{O}(t^{-k-1}), \quad \mbox{for $t \gg 1$}.
\end{equation} 
To this end, we first prove the following lemma about uniform exponential decay.
\begin{lemma} \label{lem:expdecay}
Let $\{ \Qt \}_{t \in [T_0, \infty)}$ be as in Theorem \ref{th:Qt}. Then there exist constants $\delta > 0$ and $C > 0$ such that
\[
| \Qt(x) | \leq C e^{- \delta |x|}, \quad \mbox{for all $x \in \RR^4$ and all $t \in [T_0, \infty)$}.
\]
\end{lemma}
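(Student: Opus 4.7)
\medskip

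\noindent\textbf{Plan.} The approach is an Agmon-type barrier argument for the equation $(-\Delta+1)\Qt = V^{(t)}\Qt$, where
\[
V^{(t)}(x) := \Big(\frac{\phi(t^{-k}|\cdot|^k)}{|\cdot|^2}\ast |\Qt|^2\Big)(x).
\]
I would first establish (a) $\|\Qt\|_{L^\infty}\leq M$ uniformly in $t\geq T_0$ and (b) $V^{(t)}(x)\to 0$ as $|x|\to\infty$ uniformly in $t\geq T_0$. Granting these, there is $R>0$ independent of $t$ with $V^{(t)}(x)\leq 1/2$ for $|x|\geq R$. The function $u:=|\Qt|^2\in W^{2,p}$ satisfies $-\Delta u = 2(V^{(t)}-1)u - 2|\nabla\Qt|^2$, whence $-\Delta u + u \leq 0$ on $\{|x|\geq R\}$. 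Comparing with the exponential barrier $\psi(x)=M^2 e^{-\delta(|x|-R)}$ for some $\delta\in(0,1)$, an elementary radial computation gives $(-\Delta+1)\psi = \big(1-\delta^2 + \tfrac{3\delta}{|x|}\big)\psi>0$, so $v=u-\psi$ satisfies $(-\Delta+1)v\leq 0$ on $\{|x|>R\}$, $v|_{|x|=R}\leq 0$, and $v\to 0$ at infinity. The weak maximum principle then forces $v\leq 0$, i.e.\ $|\Qt(x)|\leq M e^{-(\delta/2)(|x|-R)}$ for $|x|\geq R$, and combining with the uniform $L^\infty$ bound on $\{|x|\leq R\}$ yields the conclusion.

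\medskip

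The uniform bounds in (a)--(b) come from the fact that Theorem \ref{th:Qt}(i) gives $\Qt\to\Qinf$ in $H^1$, so $\{\Qt\}_{t\geq T_0}$ is uniformly bounded in $H^1$ and (by $L^2$-convergence) uniformly tight in $L^2$. Together with $|\phi|\lesssim 1$ and Hardy's inequality, this gives $\|V^{(t)}\|_{L^\infty}\lesssim \|\nabla\Qt\|_{L^2}^2\lesssim 1$. A standard elliptic bootstrap then applies: $V^{(t)}\Qt\in L^4$ uniformly (since $V^{(t)}\in L^\infty$ and $\Qt\in L^4$ by Sobolev in $\RR^4$), so $\Qt$ is uniformly bounded in $W^{2,4}(\RR^4)\hookrightarrow C^{0,\alpha}$, giving both the uniform $L^\infty$-bound and the Hölder continuity needed to ensure $\Qt(x)\to 0$ as $|x|\to\infty$ (a uniformly continuous $L^2$ function necessarily vanishes at infinity).

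\medskip

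The main obstacle is establishing the uniform decay of $V^{(t)}$ at infinity; this is the only step that uses the full $H^1$-convergence $\Qt\to\Qinf$ rather than just uniform boundedness. I would handle it by splitting, for $|x|\geq 2$,
\[
V^{(t)}(x) \leq \int_{|y|\leq |x|/2}\frac{|\Qt(y)|^2}{|x-y|^2}\,dy + \int_{|y|>|x|/2}\frac{|\Qt(y)|^2}{|x-y|^2}\,dy.
\]
The first piece is bounded by $4\|\Qt\|_{L^2}^2/|x|^2\lesssim |x|^{-2}$. For the second piece, introduce a smooth cutoff $\eta_{|x|/2}$ supported in $\{|y|>|x|/4\}$, equal to $1$ on $\{|y|>|x|/2\}$, and apply Hardy to $\Qt\eta_{|x|/2}$:
\[
\int_{|y|>|x|/2}\frac{|\Qt|^2}{|x-y|^2}\,dy \lesssim \|\nabla\Qt\|_{L^2(|y|>|x|/4)}^2 + |x|^{-2}\|\Qt\|_{L^2}^2,
\]
which tends to $0$ uniformly in $t\geq T_0$ as $|x|\to\infty$, by the uniform tightness transferred from the $H^1$-convergent family $\{\Qt\}$. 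This yields (b) and completes the argument.
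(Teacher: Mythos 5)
Your proof is correct, but it takes a genuinely different route from the paper's. The paper uses the Slaggie--Wichmann method: it rewrites the eigenvalue equation as the integral identity $\Qt(x)=\int G(x-y)\,\Vt(y)\,\Qt(y)\,dy$ with $G$ the kernel of $(-\DD+1)^{-1}$, introduces the auxiliary functions $h^{(t)}(x)=\int e^{\delta|x-y|}|G(x-y)||\Vt(y)|\,dy$ and $m^{(t)}(x)=\sup_y |\Qt(y)|e^{-\delta|x-y|}$, shows $h^{(t)}(x)<9/10$ for $|x|>R$ uniformly in $t$, and then runs a self-improvement argument forcing the supremum defining $m^{(t)}$ to be attained in the fixed ball $\{|y|\leq R\}$, which yields the uniform exponential bound. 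Your Agmon-type barrier argument applied to $u=|\Qt|^2$ replaces this integral-kernel bootstrap with a purely differential comparison and the weak maximum principle on the exterior domain; the computation $(-\DD+1)u\leq 0$ where $\Vt\leq 1/2$ and the radial barrier estimate are both correct. The two proofs rest on the same two nontrivial inputs: a $t$-uniform $L^\infty$ bound on $\Qt$ (obtained by essentially the same elliptic bootstrap in both) and $t$-uniform smallness of $\Vt$ far from the origin. For the latter, the paper derives the quantitative pointwise bound $|\Vt(x)|\lesssim (1+|x|^2)^{-1}$ via Newton's theorem, hence uses radiality; you instead get a rate-free uniform decay from precompactness of the family, which would survive in a nonradial setting. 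The one place where you should be more explicit is the tightness step: $H^1$-convergence of $\Qt$ to $\Qinf$ alone does not give a statement uniform over all $t\geq T_0$; what does is that $\{\Qt\}_{t\geq T_0}\cup\{\Qinf\}$ is the image of the compact interval $[0,\tau_0]$ under the continuous map $\tau\mapsto\varphi(\tau)$ of Theorem \ref{th:Qt}, hence compact in $H^1$, hence uniformly tight in $L^2$ together with its gradients. With that said, the argument closes.
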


\begin{proof} 
For each $t \in [T_0, \infty)$, we have that $\Qt$ is an eigenfunction with eigenvalue $-1$ for the Schr\"odinger operator $H_t$, i.\,e.,
\begin{equation} \label{eq:Slaggie1}
H_t Q_t = -Q_t,
\end{equation}
where
\begin{equation}
H_t = -\DD - \Vt, \quad \mbox{with $\Vt =  \big ( \frac{\phi(t^{-k} | \cdot |^k)}{| \cdot |^2} \ast |\Qt|^2 \big )$. }
\end{equation}
In what follows, we adapt the Slaggie-Wichmann method (see, e.\,g., \cite{Hislop2000})  to prove pointwise exponential decay for $\Qt$,  where we have to make sure that all the constants can be chosen uniform in $t$. Therefore, we need to detail the proof as follows.

First, we rewrite (\ref{eq:Slaggie1}) as
\begin{equation} \label{eq:Slaggie2}
\Qt(x) = \int_{\RR^4} G(x-y) \Vt(y) \Qt(y) \, dy,
\end{equation} 
where $G(x-y)$ is the kernel of the resolvent $(-\DD + 1)^{-1}$. Next, let $0 < \delta < 1$ be a fixed number and define the functions
\begin{equation} \label{def:ht}
h^{(t)}(x) = \int_{\RR^4} e^{\delta |x-y|} |G(x-y)| |\Vt(y)| \,dy .
\end{equation}
By well-known estimates for the resolvent kernel and our choice that $\delta < 1$, we see that $e^{+\delta |z|} G(z) \in L^1(\RR^4) + L^p(\RR^4)$ for any $p \geq 1$. Since $\Vt \in L^{2+}(\RR^4) \cap L^\infty(\RR^4)$, one easily checks that $h^{(t)} \in C^0(\RR^4)$ and $\lim_{|x| \to \infty} h^{(t)}(x) = 0$.

Next, we define the functions
\begin{equation}
m^{(t)}(x) = \sup_{y \in \RR^4} |\Qt(y)| e^{-\delta |x-y|},
\end{equation}
which, in view of (\ref{eq:Slaggie2}), leads to the inequality
\begin{equation} \label{ineq:Slaggie3}
|\Qt(x)| \leq h^{(t)} (x) m^{(t)} (x) .
\end{equation}
Now let $R > 0$ be a constant specified below. Clearly, we have that
\begin{equation} \label{eq:mmax}
m^{(t)} (x) = \max \Big \{ \sup_{|y| \leq R} |\Qt(y)| e^{-\delta |x-y|}, \sup_{|y| > R} |\Qt(y)| e^{-\delta |x-y|} \Big \} .
\end{equation}
Our goal is now to show that if $R > 0$ is sufficiently large, then the maximum in (\ref{eq:mmax}) is always given by the $\sup_{|y| \leq R}$-term for all $t \geq T_0$. The claimed uniform exponential decay for $\Qt$ then follows easily.

We now show that there is indeed such an $R >0$. To this end, we first claim that we can take $R > 0$ sufficiently large such that
\begin{equation} \label{ineq:ht}
h^{(t)}(x) < \frac{9}{10} , \quad \mbox{for $|x| > R$ and $t \geq T_0$} .
\end{equation}
Indeed, since $|\phi(x)| \lesssim 1$, we deduce $|\Vt(x)| \lesssim |x|^{-2}$, by using Newton's theorem and the fact that $\Qt$ are radial functions with $\| \Qt \|_{L^2_x} \lesssim1$. Furthermore, we have $|\Vt(x)|\lesssim 1$ by Hardy's inequality and $\| \Qt \|_{\dot{H}^1_x} \lesssim 1$. Therefore, we obtain the uniform pointwise bound
\begin{equation} \label{ineq:Vupper}
|\Vt(x)| \lesssim \frac{1}{1 + |x|^2} , \quad \mbox{for $t \geq T_0$} .
\end{equation}
Plugging this estimate into (\ref{def:ht}), we deduce the uniform bound
\begin{equation} \label{ineq:ht2}
h^{(t)}(x) \lesssim \int_{\RR^4} e^{\delta|x-y|} |G(x-y)| \frac{1}{1+|y|^2} \, dy, \quad \mbox{for $t \geq T_0$},
\end{equation}
whence (\ref{ineq:ht}) follows by taking $R >0$ sufficiently large. Furthermore, we note that equation (\ref{eq:pertQ}) combined with a bootstrap arguments shows that $\| \Qt \|_{H^1_x} \lesssim 1$ yields $\| \Qt \|_{H^s_x}\lesssim C(s) $ for any $s \geq 1$ and some constants $C(s)$. In particular, by Sobolev's embedding, we conclude the uniform bound
\begin{equation} \label{ineq:Qtupper}
 \| \Qt  \|_{L^\infty_x} \lesssim 1 , \quad \mbox{for $t \geq T_0$}.
\end{equation}

With the help of the uniform bound (\ref{ineq:ht}) and \eqref{ineq:Qtupper}, we now deduce the claimed uniform exponential along the lines of the Slaggie-Wichmann argument. First, we observe that
\begin{equation} \label{eq:Slaggie4}
e^{-\delta |x-y|} = \sup_{z \in \RR^4} e^{-\delta |x-y|} e^{-\delta |y-z|} ,
\end{equation}
which directly follows from the triangle inequality and the definition of the supremum.  Thus, we have
\begin{equation}
m^{(t)}(x) = \sup_{z \in \RR^4} m^{(t)}(z) e^{-\delta |x-z|} .
\end{equation}
Now assume that $R > 0$ such that (\ref{ineq:ht}) holds. Then, by (\ref{ineq:Slaggie3}), we have $|\Qt(x)| < m^{(t)}(x)$ whenever $|x| > R$. This fact and (\ref{eq:Slaggie4}) imply
\begin{align}
\sup_{|y| > R} |\Qt(y)| e^{-\delta |x-y|} & < \sup_{|y| > R} m^{(t)}(y) e^{-\delta |x-y|}  \nonumber \\
& \leq \sup_{y \in \RR^4} m^{(t)}(y) e^{-\delta |x-y|} = m^{(t)}(x) . \nonumber  
\end{align}
Hence the $\sup_{|y| > R}$-term in (\ref{eq:mmax}) is strictly less than $m^{(t)}(x)$; and therefore
\begin{equation}
m^{(t)}(x) = \sup_{|y| \leq R} |\Qt(y)| e^{-\delta |x-y|} .
\end{equation}
Using the uniform bound (\ref{ineq:Qtupper}) and $\sup_{|y| \leq R} e^{\delta |y|} \lesssim 1$, we deduce that
\begin{equation}
m^{(t)}(x) \lesssim e^{-\delta |x|} , \quad \mbox{for $t \geq T_0$} .
\end{equation}
Going back to (\ref{ineq:Slaggie3}) and noting that (\ref{ineq:ht2}) also shows that $\| h^{(t)} \|_{L^\infty_x} \lesssim 1$ for $t \geq T_0$, we complete the proof of Lemma \ref{lem:expdecay}. \end{proof}

Finally, we are in the position to derive the following improved decay estimate for $\partial_t \Qt$ for $t \gg 1$.

\begin{lemma}\label{lem:tdecay}
For $\{ \Qt \}_{t \in [T_0,\infty)}$ as in Theorem \ref{th:Qt} and $T_0 > 0$ sufficiently large, we have
\[
\| \partial_t \Qt \|_{H^1_x} \lesssim t^{-k-1}, \quad \mbox{for $t \geq T_0$}.
\]
\end{lemma}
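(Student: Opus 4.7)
My plan is to upgrade the crude $\mathcal{O}(t^{-1})$ bound from Theorem~\ref{th:Qt}(ii) by differentiating the profile equation in $t$ and quarantining the \emph{explicit} $t$-dependence of the kernel $\phi(t^{-k}|\cdot|^k)$ from the \emph{implicit} $t$-dependence carried by $\Qt$ itself. Differentiating \eqref{eq:pertQ} in $t$ and regrouping every term linear in $\partial_t \Qt$ on the left-hand side yields the inhomogeneous linear equation
\[
 L_+^{(t)}\, \partial_t \Qt \;=\; F^{(t)},
\]
where $L_+^{(t)}$ is the operator defined just before Lemma~\ref{lem:opbound} and
\[
 F^{(t)} \;=\; -k\, t^{-k-1}\!\left(\frac{|\cdot|^k\,\phi'(t^{-k}|\cdot|^k)}{|\cdot|^2} \ast |\Qt|^2\right) \Qt.
\]
The structural point is that the entire factor $t^{-k-1}$ produced by differentiating $\phi(t^{-k}|\cdot|^k)$ is packaged inside $F^{(t)}$, while all contributions coming from $\partial_t\Qt$ are absorbed into $L_+^{(t)}\partial_t\Qt$.

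Next I would show $\|F^{(t)}\|_{L^2_x} \lesssim t^{-k-1}$. The quantity to control is
\[
 I(x) \;=\; \int \frac{|x-y|^k\,|\phi'(t^{-k}|x-y|^k)|}{|x-y|^2}\,|\Qt(y)|^2\,dy ,
\]
and I would split into the regions $|x-y| \leq t$ and $|x-y| > t$. On the near region, $|\phi'|$ is uniformly bounded, so the integrand is dominated by $C|x-y|^{k-2}\,|\Qt(y)|^2$; on the far region, the assumption $\langle r\rangle|\phi'(r)|\leq C$ gives an integrand bounded by $Ct^{k}|x-y|^{-2}|\Qt(y)|^2$, and this contribution is in fact exponentially small in $t$ since either $|x|$ or $|y|$ must exceed $t/2$, and $\Qt$ enjoys uniform exponential decay by Lemma~\ref{lem:expdecay}. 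In the near region the possible growth $|x-y|^{k-2}$ (relevant when $k>2$) is tamed by $|\Qt(y)|^2$ in $y$ and, after multiplying by the outer factor $\Qt(x)$, any residual polynomial growth in $x$ is killed by the uniform exponential decay of $\Qt$. A parallel computation on $\nabla F^{(t)}$ gives $\|F^{(t)}\|_{H^1_x} \lesssim t^{-k-1}$.

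Finally I would invert $L_+^{(t)}$ on the radial sector. Theorem~\ref{th:ker} states that $L_+ = L_+^{(\infty)}$ has trivial kernel on $L^2_{\mathrm{rad}}(\RR^4)$, and combined with the spectral gap in $\sigma_{\mathrm{ess}}(L_+)$ this promotes $L_+$ to an isomorphism $H^2_{\mathrm{rad}}(\RR^4) \to L^2_{\mathrm{rad}}(\RR^4)$. The convergence $\Qt \to \Qinf$ in $H^1$ then yields, by a standard perturbation argument analogous to the one behind Lemma~\ref{lem:H_t}, that $L_+^{(t)}$ is invertible on the radial sector with $\|(L_+^{(t)})^{-1}\|_{L^2_x \to H^2_x} \lesssim 1$ uniformly in $t \geq T_0$ for $T_0$ large enough. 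Since $\partial_t \Qt$ and $F^{(t)}$ are automatically radial, I conclude
\[
 \|\partial_t \Qt\|_{H^1_x} \;\lesssim\; \|F^{(t)}\|_{L^2_x} \;\lesssim\; t^{-k-1}.
\]

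The main obstacle is the weighted convolution estimate on $I(x)$: for large $k$ the kernel $|x-y|^{k-2}$ in the near region grows polynomially, and controlling this growth uses in an essential way the \emph{uniform-in-$t$} exponential decay of $\Qt$ produced by Lemma~\ref{lem:expdecay}. Everything else is bookkeeping once that decay is in hand.
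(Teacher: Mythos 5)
Your proposal is correct and follows essentially the same route as the paper: the paper differentiates the fixed-point identity $G(\Qt,t)=0$, whose $u$-derivative $\partial_u G(\Qt,t)=(-\DD+1)^{-1}L_+^{(t)}$ is uniformly invertible on the radial sector by Theorem~\ref{th:ker}, so that $\|\partial_t\Qt\|_{H^1}\lesssim\|\partial_t g(\Qt,t)\|_{L^2}$ with exactly your source term $F^{(t)}$. The only cosmetic difference is in estimating that source: the paper splits by cases $k\le 2$ (Young and Hardy) versus $k>2$ (uniform exponential decay of $\Qt$ from Lemma~\ref{lem:expdecay} plus $|x-y|^{k-2}\lesssim|x|^{k-2}+|y|^{k-2}$), rather than by near/far regions in $|x-y|$.
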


\begin{proof}
Note that $\Qt$ satisfies
\begin{equation} \label{eq:GQ}
G(\Qt, t) = 0, \quad \mbox{for all $t \in [T_0, \infty)$},
\end{equation}
where $G : \Honer \times [T_0, \infty) \to \Honer$ is given by
\begin{equation}
G(u,t) = u + (-\DD + 1)^{-1} g(u,t), \quad \mbox{with} \quad g(u,t) = - \big ( \frac{\phi(t^{-k} |\cdot|^k)}{ | \cdot |^2} \ast |u|^2 ) u .
\end{equation}
Differentiating equation (\ref{eq:GQ}) with respect to $t$ yields
\begin{equation}
\partial_t \Qt = - \big ( \partial_u G(\Qt, t) \big )^{-1} \partial_t G(\Qt,t)
\end{equation}
for all $t$ sufficiently large, while using the fact that $\partial_u G(\Qt,t)$ is invertible for $(\Qt,t)$ close $(\Qinf, \infty)$. Furthermore, by continuity, we have that 
\[ \| \partial_u G(\Qt,t)^{-1} \|_{H^1 \to H^1} \lesssim 1, \]
for $t$ sufficiently large. Using that $\partial_t G(\Qt,t) = (-\DD + 1)^{-1} \partial_t g(\Qt,t)$, we therefore get
\begin{align*}
\| \partial_t \Qt \|_{H^1} & \lesssim  \| (-\DD + 1)^{-1} \partial_t g(\Qt,t) \|_{H^1} \lesssim \| \partial_t g (\Qt,t) \|_{L^2}
\end{align*}
Next, we note
\begin{equation}
(\partial_t g(\Qt,t))(x) = -k t^{-k-1} \Big ( \int_{\RR^d} \frac{\phi'(t^{-k} |x-y|^k)}{|x-y|^2} |x-y|^{k} |\Qt(y)|^2 \, dy \big ) \Qt(x) .
\end{equation}
Let now $m=k-2$. If $-2 \leq m \leq 0$ then, by Young's and Hardy's inequality and using that $|\phi'(r)| \lesssim 1$, we obtain
\begin{equation}
\big \| \frac{\phi'(t^{-k} |\cdot|^k)}{ | \cdot |^{-m}} \ast |\Qt|^2 \big \|_{L^\infty} \lesssim 1 .
\end{equation}
Hence we conclude that
\begin{equation}
\label{estg}
\| \partial_t g_t(\Qt,t) \|_{L^2} \lesssim t^{-k-1} \| \Qt \|_{L^2} \lesssim t^{-k-1} .
\end{equation}
whenever $m=k-2 \in (-2,0]$, i.\,e., for $k \in (0,2]$. It remains to prove such a bound when $k > 2$. To this end, we use the uniform exponential decay stated in Lemma 1. For $m = k-2 > 0$, we recall the elementary inequality 
\[ |x-y|^m \leq \max \{ 1, 2^{m} \} ( |x|^m + |y|^m ) .\]
Next, by using Lemma 1, we deduce the pointwise bound
\begin{align*}
| \pr_t g(\Qt,t) |(x) & \lesssim t^{-k-1} \Big ( \int_{\RR^d} (|x|^m + |y|^m) e^{- 2 \delta |y|} \, dy \Big ) e^{- \delta |x|} \\
& \lesssim t^{-k-1} ( |x|^m + 1) e^{-\delta |x|} ,
\end{align*}
for some $\delta > 0$. This shows that $\| \pr_t  g (\Qt,t) \|_{L^2} \lesssim t^{-k-1}$ for $t$ large and if $k > 2$. This completes the proof of Lemma \ref{lem:tdecay}. \end{proof}

\bibliographystyle{amsalpha}

\bibliography{HartreeBib.bib}

\end{document}